\documentclass[11pt]{amsart}
\usepackage{amsmath, amsfonts, amssymb,amsthm}
\usepackage{euscript}
\usepackage[T1]{fontenc}
\usepackage{graphicx}
\usepackage{color}

\setlength{\hoffset}{-18pt}        
\setlength{\oddsidemargin}{0pt} 
\setlength{\evensidemargin}{9pt} 
\setlength{\marginparwidth}{54pt} 
\setlength{\textwidth}{481pt} 
\setlength{\voffset}{-18pt} 
\setlength{\marginparsep}{7pt} 
\setlength{\topmargin}{0pt} 
\setlength{\headheight}{13pt} 
\setlength{\headsep}{10pt} 
\setlength{\footskip}{27pt} 
\setlength{\textheight}{650pt} 

\newtheorem{thm}{Theorem}[section]
\newtheorem{lem}[thm]{Lemma}
\newtheorem{prop}[thm]{Proposition}
\newtheorem{cor}[thm]{Corollary}

\theoremstyle{definition}
\newtheorem{defn}[thm]{Definition}

\newtheorem{ex}[thm]{Example}
\newtheorem{rem}[thm]{Remark}

\DeclareMathOperator{\R}{\mathbb R}

\DeclareMathOperator{\N}{\mathbb N}
\DeclareMathOperator{\Ze}{\mathbb Z}
\DeclareMathOperator{\un}{\mathbf 1}
\DeclareMathOperator{\Z}{\mathcal Z}
\DeclareMathOperator{\I}{\mathcal I}

\DeclareMathOperator{\SR}{\mathcal R}
\DeclareMathOperator{\BR}{\mathcal B}
\DeclareMathOperator{\AR}{\mathcal AR}
\DeclareMathOperator{\Co}{\mathcal C}
\DeclareMathOperator{\SO}{\mathcal O}
\DeclareMathOperator{\Pol}{\mathcal P}
\DeclareMathOperator{\K}{\mathcal K}

\DeclareMathOperator{\codim}{codim}
\DeclareMathOperator{\poldepth}{pol-depth}

\DeclareMathOperator{\Bd}{Bd}

\DeclareMathOperator{\Sing}{Sing}

\DeclareMathOperator{\dom}{dom}

\DeclareMathOperator{\Sp}{Spec}
\DeclareMathOperator{\Witt}{W}
\DeclareMathOperator{\sign}{sign}
\DeclareMathOperator{\F}{F}
\DeclareMathOperator{\A}{A}
\DeclareMathOperator{\NA}{N}
\DeclareMathOperator{\AS}{AS}
\DeclareMathOperator{\w}{w}

\DeclareMathOperator{\length}{\ell}
\DeclareMathOperator{\pol}{indet}

\def \S {{\mathcal S}}

\begin{document}

\title[\tiny{Semi-algebraic geometry with
  rational continuous functions}]{Semi-algebraic geometry with rational continuous functions}

\date{\today}

\author[J.-P.~Monnier]{Jean-Philippe Monnier}

\address{Jean-Philippe Monnier\\
   LUNAM Universit\'e, LAREMA, Universit\'e d'Angers}
\email{jean-philippe.monnier@univ-angers.fr}


\maketitle

\begin{quote}\small
\textit{MSC 2000:} 14P99, 11E25, 26C15
\par\noindent
\textit{Keywords:} regular function, regulous function, rational
function, real algebraic variety, algebraically constructible
function, semi-algebraic set.
\end{quote}

\begin{abstract}
Let $X$ be a real algebraic subset of $\R^n$. We investigate on the
theory of algebraically constructible functions on $X$ and the
description of the semi-algebraic subsets of $X$ when we replace the
polynomial functions on $X$ by some rational continuous functions on
$X$.
\end{abstract}

\section{Introduction}

The concept of rational continuous maps between smooth real algebraic sets
was used the first time by W. Kucharz \cite{Ku} in order to
approximate continuous maps into spheres. In \cite{FHMM}, rational continuous
functions on smooth real algebraic sets are renamed by ``regulous
functions'' and their systematic study is performed.
A theory of vector bundles using these functions is done in
\cite{KuKu1}. They also appear in the recent theory of
piecewise-regular maps \cite{Ku2}.

J. Koll\'ar, K. Nowak \cite[Prop. 8]{KN}  \cite[Thm. 4.1]{FHMM} proved that
the restriction of a regulous function to a real algebraic subset is
still rational (this can also be deduced from \cite[Thm. 4.1]{FHMM}). It allows us to define the concept of regulous
function on a possibly singular real algebraic set $X$ by
restriction from the ambiant space. On $X$, we have two classes of
functions: rational continuous functions and regulous functions. In
cite \cite{KN} and \cite{KuKu2}, they give conditions for
a rational continuous function to be regulous. In the second section
of the present paper we present some preliminaries and we continue the study of differences between these
two classes of functions.

In classical real algebraic geometry, we copy what happens in the
complex case, and so we use as sheaf of functions on a real algebraic
variety the sheaf of regular functions. Unfortunately and contrary
to the complex case, some defects appear: classic Nullstellensatz and
theorems A and B of Cartan are no longer valid \cite{BCR}.
In \cite{FHMM}, G. Fichou, J. Huisman,
F. Mangolte, the author show that the use of the sheaf of regulous functions
instead of the
sheaf of regular functions corrects these defects. In this paper, and
from the third section, we
do the same thing but now in the semi-algebraic framework, we
introduce
a regulous semi-algebraic geometry i.e a semi-algebraic geometry with
regulous functions replacing polynomial or regular functions (remark that a regulous
function is semi-algebraic). The aim of \cite{FHMM} was to study the
zero sets of regulous functions, our purpose here is to investigate on their signs.

The third section deals with the theory of algebraically constructible functions, due to C. McCrory
and A. Parusi\'nski \cite{MP1}. This theory has been developed to study singular real algebraic
sets. We prove that the theory of
algebraically constructible functions can be done using only regulous
objects (functions, maps, sets). In particular,\\
\\
{\bf  Theorem A.}\\
Let $X\subset\R^n$ be a real algebraic set.
The sign of a regulous function on $X$ is a sum of
signs of polynomial functions on $X$. In particular, the algebraically
constructible functions on $X$ are exactly the sum of signs of
regulous functions on $X$.\\
\\

In the fourth and sixth sections, we investigate on the number of polynomial functions needed in
the representation of Theorem A. This is connected to the work of I. Bonnard in
\cite{Bo1} and \cite{Bo2}. We also study the case where the sign of a
regulous function is the sign of a polynomial function.\\
\\
{\bf  Theorem B.}\\
Let $X\subset\R^n$ be a real algebraic set and let $f$ be a regulous
function on $X$.
The sign of $f$ on $X$ coincides with the 
sign of a polynomial functions on $X$ if and only if the zero set of
$f$ is Zariski closed.\\

In the fifth section, we focus on the description of principal
semi-algebraic sets when we replace polynomial functions by regulous
functions. We compare regulous principal semi-algebraic
sets and polynomial principal semi-algebraic sets. This comparison is
useful to get Theorem B. In particular,\\
\\
{\bf  Theorem C.}\\
Let $X\subset\R^n$ be a real algebraic set. Let $f$ be a regulous
function on $X$ and we denote by $S$ the regulous principal open
semi-algebraic set $\{x\in X|\,f(x)>0\}$. Then $S$ is a principal open
semi-algebraic set, i.e the exists a polynomial function $p$ on $X$
such that $S=\{x\in X|\,p(x)>0\}$ if and only if
$S\cap\overline{\Bd(S)}^{Zar}=\emptyset$ where
$\overline{\Bd(S)}^{Zar}$ denote the Zariski closure of the euclidean
boundary of $S$.\\

In the last section, we characterize the signs of continuous semi-algebraic functions that coincide with the signs of regulous functions.
\\
\\
{\it Acknowledgements}

I want to thank G. Fichou and R. Quarez for stimulating conversations
concerning regulous functions. I thank also F. Mangolte and D. Naie
for interesting and helpful discussions. I thank a referee of an
earlier version of the paper for pointing out to us a mistake in this
earlier version.
In memory of J.-J. Risler.




\section{Regulous functions versus rational continuous functions}

\subsection{Regulous functions}

Let $n\in\N$ and $k\in\N\cup\{\infty\}$, we recall the definition of
$k$-regulous functions on $\R^n$ (see \cite{FHMM}).

\begin{defn}
\label{def1Rn}
We say that a function $f:\R^n\to \R$ is $k$-regulous on $\R^n$ if $f$ is $C^k$ on $\R^n$  and $f$ is a rational function on $\R^n$, i.e. there exists a
non-empty Zariski open subset $U\subseteq \R^n$ such that $f|_U$ is
regular.\par
A $0$-regulous function on $\R^n$ is simply called a regulous function
on $\R^n$.
\end{defn}

An equivalent definition of a
$k$-regulous function on $\R^n$ is given in \cite[Thm. 2.15]{FMQ}.

We denote by
$\SR^k(\R^n)$ the ring of $k$-regulous functions on $\R^n$. By Theorem 3.3
of \cite{FHMM} we know that $\SR^{\infty}(\R^n)$ coincides with the ring
$\SO (\R^n)$ of regular functions on $\R^n$.

For an
integer $k$, 
the $k$-regulous topology of $\R^n$ is defined to be the topology whose closed subsets are generated by the zero sets of regulous functions in $\SR^k (\R^n)$. 
Although the $k'$-regulous topology is a priori finer than the $k$-regulous topology when $k'<k$, it has been proved in \cite{FHMM} that in fact they are the same.
Hence, it is not necessary to specify the integer $k$ to define the
regulous topology on $\R^n$. By \cite[Thm. 6.4]{FHMM}, the regulous
topology on $\R^n$ is the algebraically constructible topology on
$\R^n$ (denoted by $\Co$-topology). On $\R^n$, the euclidean topology is finer than the
$\AR$-topology (the arc-symmetrical topology introduced by K. Kurdyka \cite{KK}) which is
finer than the regulous topology (see \cite{FHMM}) which is the $\Co$-topology which is finer than
the Zariski topology.

We give now the definition of a regulous function on a real
algebraic set \cite[Cor. 5.38]{FHMM}. We recall that in real
algebraic geometry, when we focus only on real points then we are concerned almost exclusively with affine
varieties (see \cite[Rem. 3.2.12]{BCR}) and thus with real algebraic sets.
\begin{defn}
\label{def1X}
Let $X$ be a real algebraic subset of $\R^n$.
A $k$-regulous function on $X$ is the
restriction to $X$ of a $k$-regulous function on $\R^n$. The ring of
$k$-regulous functions on $X$, denoted by $\SR^k(X)$, corresponds to $$\SR^k(X)=\SR^k(\R^n)/\I_k(X)$$
where $\I_k(X)$ is the ideal of $\SR^k(\R^n)$ of $k$-regulous functions
on $\R^n$ that vanish identically on $X$.
\end{defn}

\begin{rem} In \cite{FHMM} the previous definition is extended to the case $X$
  is a closed regulous subset of $\R^n$.
\end{rem}

Recall that a real function on a semi-algebraic set is called
semi-algebraic if its graph is a semi-algebraic set.
\begin{prop}
\label{s-a1}
Let $X$ be a real algebraic subset of $\R^n$. A regulous function on
$X$ is a semi-algebraic function.
\end{prop}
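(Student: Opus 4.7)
The plan is to reduce to the case $X=\R^n$ via Definition~\ref{def1X}, and then show that a regulous function $F$ on $\R^n$ is semi-algebraic by expressing its graph as the Euclidean closure of a visibly semi-algebraic set.

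First I would use the definition: if $f$ is regulous on $X$, then $f=F|_X$ for some regulous $F:\R^n\to\R$, and $\graph(f)=\graph(F)\cap(X\times\R)$. Since $X$ is a real algebraic, hence semi-algebraic, subset of $\R^n$, it suffices to prove $F$ is semi-algebraic on $\R^n$.

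Next, by Definition~\ref{def1Rn}, there is a non-empty Zariski open $U\subseteq\R^n$ on which $F$ is regular; shrinking $U$ if necessary, I may write $F|_U=p/q$ for polynomials $p,q\in\R[x_1,\dots,x_n]$ with $q$ nowhere zero on $U$. Then
\[
\graph(F|_U)=\bigl\{(x,y)\in\R^n\times\R \;:\; q(x)\neq 0,\; q(x)y-p(x)=0\bigr\}
\]
is evidently semi-algebraic. Now the key step: I claim that $\graph(F)$ equals the Euclidean closure of $\graph(F|_U)$ in $\R^n\times\R$. Indeed, $U$ is Zariski dense, hence Euclidean dense in $\R^n$, so continuity of $F$ implies that every point $(x_0,F(x_0))$ is a limit of points $(x_k,F(x_k))$ with $x_k\in U$; conversely, any limit point $(x_0,y_0)$ of $\graph(F|_U)$ satisfies $y_0=F(x_0)$ by continuity of $F$. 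The Euclidean closure of a semi-algebraic set is semi-algebraic by the Tarski--Seidenberg theorem, so $\graph(F)$ is semi-algebraic, and we are done.

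The only delicate point is the third step: without continuity the algebraic equation $qy-p=0$ gives either no solutions (when $q=0$, $p\neq 0$) or a full vertical line (when $q=p=0$) over points of $\R^n\setminus U$, so one really needs the topological input that $F$ is continuous and $U$ is Euclidean dense in order to identify $\graph(F)$ with the closure of $\graph(F|_U)$. Everything else is a direct application of definitions and the standard stability of semi-algebraic sets under intersection and Euclidean closure.
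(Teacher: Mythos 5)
Your proof is correct. The first step (reduce to the ambient space via $f=F|_X$ and $\graph(f)=\graph(F)\cap(X\times\R)$) is exactly what the paper does; the difference is that the paper then simply cites \cite[Prop.~3.1]{FHMM} for the fact that a regulous function on $\R^n$ is semi-algebraic, whereas you prove that fact directly by identifying $\graph(F)$ with the Euclidean closure of the semi-algebraic set $\graph(F|_U)$. Your closure argument is sound: for $\R^n$ a non-empty Zariski open set is indeed Euclidean dense (its complement is a proper algebraic subset, hence has empty interior), and continuity of $F$ gives both inclusions; note this is precisely the mechanism the paper itself uses later, in the proof of Proposition~\ref{s-a2}, for rational continuous functions on central sets (there the Euclidean density of the regular locus is the nontrivial input, supplied by Lemma~\ref{contreg}). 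What your route buys is self-containedness at the cost of a slightly longer argument; what the paper's route buys is brevity by outsourcing the ambient case. One small point of hygiene: after writing $F=p/q$ you should either take $U=\{q\neq 0\}$ outright (justifying $F=p/q$ there by continuity and density, as you implicitly do) or keep the condition $x\in U'$ in your description of the graph; as written the displayed set is the graph over $\{q\neq0\}$ rather than over the originally given $U$. This does not affect the validity of the argument.
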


\begin{proof}
Let $f\in\SR^0(X)$. By definition, $f$ is the restriction to $X$ of a
regulous function $\hat{f}\in\SR^0(\R^n)$. The function $f$ is
semi-algebraic since $\hat{f}$ is semi-algebraic
\cite[Prop. 3.1]{FHMM}.
\end{proof}

Let $X\subset \R^n$ be a real algebraic set, we will denote by $\SO (X)$
the ring of regular functions on $X$, by $\Pol (X)$ the ring of
polynomial functions on $X$ and by $\K (X)$ the ring of rational
functions on $X$.
By \cite[Prop. 8]{KN} or \cite[Thm. 4.1]{FHMM}, a regulous function on $X$
is always rational on $X$ (coincides with a regular function on a dense
Zariski open subset of $X$).
Since the regulous topology on $X$ is
sometimes strictly finer than the Zariski topology on $X$, the ring
$\SR^0(X)$ is not always a subring of $\K(X)$ even if $X$ is Zariski
irreducible. We will denote by $\Z(f)$ the zero set of a real function $f$
on $X$.

\begin{ex}
\label{exemple1}
{\rm Let $X$ be the plane cubic with an isolated point
$X=\Z(x^2+y^2-x^3)$. The curve $X$ is Zariski irreducible but $\Co$-reducible. The
$\Co$-irreducible components of $X$ are $F$ and $\{(0,0)\}$ where
$F=\Z(f)\subset \R^2$, with $f=1-\dfrac{x^3}{x^2+y^2}$ extended
continuously at the origin, is
the smooth branch of $X$. 
  \begin{figure}[ht]
\centering
\includegraphics[height =4cm]{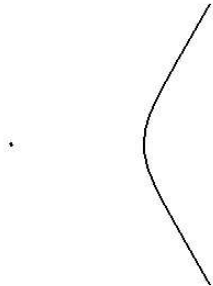}
\caption{Cubic curve with an isolated point.}
        \label{fig.cubic}
\end{figure}
The ring
 $\SR^0(X)$ is the cartesian product $\SR^0(F)\times \R$ and the
 class of $f$ in $\SR^0(X)$ is $(0,1)$. Remark that the ring
 $\SR^0(X)$ is not an integral domain and consequently it is not a
 subring of $\K(X)$.}
\end{ex}

Let $X$ be a real algebraic subset of $\R^n$. Let $f\in\K (X)$ and let
$U$ be a dense Zariski open subset of
$X$, we say that the couple $(U,f|_U)$ or the function $f|_U$ is
a regular presentation of $f$ if $f|_U$ is regular.
We have a natural ring morphism
$\phi^0:\SR^0(X)\rightarrow \K(X)$ which send $f\in \SR^0(X)$ to the
class $(U,f|_U)$ in $\K(X)$, where $(U,f|_U)$ is a regular
presentation of $f$.
We have seen that $\phi^0$ is not always injective. 

\begin{defn}
Let $X$ be a real algebraic subset of $\R^n$. Let $f\in \K(X)$. We say
that the rational function $f$ can be extended continuously to $X$ if
there exists a regular presentation $f|_U$ of $f$ that can be extended
continuously to $X$.
\end{defn}

In the following,
we will denote by $\overline{E}^{\tau}$ the closure of the subset $E$
of $\R^n$ for the topology $\tau$ on $\R^n$. We prove now that $\phi^0$ is injective
if and only if $\overline{X_{reg}}^{\Co}=X$,
$X_{reg}$ denoting the smooth locus of
$X$. If $X$ is irreducible then the condition $\overline{X_{reg}}^{\Co}=X$ means that $X$ is also irreducible for the
$\Co$-topology (see \cite{FHMM}).

\begin{lem}
\label{contreg}
Let $X$ be a real algebraic subset of $\R^n$. Let $U$ be a dense Zariski open subset of
$X$. Then $X_{reg}\subset \overline{U}^{eucl}$.
\end{lem}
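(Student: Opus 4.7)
The plan is to localize at $x$ and reduce to a single irreducible component. Let $x \in X_{reg}$. Since $x$ is a smooth point of $X$, the local ring $\SO_{X,x}$ is a regular local ring, in particular an integral domain. This forces $x$ to belong to a unique irreducible component $X_0$ of $X$ (as a real algebraic set), and the germ of $X$ at $x$ coincides with that of $X_0$. Because $U$ is Zariski dense in $X$, the intersection $U\cap X_0$ is non-empty---otherwise $X_0\subseteq X\setminus U$ would contradict Zariski density---so $U\cap X_0$ is a dense Zariski open subset of the irreducible real algebraic set $X_0$.

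Next I would exploit the local analytic structure. The smooth point $x$ of $X_0$ has a Euclidean neighborhood in which $X_0$ is a real analytic submanifold of $\R^n$ of some dimension $d$; this $d$ coincides with the Krull dimension $\dim\R[X_0]$, since real and algebraic dimensions agree at a smooth point. The set $F_0=X_0\setminus U$ is a proper Zariski closed subset of the irreducible $X_0$, hence has algebraic dimension $<d$, and therefore semi-algebraic (real) dimension $<d$ as well. A set of strictly smaller real dimension cannot contain any non-empty Euclidean open subset of the $d$-dimensional manifold $X_0$ near $x$. Consequently every Euclidean neighborhood of $x$ in $X_0\subseteq X$ meets $U\cap X_0\subseteq U$, which shows $x\in\overline{U}^{eucl}$.

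The points where I expect to need care are (i) justifying that $x$ lies in a unique irreducible component of $X$, which comes from $\SO_{X,x}$ being a domain but requires matching the paper's notion of smoothness with the regular local ring condition, and (ii) the agreement of Krull dimension with semi-algebraic dimension at a smooth real point. Both are standard and can be invoked from \cite{BCR}; once they are in hand, the dimension-based argument closes the proof routinely.
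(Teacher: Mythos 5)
Your proof is correct and follows essentially the same route as the paper's: both arguments reduce to a single irreducible component through $x$ and then derive a contradiction from the dimension count that an open neighborhood of a smooth point has full dimension $d=\dim X_0$ while $X_0\setminus U$ is a proper Zariski closed subset of dimension $<d$. The paper simply says ``without loss of generality assume $X$ irreducible'' and cites \cite[Prop.~7.6.2]{BCR} where you invoke the regular local ring and the local analytic manifold structure, but these are cosmetic differences in how the same argument is packaged.
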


\begin{proof}
Without loss of generality we can assume $X$ is irreducible. Let $Z$
denote the Zariski closed set $X\setminus U$. Assume $x\in
X_{reg}\setminus \overline{U}^{eucl}$. So there exists an open
semi-algebraic subset $U'$ of $X$ such that $x\in U'$ and $U'\subset
X\setminus \overline{U}^{eucl}\subset Z$. Hence $\dim U'\leq \dim
Z<\dim X$, this is impossible by \cite[Prop. 7.6.2]{BCR}.
\end{proof}

\begin{prop}
\label{inclusion}
Let $X$ be a real algebraic subset of $\R^n$.
The map $\phi^0:\SR^0(X)\rightarrow \K(X)$ is injective if and only if
$\overline{X_{reg}}^{\Co} =X$.
\end{prop}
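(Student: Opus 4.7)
The plan is to identify $\ker\phi^0$ with the regulous functions on $X$ that vanish on $\overline{X_{reg}}^{\Co}$, and deduce the characterization from there.

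For the implication $(\Leftarrow)$, assume $\overline{X_{reg}}^{\Co}=X$ and fix $f\in\ker\phi^0$. By definition of $\phi^0$, the function $f$ admits a regular presentation on a dense Zariski open subset $U\subseteq X$ that is identically zero, so $f\equiv 0$ on $U$. Since $f$ is continuous, $f\equiv 0$ on $\overline{U}^{eucl}$, and Lemma \ref{contreg} yields $X_{reg}\subseteq \overline{U}^{eucl}$; hence $X_{reg}\subseteq\Z(f)$. The set $\Z(f)$ is closed in the regulous topology by the very definition of this topology, so $\Z(f)\supseteq\overline{X_{reg}}^{\Co}=X$, whence $f\equiv 0$.

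For the implication $(\Rightarrow)$, I argue by contrapositive: suppose $\overline{X_{reg}}^{\Co}\subsetneq X$ and pick $x_0\in X\setminus\overline{X_{reg}}^{\Co}$. The complement is regulous open, and since the basic open sets $\{g\neq 0\}$ with $g\in\SR^0(X)$ are stable under finite intersection via $\{g_1\neq 0\}\cap\{g_2\neq 0\}=\{g_1g_2\neq 0\}$, they form a genuine basis of the regulous topology. Hence one can find $g\in\SR^0(X)$ with $g(x_0)\neq 0$ and $\{g\neq 0\}\cap\overline{X_{reg}}^{\Co}=\emptyset$, so $g\equiv 0$ on $X_{reg}$. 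Since $g$ is rational on $X$, it is regular on some dense Zariski open $U\subseteq X$; and $X_{reg}$ is Zariski dense in $X$ (working component by component, using that the smooth locus of each Zariski-irreducible component $X_i$ is Zariski open dense in $X_i$ and that the intersections with other components are proper Zariski closed subsets). Therefore $U\cap X_{reg}$ is Zariski dense in $U$, and $g|_U$ is a regular function on $U$ vanishing on a Zariski dense subset, hence vanishing identically. Thus $\phi^0(g)=0$ while $g(x_0)\neq 0$, contradicting injectivity.

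The crux of the argument is the converse direction, specifically the extraction of a single regulous function $g$ separating $x_0$ from $\overline{X_{reg}}^{\Co}$; this rests on the fact that products of regulous functions are regulous, which upgrades the collection $\{g\neq 0\}$ from a mere subbasis to a basis of the regulous topology. Once this is secured, the remaining steps are standard continuity and density considerations.
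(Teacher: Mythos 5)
Your proof is correct and follows essentially the same route as the paper's: the forward direction combines Lemma \ref{contreg} with the regulous-closedness of $\Z(f)$ to force a kernel element to vanish on $\overline{X_{reg}}^{\Co}=X$, and the converse produces a nonzero regulous function vanishing on $X_{reg}$, hence generically zero as a rational function. The only cosmetic difference is in the converse, where the paper invokes \cite[Thm.~6.13]{FHMM} to obtain a regulous function with zero set exactly $\overline{X_{reg}}^{\Co}$, while you use the (equivalent) fact that the sets $\{g\neq 0\}$ form a basis of the regulous topology.
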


\begin{proof}
Assume $\overline{X_{reg}}^{\Co} =X$. Let $f_1,f_2\in\SR^0(X)$ be such that $\phi^0(f_1)=\phi^0(f_2)$. Let
$\hat{f}_i\in\SR^0(\R^n)$, $i=1,2$, be such that $
\hat{f}_i|_X=f_i$. Since $f_1$ and $f_2$ are two continuous extensions
to $X$ of the same rational function on $X$, they coincide on
$X_{reg}$ by Lemma \ref{contreg}. Hence $\hat{f}_1-\hat{f}_2$ vanishes on $X$ since $X$ is the regulous
closure of $X_{reg}$. It implies that $f_1=f_2$.

Assume $\overline{X_{reg}}^{\Co} \not=X$. By \cite[Thm. 6.13]{FHMM},
we may write $X=\overline{X_{reg}}^{\Co} \cup F$ with $F$ a non-empty
regulous closed subset of $\R^n$ such that $\dim F<\dim X$. Let
$\hat{f}\in\SR^0(\R^n)$ be such that
$\Z(\hat{f})=\overline{X_{reg}}^{\Co}$ and let $f$ denote the
restriction of $\hat{f}$ to $X$. We have $f\not= 0$ in $\SR^0(X)$,
$\phi^0(f)=0$ in $\K(X)$ and thus $\phi^0$ is non injective.
\end{proof} 

\subsection{Rational continuous functions on central real algebraic sets}

Let $n$ be a positive integer and let $X\subset \R^n$ be a
real algebraic set. Let $f\in \K(X)$ be a rational function on
$X$. The domain of $f$, denoted by $\dom (f)$, is the biggest dense Zariski open subset of $X$ on
which $f$ is regular, namely $f=\dfrac{p}{q}$ on $\dom (f)$ where $p$ and $q$ are polynomial
functions on $\R^n$ such that $\Z(q)=X\setminus\dom(f)$ (see
\cite[Prop. 2.9]{FHMM}). The
indeterminacy locus or polar locus of $f$ is defined to be the Zariski closed set
$\pol(f)=X\setminus \dom(f)$. By definition, $\dim \pol(f)<\dim X$.

\begin{defn}
\label{defratcont}
Let $X$ be a real algebraic subset of $\R^n$. Let $f$ be a real continuous function on $X$. We say that
$f$ is a rational continuous function on $X$ if $f$ is rational on $X$
i.e there exists a
dense Zariski open subset $U\subseteq X$ such that $f|_U$ is
regular. 
\end{defn}

\begin{rem} We may also define a rational continuous function as a
  continuous extension of a rational function.
\end{rem}

Let $\SR_0(X)$ denote the ring of rational continuous
functions on $X$.
We have a natural ring morphism
$\phi_0:\SR_0(X)\rightarrow \K(X)$ which send $f\in \SR_0(X)$ to the
class $(U,f|_U)$ in $\K(X)$, where $(U,f|_U)$ is a regular
presentation of $f$.

\begin{rem} We have $\SR_0(\R^n)=\SR^0(\R^n)$.
\end{rem}

\begin{defn} 
\label{central}
We say that $X$ is ``central'' if $\overline{X_{reg}}^{eucl}=X$.
\end{defn}

\begin{rem} The previous definition comes from the introduction of the
  the central locus of a real algebraic set made in
  \cite[Def. 7.6.3]{BCR}. By \cite[Prop. 7.6.2]{BCR}, an irreducible
  real algebraic set $X$ is central if and only if the dimension at
  any point of $X$ is maximal.
\end{rem}

The property to be central is a property of an algebraic set that
ensures a rational continuous function on it to be the unique possible
continuous extension of its associated rational function. It also
ensures that rational continuous functions are
semi-algebraic functions.
The following example illustrates these facts.

\begin{ex}
\label{exemple4}
{\rm Let $X=\Z(zx^2-y^2)\subset\R^3$ be the Whitney umbrella.
\begin{figure}[ht]
\centering
\includegraphics[height =6cm]{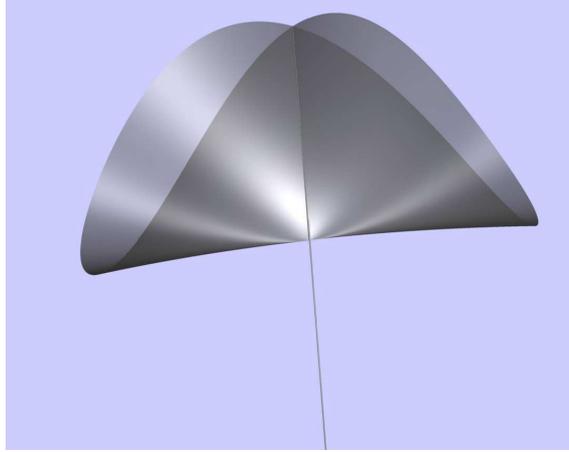}
\caption{Whitney umbrella.}
        \label{fig.whitney}
\end{figure}
By \cite{FHMM}, $X$ is irreducible in the $\Co$-topology and
we have $$
 \overline{X_{reg}}^{\AR}
=   \overline{X_{reg}}^{\Co}=\overline{X_{reg}}^{Zar}=X.$$
The set $X\setminus\overline{X_{reg}}^{eucl}$ is the half of the
stick. The function $\dfrac{y^2}{x^2}|_X$ is regular on $X$ outside of
the stick and so it
gives rise of a rational function on $X$. 
Its class in $\K(X)$ is also the class of the regular function $z|_X$
($(X\setminus\Z(x^2+y^2),\dfrac{y^2}{x^2}|_{X\setminus\Z(x^2+y^2)})$
and $(X,z|_X)$ are two regular presentations of the same rational function). This rational function
can be extended continuously in many different ways to $X$: we can extend
the regular presentation $(X\setminus\Z(x^2+y^2),\dfrac{y^2}{x^2}|_{X\setminus\Z(x^2+y^2)})$
by $z$ on $X\cap \Z(x^2+y^2)$ (we get the regular function $z|_X$
on $X$) but we can also
extend it by $z$ on $X\cap \Z(x^2+y^2)\cap \{z\geq 0\}$ and by $\sin z$ on
$X\setminus\overline{X_{reg}}^{eucl}=X\cap \Z(x^2+y^2)\cap \{z<
0\}$. The first extension is the unique
regulous extension to $X$ of $\dfrac{y^2}{x^2}$ (Proposition
\ref{inclusion}) and the second one is
a non-regulous rational
continuous function on $X$ that is not semi-algebraic. Consequently, the map
$\phi_0:\SR_0(X)\rightarrow \K(X)$ is not injective.}
\end{ex}

\begin{prop}
\label{inclusion2}
Let $X$ be a real algebraic subset of $\R^n$. The map $\phi_0:\SR_0(X)\rightarrow \K(X)$ is injective if and only if $X$
is central.
\end{prop}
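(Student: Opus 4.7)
The plan is to prove the two directions of the equivalence separately. For the ``$\Leftarrow$'' direction (centrality implies injectivity), I will combine Lemma \ref{contreg} with euclidean continuity. For the ``$\Rightarrow$'' direction, I will produce a nontrivial element of the kernel of $\phi_0$ by directly transcribing the non-regulous extension constructed in Example \ref{exemple4}.

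For the forward direction, I suppose $X$ is central and take $f_1, f_2 \in \SR_0(X)$ with common image $r := \phi_0(f_1) = \phi_0(f_2) \in \K(X)$. On the Zariski dense open $D := \dom(r)$, the continuous function $f_i$ agrees with the regular function $r$ on the dense Zariski open subset used to compute $\phi_0(f_i)$, hence by continuity $f_i = r$ on all of $D$; in particular $f_1|_D = f_2|_D$. Lemma \ref{contreg} gives $X_{reg} \subseteq \overline{D}^{eucl}$, and centrality $X = \overline{X_{reg}}^{eucl}$ then forces $\overline{D}^{eucl} = X$. By continuity of $f_1$ and $f_2$ on $X$, the equality on the euclidean-dense $D$ extends to $X$, so $f_1 = f_2$.

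For the converse I argue contrapositively. If $X$ is not central, then $W := X \setminus \overline{X_{reg}}^{eucl}$ is a non-empty euclidean open subset of $X$ disjoint from $X_{reg}$, hence contained in $\Sing(X)$. I pick $p \in W$ and $\epsilon > 0$ small enough that $B(p,\epsilon) \cap X \subseteq W$, and set $f(x) := \max(0, \epsilon - \|x-p\|)$ for $x \in X$. Then $f$ is continuous on $X$, $f(p) > 0$, and $f$ vanishes on the complement of $B(p,\epsilon) \cap X$, and in particular on $X_{reg}$. Thus $(X_{reg}, 0)$ is a regular presentation showing $f \in \SR_0(X)$ with $\phi_0(f) = 0$, while $f \neq 0$, so $\phi_0$ is not injective.

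The only delicate point I anticipate is verifying that $X_{reg}$ is Zariski dense in $X$ so that $(X_{reg}, 0)$ is a legitimate regular presentation. This is standard once one reduces to a real algebraic set with no irreducible component contained in another, where $\dim \Sing(X_i) < \dim X_i$ for each component $X_i$. Beyond that routine check, the whole proof is a direct variant of the Whitney umbrella example.
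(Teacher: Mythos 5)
Your proof is correct and follows essentially the same route as the paper: Lemma \ref{contreg} plus centrality for the injectivity direction, and for the converse a continuous function supported in $X\setminus\overline{X_{reg}}^{eucl}$ and vanishing on $X_{reg}$ (your bump function just makes explicit what the paper merely asserts exists). One small imprecision: the intermediate claim that $f_i=r$ on all of $D=\dom(r)$ does not follow from continuity alone, since $D$ may contain points (e.g.\ isolated points of $X$) lying outside the euclidean closure of the presentation domain $U_i$; this is harmless here, because you only need $f_1=f_2$ on some dense Zariski open set --- take $U_1\cap U_2$ in place of $D$, apply Lemma \ref{contreg} to it, and the rest of your argument goes through unchanged.
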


\begin{proof}
Under the hypothesis $X=\overline{X_{reg}}^{eucl}$, it follows from
Lemma \ref{contreg} that if a rational
function of $\K(X)$ has a continuous extension to $X$ then this
extension is the unique possible continuous extension. 

Assume $X$ is not central. It is always possible to extend the null
function on $\overline{X_{reg}}^{eucl}$ to a continuous function $f$
on $X$ such that $f$ is not the null function on $X$. The function $f$
is rational on $X$ since it has a regular presentation on
$X_{reg}$. Obviously, $f$ is a non-trivial element of the kernel of
$\phi_0$ and the proof is done.
\end{proof}

\begin{prop}
\label{s-a2}
Let $X$ be a central real algebraic subset of $\R^n$. The rational continuous
functions on $X$ are semi-algebraic functions.
\end{prop}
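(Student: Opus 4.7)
The plan is to represent the graph of $f$ as the euclidean closure of the graph of a regular presentation, and invoke that closures of semi-algebraic sets are semi-algebraic.

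Let $f\in\SR_0(X)$. By Definition \ref{defratcont} there is a dense Zariski open subset $U\subseteq X$ such that $f|_U$ is regular, so $f|_U=p/q$ for polynomials $p,q$ on $\R^n$ with $q$ nowhere vanishing on $U$. Set
\[
\Gamma_0=\{(x,f(x)) : x\in U\}\quad\text{and}\quad \Gamma=\{(x,f(x)) : x\in X\}.
\]
The set $\Gamma_0$ is cut out inside $U\times\R$ by the polynomial equation $q(x)t-p(x)=0$, hence is semi-algebraic in $\R^n\times\R$. Consequently its euclidean closure $\overline{\Gamma_0}^{eucl}$ is semi-algebraic as well.

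The bulk of the proof is to show $\Gamma=\overline{\Gamma_0}^{eucl}$. Since $X$ is closed in $\R^n$ and $f$ is continuous on $X$, the graph $\Gamma$ is closed in $\R^n\times\R$ and contains $\Gamma_0$, so $\overline{\Gamma_0}^{eucl}\subseteq\Gamma$. For the reverse inclusion I first claim that $X\subseteq\overline{U}^{eucl}$. Indeed, by Lemma \ref{contreg} applied to the dense Zariski open set $U$, we have $X_{reg}\subseteq\overline{U}^{eucl}$; taking euclidean closures and using the hypothesis that $X$ is central yields
\[
X=\overline{X_{reg}}^{eucl}\subseteq\overline{U}^{eucl}\subseteq X.
\]
Now fix $(x,f(x))\in\Gamma$. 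By the inclusion just proved, there is a sequence $(x_m)\subset U$ with $x_m\to x$ in the euclidean topology, and by continuity of $f$ on $X$ we have $f(x_m)\to f(x)$. Hence $(x_m,f(x_m))\in\Gamma_0$ converges to $(x,f(x))$, giving $\Gamma\subseteq\overline{\Gamma_0}^{eucl}$.

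Combining both inclusions, $\Gamma=\overline{\Gamma_0}^{eucl}$ is semi-algebraic, so $f$ is a semi-algebraic function. The main conceptual step, and the place where centrality is essential, is precisely the upgrade from $X_{reg}\subseteq\overline{U}^{eucl}$ (Lemma \ref{contreg}, valid for any $X$) to $X\subseteq\overline{U}^{eucl}$; without centrality a rational continuous function can fail to be semi-algebraic, exactly as illustrated by the non-regulous continuous extension constructed on the Whitney umbrella in Example \ref{exemple4}.
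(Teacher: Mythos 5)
Your proof is correct and follows essentially the same route as the paper: identify the graph of $f$ with the euclidean closure of the semi-algebraic graph of a regular presentation $f|_U$, using Lemma \ref{contreg} together with centrality to get $X\subseteq\overline{U}^{eucl}$, and conclude since closures of semi-algebraic sets are semi-algebraic. You have merely spelled out in detail the steps the paper leaves implicit.
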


\begin{proof}
Let $f\in\SR_0(X)$ and let $(U,f|_U)$ be a regular presentation of
$f$. It is clear that $f|_U$ is a semi-algebraic function (on $U$).
By Lemma \ref{contreg}, the graph of $f$ is the euclidean closure of
the graph of $f|_U$. The function $f$ is semi-algebraic by
\cite[Prop. 2.2.2]{BCR}.
\end{proof}

\begin{rem}
There exist non-central real algebraic sets for which the rational
continuous functions are always semi-algebraic: Consider the
non-central real algebraic
set $X$ of Example \ref{exemple1}. By Corollary \ref{courbe} and
Proposition \ref{s-a1}, a rational continuous function on $X$ is
semi-algebraic. More generally, it is not difficult to prove that: all the
rational continuous functions on a real algebraic set $X$ are
semi-algebraic if and only if $\dim (X\setminus
\overline{X_{reg}}^{eucl})<1$.
\end{rem}

In the following, to simplify notation, we sometimes identify a
rational continuous function on a central real algebraic set with one
of its regular presentations
(e.g. $\dfrac{x^3}{x^2+y^2}\in\SR^0(\R^2)$).
By \cite[Prop. 8]{KN} or \cite[Thm. 4.1]{FHMM}, any $f\in \SR^0(X)$ can be identified with a unique function in
$\SR_0(X)$. Hence we get:
\begin{prop}
\label{inclusion1}
Let $X$ be a real algebraic subset of $\R^n$. We have the following ring inclusion $\phi^0_0:\SR^0(X)\hookrightarrow
\SR_0(X)$ and moreover
$$\phi^0=\phi_0\circ \phi^0_0.$$
\end{prop}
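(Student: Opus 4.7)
The proof of Proposition \ref{inclusion1} should be very short; the substantive content is already packaged in the Koll\'ar--Nowak / Fichou--Huisman--Mangolte--Monnier theorem cited just before the statement. My plan is to unpack what the map $\phi^0_0$ does on representatives and then verify the three claims: it is well defined, it is injective, and it intertwines $\phi^0$ and $\phi_0$.

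First I would define $\phi^0_0$ at the level of representatives. Given $f\in\SR^0(X)=\SR^0(\R^n)/\I_0(X)$, lift $f$ to some $\hat f\in\SR^0(\R^n)$ and set $\phi^0_0(f)=\hat f|_X$. Since $\hat f$ is continuous on $\R^n$, the restriction $\hat f|_X$ is continuous on $X$; and by \cite[Prop. 8]{KN} (equivalently \cite[Thm. 4.1]{FHMM}) the restriction of a regulous function to a real algebraic subset is still rational, so $\hat f|_X\in\SR_0(X)$. Well-definedness with respect to the choice of lift is immediate: if $\hat f_1-\hat f_2\in\I_0(X)$ then by definition $\hat f_1|_X=\hat f_2|_X$. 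Ring structure is preserved because the operations in both $\SR^0(X)$ and $\SR_0(X)$ are the pointwise ones on $X$.

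Next I would check injectivity. If $\phi^0_0(f)$ is the zero function on $X$, then any regulous lift $\hat f$ of $f$ vanishes identically on $X$, hence $\hat f\in\I_0(X)$, which means $f=0$ in $\SR^0(X)$. This justifies the hookrightarrow notation.

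Finally, for the identity $\phi^0=\phi_0\circ\phi^0_0$, it suffices to observe that the two maps agree on representatives. Given $f\in\SR^0(X)$, let $(U,f|_U)$ be any regular presentation; then $(U,f|_U)$ is also a regular presentation of the rational continuous function $\phi^0_0(f)$, since $\phi^0_0$ does not change the underlying function on $X$. Both $\phi^0(f)$ and $\phi_0(\phi^0_0(f))$ are by definition the class of $(U,f|_U)$ in $\K(X)$, so they coincide. No step is a real obstacle here: the only non-trivial input is the cited Koll\'ar--Nowak theorem ensuring that a regulous function on $\R^n$ restricts to a rational function on $X$, everything else is a formal unpacking of definitions.
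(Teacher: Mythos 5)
Your proof is correct and follows exactly the route the paper intends: the paper gives no explicit argument for this proposition, merely citing \cite[Prop. 8]{KN} / \cite[Thm. 4.1]{FHMM} to say that any $f\in\SR^0(X)$ can be identified with a unique function in $\SR_0(X)$, and your write-up is simply a careful unpacking of that identification (well-definedness, injectivity, and compatibility with the maps to $\K(X)$). Nothing is missing.
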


\begin{rem} Let $X$ be a real algebraic subset of $\R^n$ such that
  $\overline{X_{reg}}^{\Co} =X$ and $X$ is not central (e.g the
  Whitney umbrella). By Propositions \ref{inclusion1} and
  \ref{inclusion2}, we see that in this case the map $\phi^0_0$ is not
  surjective i.e there is a rational continuous function on $X$ which
  is not regulous.
\end{rem}

In the following example, due to Koll\'ar and Nowak \cite[Ex. 2]{KN}, we will see that,
even if $X$ is central,
$\phi^0_0$ may be not surjective.

\begin{ex}
\label{exemple5}
{\rm Let $X=\Z(x^3-(1+z^2)y^3)\subset \R^3$. Then $X$ is a central singular
  surface with singular locus the $z$-axis. 
By \cite[Ex. 2]{KN}, the class of the rational fraction $\dfrac{x}{y}|_X$ in $\K(X)$
can be extended continuously to $X$ (in a unique way) by the function
$(1+z^2)^{\frac{1}{3}}$ on the $z$-axis and gives an element
$f\in\SR_0(X)$. Moreover, $f$ cannot be extended to an element of
$\SR_0(\R^3)=\SR^0(\R^3)$ (the reason is that the restriction of $f$ to the
$z$-axis $(1+z^2)^{\frac{1}{3}}$ is not rational) and thus $f$ is not in $\SR^0(X)$. Here the map $\phi^0_0:\SR^0(X)\hookrightarrow
\SR_0(X)$ is not surjective and the map $\phi_0:\SR_0(X)\rightarrow \K(X)$
is injective.}
\end{ex}

One of the goal of the paper \cite{KN} was to study the surjectivity of the
map $\phi^0_0$ when $X$ is a central real algebraic set. Notice that
``regulous functions'' are named ``hereditarily rational continuous
functions'' in \cite{KN}.

We reformulate with our notation the three principal results of \cite{KN}
with an improvement of the first one.

The following lemma can be obtained from the arguments used in the
proof of \cite[Prop. 11]{KN}.
\begin{lem} (proof of \cite[Prop. 11]{KN})\\
\label{extensionKN} Let $X\subset \R^n$ be a real algebraic set and let $f\in \SR_0(X)$. Let $W=\pol(f)$
be the polar locus of $f$ in $X$. If $f|_W\in \SR^0(W)$  has the additional
property that $f|_W$ is the restriction to $W$ of $g\in\SR^0(\R^n)$ such
that $g$ is regular on $\R^n\setminus W$ then $$f\in\SR^0(X).$$
Moreover, $f$ has also the additional property that there exists $\hat{f}\in\SR^0(\R^n)$ such
that $\hat{f}$ is regular on $\R^n\setminus W$ and $\hat{f}|_X=f$.
\end{lem}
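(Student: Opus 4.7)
My plan is to introduce the correction $h := f - g|_X$, which vanishes on $W$, and to reduce the whole statement to constructing a regulous extension of $h$ to all of $\R^n$ that is regular off $W$. First, I would check that $h$ has the advertised properties. By Proposition~\ref{inclusion1} we have $g|_X\in\SR_0(X)$, whence $h\in\SR_0(X)$. Since $f$ is regular on $X\setminus W$ (as $W=\pol(f)$) and $g$ is regular on $\R^n\setminus W$ by hypothesis, $h$ is regular on $X\setminus W$, so $\pol(h)\subseteq W$; moreover the compatibility $f|_W=g|_W$ forces $h|_W=0$. Granted an extension $\tilde h\in\SR^0(\R^n)$ of $h$ that is regular on $\R^n\setminus W$, the function $\hat f:=g+\tilde h$ lies in $\SR^0(\R^n)$, is regular on $\R^n\setminus W$, and restricts to $f$ on $X$, finishing the proof. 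The problem is thus reduced to producing $\tilde h$.

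Next I would build $\tilde h$ by an explicit rational expression on $\R^n$. Write $h=u/v$ on $X\setminus W$ with $u,v\in\Pol(\R^n)$ and $\Z(v)\cap X\subseteq W$. Let $P,Q\in\Pol(\R^n)$ be sums of squares of generators of $\I(X)$ and of the ideal of $W$ in $\R^n$ respectively, so that $\Z(P)=X$, $\Z(Q)=W$, and $P,Q\geq 0$. The candidate is
$$\tilde h(x)\ :=\ \frac{u(x)\,v(x)\,Q(x)^{2N}}{v(x)^2\,Q(x)^{2N}+P(x)^{2M}}$$
for exponents $N,M\in\N$ to be tuned. On $X\setminus W$ the relations $P(x)=0$, $Q(x)\neq 0$, $v(x)\neq 0$ collapse the formula to $u/v=h$; off $X$ the denominator is at least $P(x)^{2M}>0$, so $\tilde h$ is a rational function regular on $\R^n\setminus X$, and therefore on all of $\R^n\setminus W$.

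The main obstacle is to prove that $\tilde h$ extends continuously by $0$ across $W$. Approach in $X$ is immediate from $\tilde h=h$ and $h|_W=0$. For a point $x_0\in W$ and a sequence $x_n\to x_0$ in $\R^n\setminus X$, both $P(x_n)$ and $Q(x_n)$ tend to $0$ and the ratio requires control. This is the step taken from the proof of \cite[Prop.~11]{KN}: one combines the global semi-algebraic {\L}ojasiewicz inequality applied to the inclusion $\Z(Q)=W\subseteq X=\Z(P)$ (which produces constants and an exponent $r$ with $|P(x)|^r\leq c\,(1+|x|^2)^\gamma|Q(x)|$ on $\R^n$) with the vanishing of $h$ on $W$ (yielding by the same tool a semi-algebraic estimate for $|u|/|v|$ near $W$), and then chooses $N$ and $M$ large enough relative to these exponents to force $\tilde h(x)\to 0$. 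This exponent-counting argument is the technical heart of the proof; once it is established, the properties of $\hat f=g+\tilde h$ deliver both conclusions of the lemma simultaneously.
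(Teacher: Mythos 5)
Your reduction is exactly the right first step and is indeed how Koll\'ar--Nowak begin: with $h=f-g|_X$ one has $h$ rational and continuous on $X$, regular on $X\setminus W$, vanishing on $W$, and the lemma follows once $h$ is extended to a regulous function on $\R^n$ regular off $W$; the verification that $\hat f=g+\tilde h$ then works is correct. The gap is in the construction of $\tilde h$, which is the entire technical content. The formula $\tilde h=\dfrac{uvQ^{2N}}{v^2Q^{2N}+P^{2M}}$ with ``$N$ and $M$ large'' fails. Viewed as a function of $\eta=|v(x)|$ with the other quantities frozen, the fraction is maximal at $\eta=P^M/Q^N$, where it equals $\dfrac{|u|\,Q^N}{2P^M}$; since $P$ vanishes on all of $X$ while $Q$ vanishes only on $W$, there are points arbitrarily close to $x_0\in W$ where $P$ is far smaller than any fixed power of $Q$, and nothing prevents $v$ from hitting the critical value there. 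Concretely, take $X=\Z(z)\subset\R^3$, $W=\{0\}$, $u=(x^2+y^2)^2$, $v=x^2+y^2-z$ (so $\Z(v)\cap X=W$ and $u/v=x^2+y^2$ on $X\setminus W$, a legitimate instance of your recipe), $P=z^2$, $Q=x^2+y^2+z^2$: along $(x,y,z)=(t,0,t^2-t^{4M-2N})$ one computes $\tilde h\sim\tfrac12\,t^{\,4+2N-4M}\to\infty$ whenever $M>1+N/2$ --- in particular for $N=M$ both large. The structural reason is that $Q^{2N}$ carries the \emph{same} exponent in numerator and denominator, so the bound $v^2Q^{2N}+P^{2M}\ge|v|^{2\theta}Q^{2N\theta}P^{2M(1-\theta)}$ cannot absorb $|v|Q^{2N}$ (which needs $\theta=1$ for the $Q$'s but $\theta=\tfrac12$ for the $v$'s) while retaining any positive power of $P$; the two \L ojasiewicz inputs you list ($P^r\lesssim Q$, and a bound for $|u|/|v|$ near $W$) cannot repair this, so the exponent counting reveals an obstruction rather than closing the argument.

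The device that actually works puts the large exponent on $v$ itself and keeps the power of $P$ \emph{fixed}. From $h|_{\Z(v)\cap X}=0$ and \L ojasiewicz on $X$ one gets $|u|\le C|v|^{1+\alpha}$ on $X$ near $x_0$, and then --- this is the transfer inequality missing from your list --- applying \L ojasiewicz to $\max(|u|-C|v|^{1+\alpha},0)$, which vanishes on $X$, gives $|u|\le C|v|^{1+\alpha}+C'P^{\varepsilon}$ on a neighbourhood of $x_0$ in $\R^n$. Setting $\tilde h=\dfrac{u\,v^{2m+1}}{v^{2m+2}+P^{2}}$, which still restricts to $h$ on $X$ and is regular off $\Z(v)\cap X\subseteq W$, the inequality $v^{2m+2}+P^2\ge|v|^{2m+1}P^{1/(m+1)}$ yields $|\tilde h|\le C|v|^{\alpha}+C'P^{\varepsilon-\frac{1}{m+1}}$, which tends to $0$ at the points of $\Z(v)\cap X$ once $m+1>1/\varepsilon$. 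So it is not a matter of tuning your exponents: the shape of the candidate must be changed, trading the matched powers of $Q$ for an unbalanced pair $v^{2m+1}/v^{2m+2}$.
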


We improve Lemma \ref{extensionKN} by removing the additional property
from the hypotheses.
\begin{lem}
\label{extension} Let $X\subset \R^n$ be a real algebraic set and let $f\in \SR_0(X)$. Let $W=\pol(f)$
be the polar locus of $f$ in $X$. If $f|_W\in \SR^0(W)$ then $$f\in\SR^0(X).$$
\end{lem}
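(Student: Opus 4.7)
The plan is to verify that $f$ is hereditarily rational continuous on $X$, equivalently (by the Koll\'ar--Nowak characterisation alluded to after Example~\ref{exemple5}) that $f\in\SR^0(X)$. Concretely, I will show that for every irreducible real algebraic subset $V\subseteq X$ the restriction $f|_V$ lies in $\SR_0(V)$; applying the same argument recursively to subvarieties of $V$ then delivers full hereditary rationality.

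Fix such a $V$ and distinguish two cases according to the position of $V$ with respect to the polar locus $W=\pol(f)$. If $V\not\subseteq W$, then $V\setminus W$ is a non-empty Zariski open subset of $V$ which is dense by the irreducibility of $V$. Since $f$ is regular on $X\setminus W$ by definition of the polar locus, $f|_V$ is regular on this dense Zariski open set, so $f|_V$ is rational on $V$; continuity is inherited from $f$, and hence $f|_V\in\SR_0(V)$. If instead $V\subseteq W$, choose $G\in\SR^0(\R^n)$ extending $f|_W$ (possible because $f|_W\in\SR^0(W)$ by hypothesis). Then $f|_V=(f|_W)|_V=G|_V\in\SR^0(V)\subseteq\SR_0(V)$, the inclusion being the restriction property of \cite[Prop.~8]{KN} / \cite[Thm.~4.1]{FHMM}.

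The two cases being exhaustive (and the argument recursing gracefully to lower-dimensional subvarieties of $V$), $f$ is hereditarily rational continuous on $X$ and thus $f\in\SR^0(X)$. The principal obstacle I anticipate is the precise invocation of the Koll\'ar--Nowak characterisation in the form required by Definition~\ref{def1X}; should this direct appeal prove insufficient, a fallback route is to exhibit an extension $\tilde g\in\SR^0(\R^n)$ of $f|_W$ that is additionally regular on $\R^n\setminus W$, and then to apply Lemma~\ref{extensionKN} directly. Producing such a ``tight'' extension from a mere regulous extension is, however, a non-trivial statement in its own right (it amounts to asking that the class of $f|_W$ in $\SR^0(\R^n)/\I_0(W)$ admit a representative with polar locus contained in $W$), which is why the hereditary approach is more immediate.
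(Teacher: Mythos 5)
Your proof is correct, but it takes a genuinely different route from the paper's. You verify directly that $f$ is hereditarily rational --- trivially on irreducible subvarieties $V\not\subseteq W$, where $f|_V$ is regular on the dense Zariski open set $V\setminus W$, and via the regulous extension $G$ of $f|_W$ on subvarieties $V\subseteq W$ --- and then invoke \cite[Thm.~10]{KN} (in the form used in the proof of Theorem~\ref{KolNow}) to conclude that a continuous hereditarily rational function on $X$ is the restriction of a regulous function on $\R^n$, hence lies in $\SR^0(X)$. The recursion you mention is superfluous: your two cases already cover every irreducible subvariety of $X$, which is exactly what hereditary rationality quantifies over. The paper instead carries out precisely the ``fallback route'' you describe and set aside as nontrivial: starting from a regulous extension $g$ of $f|_W$, it descends through the iterated polar loci $W\supseteq\pol(g|_W)\supseteq\pol\bigl(g|_{\pol(g|_W)}\bigr)\supseteq\cdots$ until the restriction becomes regular (and hence extends to a regular function on $\R^n$ by \cite[Prop.~3.2.3]{BCR}), then climbs back up by repeated application of Lemma~\ref{extensionKN}, whose ``moreover'' clause supplies at each stage an extension regular off the current polar locus; one final application of Lemma~\ref{extensionKN} with the pair $(X,W)$ finishes. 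Your route is shorter and conceptually clean, but it uses the full extension theorem of Koll\'ar--Nowak as a black box; the paper's route is self-contained modulo the elementary Lemma~\ref{extensionKN} (distilled from the proof of \cite[Prop.~11]{KN}) and, as a by-product, produces an explicit regulous extension of $f$ to $\R^n$ that is regular on $\R^n\setminus W$ --- information the black-box argument does not yield.
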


\begin{proof}
Assume $f|_W\in
\SR^0(W)$. By definition, there exists $g\in\SR^0(\R^n)$ such that
$g|_W=f|_W$. We denote by $g_0$ the regulous function $g|_W$.
We consider the
following sequence of regulous functions 
$$(g_0,g_1=(g_0)|_{\pol(g_0)},g_2=(g_1)|_{\pol(g_1)},\ldots)$$
on a sequence of Zariski closed subsets $(W_i=\pol(g_{i-1}))$ of $W$ of dimension strictly
decreasing and included one in another. The functions $g_i$ are regulous
since they are also a restriction of a regulous function on $\R^n$.
We claim that there exists an integer $m$
such that $g_m$ is a regular function on $W_m$. Indeed, $g_m$ is
automatically regular if $\dim W_m=0$
and we get the claim since $\dim W_{i+1}<\dim W_i$. By
\cite[Prop. 3.2.3]{BCR}, $g_m$ is the restriction to $W_m=\pol (g_{m-1})$ of regular
function $\hat{g}_m$ on $\R^n$. By Lemma \ref{extensionKN} for $f=g_{m-1}$,
$X=W_{m-1}$ and $W=W_m$, we get that $g_{m-1}$ is the restriction to
$W_{m-1}$ of a
regulous function $\hat{g}_{m-1}$ on $\R^n$ regular on
$\R^n\setminus\pol(g_{m-1})$. Repeated application of Lemma
\ref{extensionKN} enables us to see that
$g_0=f|_W$ is the restriction to $W$ of a
regulous function $\hat{g}_{0}$ on $\R^n$ regular on
$\R^n\setminus\pol(f|_W)$. Since $\pol (f|_W)\subset \pol(f)=W$, using one
last time Lemma \ref{extensionKN}, we get the proof.
\end{proof}

\begin{prop} (\cite[Prop. 8]{KN})\\
\label{KNprop8}
Let $X\subset \R^n$ be a real algebraic set and let $f\in\SR_0(X)$.
For any irreducible real algebraic subset $W\subset X$ not contained
in the singular locus of $X$, we have 
$$f|_{W}\in \SR_0(W).$$
\end{prop}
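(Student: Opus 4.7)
The continuity of $f|_W$ is immediate from that of $f$, so the content of the proposition is the rationality of $f|_W$: one must exhibit a dense Zariski open $V \subseteq W$ on which $f|_W$ coincides with a regular function on $V$.

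The easy case is $W \not\subset \pol(f)$: then $V := W \cap \dom(f)$ is a non-empty, hence dense, Zariski open subset of the irreducible variety $W$, and $f|_V$ is the restriction to the algebraic subset $V$ of the regular function $f|_{\dom(f)}$, so it is regular on $V$. This gives a regular presentation of $f|_W$, whence $f|_W \in \SR_0(W)$.

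The interesting case is $W \subset \pol(f)$, and here the plan is to exploit the hypothesis $W \not\subset \Sing(X)$ via a resolution-of-indeterminacy argument. Pick a general point $p \in W \cap X_{reg}$, non-empty by hypothesis and Zariski open (as the complement in $W$ of the closed set $\Sing(X)\cap W$), hence dense in $W$ by irreducibility. Using Hironaka's desingularization, take a sequence of blow-ups $\pi : \tilde X \to X$ along smooth centers resolving first $\Sing(X)$ and then the indeterminacy of $f$, so that $\tilde X$ is smooth and $\tilde f := f \circ \pi$ is a rational continuous function on $\tilde X$ whose polar locus is strictly smaller in dimension (in favorable situations, empty). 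Since a dense Zariski open of $W$ lies in the isomorphism locus $X_{reg}$ of $\pi$, the strict transform $\tilde W$ of $W$ in $\tilde X$ is birational to $W$ via $\pi$. Now set up an induction on a well-founded parameter such as $\dim \pol(f)$ (or the pair $(\dim X, \dim\pol(f))$ lexicographically) to conclude that $\tilde f|_{\tilde W} \in \SR_0(\tilde W)$, and transport this back to $W$ via the birational map $\tilde W \to W$.

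The main obstacle is verifying that the resolution strictly reduces the chosen inductive parameter while preserving the non-inclusion in the singular locus at each step, so that the inductive hypothesis can be reapplied to $\tilde W \subset \tilde X$. A more elementary alternative, avoiding Hironaka, is to pick the local expression $f = g/h$ with $g, h$ coprime in the regular local ring $\SO_{X,p}$ (a UFD at the smooth point $p$), observe that the continuity of $f$ at every point of $W$ near $p$ forces the set-theoretic inclusion $V(h) \subseteq V(g)$ locally, and use this --- together with the fact that $W$, being a real algebraic subset, is determined by the ideal of polynomials vanishing set-theoretically on it --- to extract a new rational expression for $f$ defined on a dense Zariski open of $W$.
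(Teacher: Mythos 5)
The statement is quoted in the paper from Koll\'ar--Nowak \cite[Prop.~8]{KN} without proof, so there is no internal argument of the paper to compare with; what matters is whether your reconstruction stands on its own. The easy case $W \not\subset \pol(f)$ is correct and complete: $W \cap \dom(f)$ is a non-empty, hence dense, Zariski open subset of the irreducible set $W$ on which $f$ restricts to a regular function, and continuity is inherited, so $f|_W \in \SR_0(W)$.

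The case $W \subset \pol(f)$, which carries all the content of the proposition, is not actually proved. Your blow-up plan breaks down at exactly the point you flag yourself: once you pass to resolving the indeterminacy of $f$, the centers lie inside $\pol(f)$, and since $W \subset \pol(f)$ the subvariety $W$ may itself be contained in a center, in which case its strict transform is empty and the claim that ``$\tilde W$ is birational to $W$ via $\pi$'' fails precisely where it is needed. One would then have to replace the strict transform by a rational section of the exceptional fibration over $W$, check that the resolved morphism to $\PP^1$ agrees with $f$ composed with the blow-down on all real points (this uses smoothness of the resolved space) and avoids $\infty$ there, and exhibit a genuinely decreasing induction parameter --- none of which is carried out. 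The ``elementary alternative'' is weaker still: the set-theoretic inclusion $\Z(h) \subseteq \Z(g)$ forced by continuity does not by itself produce a rational expression for $f|_W$. Note that Example \ref{exemple5} of the paper ($X = \Z(x^3-(1+z^2)y^3)$, $f = x/y$, $W$ the $z$-axis, $f|_W = (1+z^2)^{1/3}$) shows the conclusion genuinely fails when $W \subset \Sing(X)$, so any correct argument must use the regularity of the local rings of $X$ at generic points of $W$ in an essential, non-formal way; your sketch does not explain how. As it stands the proposal identifies the right strategy (that of \cite{KN}) but leaves its crux unproved.
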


\begin{thm} (\cite[Prop. 8, Thm. 10]{KN})\\
\label{KolNow}
Let $X\subset \R^n$ be a smooth real algebraic
set. Then the map $\phi^0_0:\SR^0(X)\hookrightarrow
\SR_0(X)$ is an isomorphism.
\end{thm}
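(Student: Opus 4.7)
The plan is to prove surjectivity of $\phi^0_0$ (injectivity is already given by Proposition \ref{inclusion1}) by a finite descending induction along a chain of polar loci, using Proposition \ref{KNprop8} on the descent and Lemma \ref{extension} on the ascent. Given $f\in\SR_0(X)$, set $X_0=X$ and $f_0=f$, and inductively define $X_{i+1}=\pol(f_i)\subsetneq X_i$ and $f_{i+1}=f_i|_{X_{i+1}}=f|_{X_{i+1}}$. Since by definition $\dim\pol(f_i)<\dim X_i$, the chain strictly decreases in dimension and terminates at some stage $m$ with $\pol(f_m)=\emptyset$, i.e.\ $f_m$ is regular on $X_m$. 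By \cite[Prop.~3.2.3]{BCR} this regular function extends to a regular (hence regulous) function on $\R^n$, so $f_m\in\SR^0(X_m)$.

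The first substantive task is to check that $f_i\in\SR_0(X_i)$ at every stage, so that $\pol(f_i)$ and Lemma \ref{extension} are legitimate to invoke. Decompose $X_i$ into its irreducible components; each component $V$ is an irreducible real algebraic subset of $X$, and by smoothness of $X$, $V$ is not contained in the singular locus of $X$ (which is empty). Proposition \ref{KNprop8} applied to $f\in\SR_0(X)$ and $V$ then gives $f|_V\in\SR_0(V)$. Gluing across the finite closed cover of $X_i$ by irreducible components yields continuity of $f_i$ on $X_i$, and the union of the regularity opens on each component (shrunk to avoid the pairwise intersections) is a dense Zariski open of $X_i$ on which $f_i$ is regular, so $f_i\in\SR_0(X_i)$.

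The second task is to climb back up. Having obtained $f_m\in\SR^0(X_m)$, apply Lemma \ref{extension} to $f_{m-1}\in\SR_0(X_{m-1})$ with polar locus $X_m=\pol(f_{m-1})$ and restriction $f_{m-1}|_{X_m}=f_m\in\SR^0(X_m)$ to conclude $f_{m-1}\in\SR^0(X_{m-1})$. Iterating Lemma \ref{extension} downward through $i=m-1,m-2,\ldots,0$ delivers $f=f_0\in\SR^0(X)$.

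The only place smoothness of $X$ intervenes is the first step, and that is the real content of the theorem. Without smoothness, an irreducible component of some $X_{i+1}$ could lie in the singular locus of $X$, which would block Proposition \ref{KNprop8} for that component and cause the hereditary rational continuity of $f$ along the chain to collapse; this is exactly the phenomenon illustrated by Example \ref{exemple5}. With $X$ smooth the singular locus is empty, every subvariety met along the descent inherits rational continuity from $f$, and the Lemma \ref{extension} climb is then essentially mechanical.
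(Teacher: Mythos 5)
Your proof is correct, but it does not follow the route the paper takes for this particular statement. The paper's proof of Theorem \ref{KolNow} is a two-line citation: \cite[Prop.~8]{KN} gives that a rational continuous function on a smooth real algebraic set is hereditarily rational, and \cite[Thm.~10]{KN} is then invoked as a black box to conclude that a continuous hereditarily rational function is the restriction of a rational continuous function on $\R^n$, i.e.\ is regulous. You instead replace the appeal to \cite[Thm.~10]{KN} by the polar-chain descent combined with Lemma \ref{extension}; this is precisely the argument the paper deploys immediately afterwards to prove the more general Theorem \ref{isole} (isolated singularities), and in the smooth case the only nontrivial hypothesis there --- that the irreducible subvarieties met along the chain are not contained in $\Sing(X)$ --- is vacuous, so your proof is in effect that argument specialized. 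What your route buys is self-containedness: everything reduces to Proposition \ref{KNprop8} and to Lemma \ref{extension}, which the paper derives from the proof of \cite[Prop.~11]{KN}, so \cite[Thm.~10]{KN} is never needed. The supporting details you supply are all sound: injectivity is Proposition \ref{inclusion1}; $f_i=f|_{X_i}$ lies in $\SR_0(X_i)$ by applying Proposition \ref{KNprop8} to each irreducible component of $X_i$ and gluing the regularity loci over the finite closed cover; the chain terminates because $\dim\pol(f_i)<\dim X_i$; the bottom regular function extends to $\R^n$ by \cite[Prop.~3.2.3]{BCR}; and the climb back up is a repeated application of Lemma \ref{extension}.
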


\begin{proof}
By \cite[Prop. 8]{KN}, a rational continuous function on a smooth real
algebraic set is hereditarily rational. By \cite[Thm. 10]{KN}, a
continuous hereditarily rational function on a non necessary smooth
real algebraic set $X\subset \R^n$ is the restriction of a rational
continuous function on $\R^n$ and thus ``continuous hereditarily
rational'' means ``regulous''.
\end{proof}

We extend the result of Theorem \ref{KolNow} to real algebraic sets
with isolated singularities using Lemma \ref{extension}.

\begin{thm}
\label{isole}
Let $X\subset \R^n$ be a real algebraic set with only isolated singularities. Then 
$$\SR^0(X)=\SR_0(X).$$
\end{thm}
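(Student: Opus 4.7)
The plan is to iterate Lemma \ref{extension} along a descending chain of polar loci inside $X$, using the finiteness of $\Sing(X)$ together with Proposition \ref{KNprop8} to control the rational-continuous property at each stage. By Proposition \ref{inclusion1} we already have $\SR^0(X)\subseteq \SR_0(X)$, so only the reverse inclusion needs proof. Fix $f\in \SR_0(X)$.

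I would define a decreasing chain of Zariski closed subsets of $X$ by $W_0=X$ and, as long as $f|_{W_i}\in \SR_0(W_i)$, $W_{i+1}=\pol(f|_{W_i})$, the polar locus of $f|_{W_i}$ regarded as a rational function on $W_i$. Since $\dim W_{i+1}<\dim W_i$, the chain terminates after at most $\dim X$ steps in some $W_m$ which is either empty or a finite set of points.

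The inductive claim is that $f|_{W_i}\in \SR_0(W_i)$ for every $i\le m$. The base case $i=0$ is the hypothesis on $f$. For the induction step, $W_{i+1}$ decomposes into finitely many irreducible components. Any component $Y$ of positive dimension is an irreducible real algebraic subset of $X$ which, since $\Sing(X)$ is finite, cannot be contained in $\Sing(X)$; Proposition \ref{KNprop8} applied to $f$ on $X$ and $Y$ then gives $f|_Y\in \SR_0(Y)$. Zero-dimensional components are single points where $f$ is a constant. Patching these regular presentations on the union of the positive-dimensional components minus their pairwise intersections produces a dense Zariski open subset of $W_{i+1}$ on which $f|_{W_{i+1}}$ is regular, so $f|_{W_{i+1}}\in \SR_0(W_{i+1})$.

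The terminal restriction $f|_{W_m}$ lies in $\SR^0(W_m)$: trivially if $W_m=\emptyset$, or by Lagrange interpolation if $W_m$ is a nonempty finite set, since any function on a finite subset of $\R^n$ is the restriction of a polynomial. One then climbs back up the chain: for $i=m-1,m-2,\ldots,0$, Lemma \ref{extension} applied to $f|_{W_i}\in \SR_0(W_i)$, whose polar locus in $W_i$ equals $W_{i+1}$ and whose restriction to $W_{i+1}$ is the already established $f|_{W_{i+1}}\in \SR^0(W_{i+1})$, yields $f|_{W_i}\in \SR^0(W_i)$. At $i=0$ this gives $f\in \SR^0(X)$. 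The hardest part, and the only place where the isolated-singularity hypothesis is used, is the downward step: Proposition \ref{KNprop8} must be invoked relative to $X$ itself (we have no control on $\Sing(W_i)$ for $i\ge 1$), and the condition on the irreducible component $Y$ is automatic precisely because $\Sing(X)$ is finite.
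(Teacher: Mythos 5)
Your proposal is correct and follows essentially the same route as the paper: both establish that $f$ restricts to a rational continuous function on every real algebraic subset (using Proposition \ref{KNprop8} on positive-dimensional irreducible components, which is where the isolated-singularity hypothesis enters, and triviality at points), descend through the chain of polar loci until reaching a finite or empty set where the restriction is regular, and then climb back up with repeated applications of Lemma \ref{extension}. Your treatment of the reducible case by patching regular presentations on the components is just a more explicit version of the paper's one-line assertion that the restriction is rational continuous ``without hypothesis on $W$.''
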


\begin{proof}
Let $f\in \SR_0(X)$. Let $W\subset X$ be a real algebraic subset. If
$\dim W=0$ then $f|_W$ is regular and thus $f|_W\in\SR_0(W)$. If $W$
is irreducible and $\dim W\geq 1$ then $f|_W\in\SR_0(W)$ by
Proposition \ref{KNprop8}. It follows that $f|_W\in\SR_0(W)$ without
hypothesis on $W$. We consider the
following sequence of continuous rational functions 
$$(f_0=f,f_1=f|_{\pol(f)},f_2=(f_1)|_{\pol(f_1)},\ldots)$$
on a sequence of real algebraic subsets $(W_i=\pol(f_{i-1}))$ of $X$ of dimension strictly
decreasing and included one in another. There exists an integer $m$
such that $f_m$ is regular on $W_m$. Using
several times Lemma \ref{extension}, we get that $f\in\SR^0(X)$.
\end{proof}

\begin{cor}
\label{courbe}
Let $X\subset \R^n$ be a real algebraic curve. Then 
$$\SR^0(X)=\SR_0(X).$$
\end{cor}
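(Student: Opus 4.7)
The plan is to reduce immediately to Theorem \ref{isole}. Since $X$ is a real algebraic curve, $\dim X = 1$, so it suffices to observe that the singular locus $\Sing(X)$ is a proper Zariski closed subset of $X$, hence satisfies $\dim \Sing(X) < \dim X = 1$, i.e.\ $\dim \Sing(X) \leq 0$. Consequently $\Sing(X)$ is a finite set of points and $X$ has only isolated singularities.

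Once this observation is made, I would invoke Theorem \ref{isole} directly to conclude $\SR^0(X) = \SR_0(X)$. There is essentially no obstacle: the only thing to justify is that $\Sing(X)$ is Zariski closed of dimension strictly less than that of $X$, which is standard (see for instance \cite[Prop. 3.3.14]{BCR}, or simply that the singular locus is cut out by the vanishing of appropriate minors of the Jacobian and cannot coincide with $X$ on any irreducible component of positive dimension). Since everything below dimension $1$ is $0$-dimensional, we are done.

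Hence the proof amounts to two lines: dimension count to show singularities are isolated, then apply Theorem \ref{isole}.
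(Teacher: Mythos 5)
Your proof is correct and is exactly the argument the paper intends: the corollary is stated as an immediate consequence of Theorem \ref{isole}, the only point being that $\dim\Sing(X)<\dim X=1$ forces the singularities of a curve to be isolated. Nothing further is needed.
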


\subsection{Blow-regular functions and arc-analytic functions on central real algebraic sets} 

In this section, we compare different classes of functions on a
(central) real
algebraic set: regulous functions, rational continuous functions,
blow-regular functions, arc-analytic functions.

By \cite[thm. 3.11]{FHMM}, regulous functions on a smooth real algebraic
set $X\subset \R^n$ coincide with
blow-regular functions on $X$, it gives another equivalent
definition for regulous functions on $X$.

\begin{defn}
\label{def2lisse} 
Let $X\subset\R^n$ be a smooth real algebraic set.
Let $f:X\rightarrow \R$ be a real function.
We say that $f$ is
regular after blowings-up on $X$ or $f$ is blow-regular on $X$ if there exists a 
composition $\pi:M\to X$ of successive blowings-up along smooth
centers such that $f\circ \pi$ is regular on $M$.
We denote by $\BR (X)$ the ring of blow-regular functions of $X$.
\end{defn}

\begin{thm} \cite[thm. 3.11]{FHMM}\\
Let $X\subset\R^n$ be a smooth real algebraic set. We have $\SR^0(X)=\BR(X)$.
\end{thm}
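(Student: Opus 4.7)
The plan is to prove the two inclusions $\BR(X)\subseteq \SR^0(X)$ and $\SR^0(X)\subseteq \BR(X)$ separately, with the first being essentially formal and the second relying on resolution of indeterminacies.

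For the easy direction $\BR(X)\subseteq \SR^0(X)$, let $f\in\BR(X)$ with $\pi:M\to X$ a composition of blowings-up along smooth centers such that $f\circ\pi$ is regular on $M$. Since blowings-up along smooth centers are birational, $\pi$ restricts to an isomorphism over some dense Zariski open subset $U\subseteq X$; on $U$ we can write $f|_U=(f\circ\pi)|_{\pi^{-1}(U)}\circ(\pi|_{\pi^{-1}(U)})^{-1}$, which is regular. Hence $f$ is rational. For continuity, I would use that $\pi$ is a proper surjection (blowings-up are proper and $f\circ\pi$ is defined on all of $M$), so $\pi$ is a topological quotient map; thus continuity of $f\circ\pi$ forces continuity of $f$. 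This yields $f\in\SR^0(X)$.

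For the harder direction $\SR^0(X)\subseteq \BR(X)$, take $f\in\SR^0(X)$ and, using a regular presentation on $\dom(f)$, write $f=p/q$ with $p,q$ regular on $X$ (possible since $X$ is smooth; up to shrinking this can be arranged, cf. \cite[Prop. 3.2.3]{BCR}). I would apply Hironaka's principalization of ideals: there exists a composition $\pi:M\to X$ of blowings-up along smooth centers such that the pulled-back ideal $(p\circ\pi,q\circ\pi)\subset\SO(M)$ is locally principal. Locally on $M$ we then have $p\circ\pi=m\tilde p$ and $q\circ\pi=m\tilde q$ with $m$ a local generator and $\tilde p,\tilde q$ having no common zero. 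Consequently $f\circ\pi=\tilde p/\tilde q$ defines a morphism $F:M\to\PP^1(\R)$ that agrees with $f\circ\pi$ wherever the latter is defined.

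The main obstacle is the last step: showing that $F$ actually lands in $\R\subset\PP^1(\R)$, i.e. that $\tilde q$ is nowhere zero, so that $f\circ\pi$ is genuinely regular on $M$. Here I would use the continuity of $f$ on $X$. Suppose for contradiction that $\tilde q(y)=0$ at some $y\in M$; then $\tilde p(y)\neq 0$, and in any neighborhood of $y$ the values of $\tilde p/\tilde q$ are unbounded. But $f\circ\pi$ is bounded near $y$ because $f$ is continuous at $\pi(y)\in X$ and $\pi$ is continuous, giving a contradiction. Hence $\tilde q$ never vanishes, $F$ takes values in $\R$, and $f\circ\pi\in\SO(M)$, proving $f\in\BR(X)$. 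Invoking the (real) Hironaka theorem in the needed form is the substantive ingredient; the rest is bookkeeping.
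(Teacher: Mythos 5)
The paper states this result purely as a citation of \cite[Thm.\ 3.11]{FHMM} and supplies no proof of its own, so there is nothing internal to compare against; your argument is essentially the original one from that reference (write $f=p/q$, principalize the ideal $(p,q)$ by blowings-up along smooth centers, then use continuity of $f$ to rule out zeros of $\tilde q$ via unboundedness), and it is correct. Two small points you should make explicit: in the easy direction, passing from ``rational and continuous on $X$'' to ``$f\in\SR^0(X)$'' uses the Koll\'ar--Nowak theorem $\SR^0(X)=\SR_0(X)$ for smooth $X$ (the paper's definition of $\SR^0(X)$ is by restriction from $\R^n$); and in the unboundedness argument you need points arbitrarily close to $y$ at which both $\tilde q$ and the local generator $m$ are nonzero, so that $f\circ\pi$ genuinely equals $\tilde p/\tilde q$ there --- this holds because zero sets of not-identically-zero regular functions on a smooth real variety have empty interior.
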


We establish a connection between regulous functions and arc-analytic
functions introduced in \cite{KK}. A function $f:X\rightarrow\R$,
defined on a real analytic variety $X$, is said to be arc-analytic if
$f\circ\gamma$ is analytic for every analytic arc $\gamma:I\rightarrow
X$ where $I$ is an open interval in $\R$.
\begin{prop}
\label{arcana}
Let $X\subset\R^n$ be a real algebraic set. A regulous function on $X$
is arc-analytic.
\end{prop}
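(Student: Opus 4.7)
The plan is to reduce first to the smooth ambient case. By Definition \ref{def1X} the given $f\in\SR^0(X)$ is the restriction $\hat{f}|_X$ of some $\hat{f}\in\SR^0(\R^n)$, and any analytic arc $\gamma:I\to X$ is in particular an analytic arc $\gamma:I\to\R^n$ whose image happens to lie in $X$. Since $f\circ\gamma=\hat{f}\circ\gamma$, it suffices to prove that every regulous function on $\R^n$ is arc-analytic on $\R^n$.

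To treat this reduced statement, I would invoke the theorem recalled just before the proposition (\cite[Thm. 3.11]{FHMM}): on a smooth real algebraic set such as $\R^n$ one has $\SR^0(\R^n)=\BR(\R^n)$. Thus there exists a composition $\pi:M\to\R^n$ of blowings-up along smooth centers such that $\hat{f}\circ\pi$ is regular, hence real analytic, on the smooth manifold $M$. The central analytic step is then to lift $\gamma$ through $\pi$: one produces an analytic arc $\tilde\gamma:I\to M$ with $\pi\circ\tilde\gamma=\gamma$ via the classical arc-lifting lemma for proper birational morphisms between smooth real analytic manifolds (a composition of blowings-up along smooth centers is such a morphism). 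Once the lift is available, $(\hat{f}\circ\pi)\circ\tilde\gamma$ exhibits $f\circ\gamma$ as a composition of two real analytic maps on $I$, and the proposition follows.

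The main technical point is this arc-lifting step. It is well established in the real analytic setting but is not entirely formal: $\gamma$ may meet the successive blown-up centers at isolated parameters of $I$, and one must verify that the meromorphic lift defined off these parameters extends analytically across them. This is the only ingredient where genuine work would be needed were one to reprove things from scratch. An alternative route would be induction on $\dim X$, splitting according to whether $\gamma(I)$ is contained in the polar locus $\pol(\hat{f})$ or not and using, in the negative case, the elementary fact that a continuous function coinciding off a discrete set with the quotient of two real analytic functions is itself analytic; however this induction has a bookkeeping nuisance when $X$ itself is contained in $\pol(\hat{f})$, which the blow-up approach sidesteps cleanly.
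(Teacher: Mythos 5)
Your argument is correct and follows essentially the same route as the paper: restrict attention to an extension $\hat{f}\in\SR^0(\R^n)$, use \cite[Thm.~3.11]{FHMM} to realize $\hat{f}$ as a blow-regular function, and conclude by lifting analytic arcs through the composition of blowings-up. The only difference is that the paper outsources your ``main technical point'' (the arc-lifting step) entirely to \cite[Thm.~1.1]{BM}, after noting via Proposition~\ref{s-a1} that $\hat{f}$ is semi-algebraic, rather than sketching the lifting argument itself.
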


\begin{proof}
Let $f\in\SR^0(X)$. By definition, there exists
$\hat{f}\in\SR^0(\R^n)$ such that $\hat{f}|_X=f$. By Proposition
\ref{s-a1} and \cite[thm. 3.11]{FHMM}, $\hat{f}$ is a semi-algebraic
blow-regular function on $\R^n$. It follows from \cite[Thm. 1.1]{BM},
that $\hat{f}$ is an arc-analytic function and therefore $f$ also.
\end{proof}

Now we will give a definition of blow-regular
function on a non-necessarily smooth real algebraic set.

\begin{defn}
\label{defB1}  Let $X\subset\R^n$ be a real algebraic set. Let $\BR
  (X)$ denote the ring of real functions $f$ defined on
  $X$ such that, there exists a resolution of singularities
  $\pi:\tilde{X}\rightarrow X$ (a proper birational regular map such
  that $\tilde{X}$ is smooth) such that the composite $f\circ\pi$ is in
  $\BR(\tilde{X})=\SR^0(\tilde{X})=\SR_0(\tilde{X})$. A $f\in \BR
  (X)$ is called a ``blow-regular function'' on $X$.
\end{defn}

\begin{rem} 
\label{defB2} According to the definition of blow-regular function on a smooth
  variety we get: $f\in \BR
  (X)$ if and only if $f$ is a real function defined on
  $X$ such that there exists a resolution of singularities
  $\pi:\tilde{X}\rightarrow X$ such that $f\circ \pi$ is regular. This
  justifies the notation ``blow-regular''.
\end{rem}

\begin{rem} In the definition \ref{defB1} we can change $\exists$ by
  $\forall$. It is not true in the equivalent definition of the remark
  \ref{defB2}.
\end{rem}

We prove in the following that, even in the central case,
blow-regular functions and rational continuous functions coincide.

\begin{prop}
\label{equivalencesing}
Let $X\subset\R^n$ be a central real algebraic set. We have
$$\BR (X)=\SR_0(X).$$
\end{prop}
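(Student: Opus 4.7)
The plan is to prove both inclusions, leveraging that on any smooth resolution $\pi:\tilde X\to X$ we have $\SR_0(\tilde X)=\SR^0(\tilde X)=\BR(\tilde X)$ by Theorem \ref{KolNow} and \cite[Thm.~3.11]{FHMM}. This collapse is what makes the definition of $\BR(X)$ via a singular model equivalent to a purely rational-continuous condition.

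Both inclusions share a common rationality step. Since any resolution $\pi:\tilde X\to X$ is birational, there is a Zariski-dense open $V\subseteq\tilde X$ mapping isomorphically onto a Zariski-dense open $W=\pi(V)\subseteq X$. For $\SR_0(X)\subseteq\BR(X)$: given $f\in\SR_0(X)$, fix any resolution $\pi$ (Hironaka) and shrink $V$ so that $W\subseteq\dom(f)$; then $(f\circ\pi)|_V=f|_W\circ\pi|_V$ is regular on the Zariski-dense open $V$, and together with continuity of $f\circ\pi$ on $\tilde X$ this puts $f\circ\pi\in\SR_0(\tilde X)=\BR(\tilde X)$, so $f\in\BR(X)$ by Definition \ref{defB1}. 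For $\BR(X)\subseteq\SR_0(X)$: given $f\in\BR(X)$ with witnessing resolution $\pi$, we have $f\circ\pi\in\SR_0(\tilde X)$; shrinking $V$ so that $V\subseteq\dom(f\circ\pi)$, the identity $f|_W=(f\circ\pi)|_V\circ(\pi|_V)^{-1}$ shows that $f|_W$ is regular on the Zariski-dense open $W\subseteq X$, hence $f$ is rational on $X$.

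Continuity of $f$ in the second direction is the step where the centrality hypothesis intervenes, and I expect it to be the only real obstacle. By Lemma \ref{contreg}, $X_{reg}\subseteq\overline{W}^{eucl}$, and centrality upgrades this to $X=\overline{X_{reg}}^{eucl}\subseteq\overline{W}^{eucl}$, so $W$ is Euclidean-dense in $X$. Since $\pi$ is proper, $\pi(\tilde X)$ is Euclidean-closed in $X$ and contains $W$, forcing $\pi(\tilde X)=X$; thus $\pi$ is surjective. Properness also makes $\pi$ a closed map, so for every closed $K\subseteq\R$ the identity $f^{-1}(K)=\pi((f\circ\pi)^{-1}(K))$ (which uses surjectivity together with continuity of $f\circ\pi$) exhibits $f^{-1}(K)$ as closed in $X$. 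Hence $f$ is continuous, and $f\in\SR_0(X)$. Without centrality this last step fails exactly as in Example \ref{exemple1}: a resolution may miss an isolated point, and the values of $f$ on the missing locus are invisible to $f\circ\pi$, which is what prevents $\BR(X)\to\SR_0(X)$ from being surjective on the nose in the non-central case.
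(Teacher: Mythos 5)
Your proof is correct and follows essentially the same route as the paper: rationality of $f$ (resp.\ $f\circ\pi$) via the birational locus, and for the inclusion $\BR(X)\subseteq\SR_0(X)$ the use of centrality to get surjectivity of the proper map $\pi$, which together with closedness of $\pi$ makes it a quotient map — your explicit computation $f^{-1}(K)=\pi((f\circ\pi)^{-1}(K))$ is just that quotient-map argument unpacked. The only cosmetic difference is that you derive surjectivity from Euclidean density of the birational locus (via Lemma \ref{contreg}) plus closedness of the image, whereas the paper argues pointwise via local dimension and \cite[Prop.~7.6.2]{BCR}; both hinge on centrality in the same way.
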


\begin{proof}
Assume $f\in\SR_0(X)$ and let $\pi:\tilde{X}\rightarrow X$ be a
resolution of singularities. Then clearly $f\circ\pi$ is rational on
$\tilde{X}$. Since $\pi^{-1}(X)=\tilde{X}$ (see below) then we can
conclude that $f\circ \pi$ is continuous on $\tilde{X}$ and thus
$f\circ \pi\in \SR_0(\tilde{X})$.

Assume $f\in \BR (X)$ and let $\pi:\tilde{X}\rightarrow X$ be a
resolution of singularities. Then $f\circ \pi \in \SR_0(\tilde{X})$
and thus $f$ is rational on $X$. The function $f$ is continuous on $X$
since:\\
$\bullet$ The fibres of $\pi$ are non-empty i.e $\pi$ is surjective.
Indeed if $\pi^{-1}(x)=\emptyset$ for a $x\in X$ then $\dim X_x<\dim
X$ ($\dim X_x$ is the local dimension of $X$ at $x$
\cite[Def. 2.8.12]{BCR}) and thus $x\not\in \overline{X_{reg}}^{eucl}$
by \cite[Prop. 7.6.2]{BCR}, this contradicts our assumption that $X$
is central. The surjectivity of $\pi$ can also be deduced from
\cite[Thm. 2.6, Cor. 2.7]{KK}.\\
$\bullet$ The function $f\circ\pi$ is continuous on $\tilde{X}$.\\
$\bullet$ The function $f\circ\pi$ is
constant on the fibers of $\pi$.\\
In fact, the ``central'' condition forces the strong topology on $X$
to be the quotient topology induced by
the strong topology on $\tilde{X}$. Indeed, $\pi$ is a proper (and
thus closed) surjective map and thus a quotient map.
\end{proof}

The next example illustrates the fact that the assumption that $X$ is central cannot be dropped in the previous
proposition. In general we only have $\SR_0(X)\subset \BR (X)$.
\begin{ex}
{\rm We consider the real algebraic surface introduced in
  \cite[Ex. 6.10]{FHMM}. Let $X$ be the algebraic subset of $\R^4$
  defined by $X=\Z((x+2)(x+1)(x-1)(x-2)+y^2)\cap \Z(u^2-xv^2)$. The
  set $X$ has two connected components $W$ and $Z$, $W$ has dimension
  two and $W=\overline{X_{reg}}^{eucl}$, $Z$ has dimension one and
  $Z=\Z(((x+2)(x+1)(x-1)(x-2)+y^2)^2+u^2+v^2)\cap
  \{(x,y,u,v)\in\R^4|\,x<0\}$. Let $f$ be the real function defined on
  $X$ by $f(x,y,u,v)=\begin{cases}
    \dfrac{1}{(x+1)^2+y^2+u^2+v^2}\,\,{\rm
      if}\,\,(x,y,u,v)\not=(-1,0,0,0)\\
    0\,\,{\rm if}\,\,(x,y,u,v)=(-1,0,0,0) \end{cases}$ Remark that $f$
  is not continuous at the point $(-1,0,0,0)$ and is regular on $W$. Let $\pi:\tilde{X}\rightarrow X$ be a
resolution of singularities. Since $\pi^{-1}(Z)=\emptyset$ then
$f\circ\pi$ will be regular on $\tilde{X}$ and thus $f\in\BR
(X)\setminus\SR_0(X)$.}
\end{ex}

By Propositions \ref{s-a2} and \ref{equivalencesing}, a rational
continuous function on a central real algebraic set is a
semi-algebraic blow-regular function like a regulous function. However,
unlike a regulous function, it is not difficult to see that it can happen that a rational continuous
function on a central real algebraic set is not arc-analytic. The
following example is due to G. Fichou.

\begin{ex}
\label{ratcontnotarcana}
{\rm Consider the function $f=\dfrac{x}{y}$ defined on the real
  algebraic set $X=\Z(x^3-zy^3)\subset\R^3$. Then $X$ is central with
  singular subset given by the $z$-axis. The function $f$ is regular
  outside the $z$-axis and can be extended continuously on the
  $z$-axis by the function $z^{1/3}$. It is clear that the (new)
  function $f$ is not arc-analytic since the image by $f$ of the
  analytic arc $t\mapsto (0,0,t)$ is not analytic. The function $f$
  becomes arc-analytic after resolution of singularities of $X$ by
  Proposition \ref{arcana} but it does not imply that $f$ is also
  arc-analytic. The reason is that some analytic arcs on $X$ can not
  be lifted as analytic arcs when we solve the singularities of $X$.}
\end{ex}

\section{Algebraically constructible functions}

We make reminders on the theory of constructible and algebraically
constructible functions due to C. McCrory and A. Parusi\'nski (see
\cite{MP1}, \cite{MP2}). This theory was remarkably used to study the topology of singular
real
algebraic sets. We
follow the definitions and the results given in \cite{C}.

Let $S$ be a semi-algebraic set. A constructible function on $S$ is a
function $f:S\rightarrow \Ze$ that can be written as a finite sum
$$\varphi=\sum_{i\in I} m_i \un_{S_i}$$
where for each $i\in I$, $m_i$ is an integer and $\un_{S_i}$ is the
characteristic function of a semi-algebraic subset $S_i$ of $S$.
The set of constructible functions on $S$ provided with the sum and
the product form a commutative ring denoted by $\F(S)$.
If $\varphi=\sum_{i\in I} m_i \un_{S_i}$ is a constructible function
then the Euler integral of $\varphi$ on $S$ is 
$$\int_S\varphi\, d\chi=\sum_{i\in I} m_i \chi(S_i)$$
where $\chi$ is the Euler characteristic with compact support.
Let $f:S\rightarrow T$ be a continuous semi-algebraic map between
semi-algebraic sets and
$\varphi\in \F(S)$. The pushforward $f_*\varphi$ of $\varphi$ along $f$
is the function from $T$ to $\Ze$ defined by 
$$f_*\varphi (y)=\int_{f^{-1}(y)}\varphi\, d\chi.$$
It is known that $f_*\varphi\in \F(T)$ and that $f_*:\F(S)\rightarrow
\F(T)$ is a morphism of additive groups.

Let $X\subset \R^n$ be a real algebraic set.
We say that a constructible function $\varphi$ on $X$ is algebraically
constructible if it can be written as a finite sum 
$$\varphi=\sum_{i\in I} m_i f_{i*}(\un_{X_i})$$
where $f_i$ are regular maps from real algebraic sets $X_i$ to $X$.
Algebraically constructible functions on $X$ form a subring, denoted by
$\A(X)$, of $\F(X)$. We say that a constructible function $\varphi$ on $X$ is
strongly algebraically
constructible if it can be written as a finite sum 
$$\varphi=\sum_{i\in I} m_i \un_{X_i}$$
where $X_i$ are real algebraic subsets of $X$.
Strongly algebraically constructible functions on $X$ form a subring
of $\A(X)$ denoted by
$\AS(X)$.

Let $A$ be a ring of semi-algebraic functions on $X$. 
For $f\in A$, we define the sign function associated to $f$
as $$\sign(f):X\rightarrow \{-1,0,1\}$$
$$x\mapsto \sign(f)(x)=\begin{cases} -1\,\,{\rm if}\,\,f(x)<0\\
0\,\,{\rm if}\,\,f(x)=0\\
1\,\,{\rm if}\,\,f(x)>0\\ \end{cases}$$
Let $f\in A$, we have
$\sign(f)\in\F(X)$ since $f$ is a semi-algebraic function (the inverse image of a semi-algebraic
set by a semi-algebraic map is a semi-algebraic set \cite[Prop. 2.2.7]{BCR1}).
Following \cite{ABR}, we say that two $n$-tuples $<f_1,\ldots,f_n>$ and $<h_1,\ldots,h_n>$ of elements
of $A$ are
equivalent, and we write $<f_1,\ldots,f_n>\simeq <h_1,\ldots,h_n>$, if 
$$\forall x\in X,\,\,\sign(f_1(x))+\cdots+\sign(f_n(x))=\sign(h_1(x))+\cdots+\sign(h_n(x)).$$
A (quadratic) form over $A$ is
an equivalence class of a $n$-tuple for this relation. If $\rho$ is the class of
the $n$-tuple $<f_1,\ldots,f_n>$, we simply write
$\rho=<f_1,\ldots,f_n>$ and $n$ is called the dimension of $\rho$ and
denoted by $\dim(\rho)$.
For two forms $<f_1,\ldots,f_n>$ and $<g_1,\ldots,g_m>$ over $A$, we
define the sum  (denoted by $\perp$) and the product (denoted by  $\otimes$):
$$<f_1,\ldots,f_n>\perp
<g_1,\ldots,g_m>=<f_1,\ldots,f_n,g_1,\ldots,g_m>,$$
$$<f_1,\ldots,f_n>\otimes
<g_1,\ldots,g_m>=<f_1g_1,\ldots,f_ng_1,f_1g_2,\ldots,f_ng_2,\ldots,f_ng_m>.$$
We call two forms $<f_1,\ldots,f_n>$ and $<g_1,\ldots,g_m>$ over $A$
similar, and write $$<f_1,\ldots,f_n>\sim <g_1,\ldots,g_m>,$$ if 
$$\forall x\in
X,\,\,\sign(f_1(x))+\cdots+\sign(f_n(x))=\sign(g_1(x))+\cdots+\sign(g_m(x)).$$
With the operations $\perp$ and $\otimes$, the set of similarity
classes of forms is a ring called the reduced Witt ring of degenerate
forms over $A$, we will denote it by $\Witt(A)$.
The form $\rho$ is called isotropic if there is a form $\tau$ with
$\rho\sim\tau$ and $\dim(\rho)>\dim(\tau)$. Otherwise, $\rho$ is
called anisotropic. The form $<0>$ is considered isotropic.

Since $A$ is a ring of semi-algebraic functions on $X$, we have a signature map $$\Lambda: \Witt(A)\rightarrow
F(X)$$ $$<f_1,\ldots,f_n>\mapsto \,\,
\sign(f_1)+\cdots+\sign(f_n)$$
which is a ring morphism. The signature map is clearly injective by
definition of similarity for forms.

Parusi\'nski and Szafraniec haved proved that algebraically constructible
functions correspond to sums of signs of polynomial functions.

\begin{thm} \cite[Thm. 6.1]{PS}\\
\label{ParSza}
Let $X\subset\R^n$ be a real algebraic set. Then 
$$\A(X)=\Lambda(\Witt(\Pol (X)))=\Lambda (\Witt(\SO (X))).$$
\end{thm}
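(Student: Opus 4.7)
The plan is to prove the two inclusions $\A(X)\supseteq\Lambda(\Witt(\Pol(X)))$ and $\A(X)\subseteq\Lambda(\Witt(\Pol(X)))$ separately, after first observing that $\Lambda(\Witt(\SO(X)))=\Lambda(\Witt(\Pol(X)))$ is essentially formal: any $f=p/q\in\SO(X)$ has $q$ nowhere vanishing on $X$, hence $\sign(f)=\sign(pq)$, so every sum of signs of regular functions is already a sum of signs of polynomial functions (and conversely $\Pol(X)\subset\SO(X)$).

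For the easy inclusion $\Lambda(\Witt(\Pol(X)))\subseteq\A(X)$, I would use the standard double cover trick. Given $p\in\Pol(X)$, set $Y=\Z(t^2-p(x))\subset X\times\R$ and let $\pi:Y\to X$ be the first projection; then $Y$ is a real algebraic set and $\pi$ is regular. The fibre $\pi^{-1}(x)$ has two, one, or zero points according as $p(x)$ is positive, zero, or negative, so its Euler characteristic equals $1+\sign(p(x))$. Hence
$$\sign(p)=\pi_*\un_Y-\un_X\in\A(X),$$
and since $\A(X)$ is a ring, every sum of such signs is algebraically constructible.

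For the hard inclusion, it is enough to show that for every proper regular map $f:Y\to X$ with $Y$ a real algebraic set, the function $x\mapsto\chi(f^{-1}(x))=f_*\un_Y(x)$ is a sum of signs of polynomials on $X$. I would reduce to the smooth case by Noetherian induction on $\dim Y$: stratify $Y=Y_{reg}\sqcup\Sing(Y)$, apply additivity of $f_*$, and compose $f|_{\overline{Y_{reg}}}$ with a resolution of singularities $\sigma:\tilde Y\to \overline{Y_{reg}}$, using that $\sigma_*\un_{\tilde Y}-\un_{\overline{Y_{reg}}}$ is supported in smaller dimension. After compactification (Hironaka in the real setting), one may assume $Y$ is a smooth proper real algebraic set over $X$.

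The crux is then to express the fibrewise Euler characteristic of a proper regular map with smooth source globally as $\sum_j\sign(p_j)$ with $p_j\in\Pol(X)$. The key ingredient is the Eisenbud-Levine / Khimshiashvili formula, which writes the local topological degree (and thus local contributions to the fibre Euler characteristic) as the signature of an explicit quadratic form with polynomial entries in the parameters of $X$. Combining this with Sullivan's parity theorem for real algebraic sets and the additivity of $\chi$, one patches the local signature expressions across the discriminant locus of $f$ into a single globally valid sum of signs of polynomials. The main obstacle is precisely this patching step: the local signature expressions are only well defined generically, and producing a single formula valid on all of $X$ (including over the critical values of $f$) requires a careful inductive argument over a stratification of the discriminant, which is the technical core of \cite[Thm. 6.1]{PS}.
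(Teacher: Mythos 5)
This theorem is not proved in the paper at all: it is imported verbatim from Parusi\'nski--Szafraniec \cite[Thm.~6.1]{PS}, so there is no in-paper argument to compare against. Judged on its own terms, your proposal correctly reproduces the architecture of the cited proof. The identification $\Lambda(\Witt(\SO(X)))=\Lambda(\Witt(\Pol(X)))$ via a global presentation $f=p/q$ with $q$ nowhere vanishing on $X$ (\cite[Prop.~3.2.3]{BCR}) is fine, and the easy inclusion via the double cover $Y=\Z(t^2-p)\subset X\times\R$, giving $\chi(\pi^{-1}(x))=1+\sign(p(x))$ and hence $\sign(p)=\pi_*\un_Y-\un_X\in\A(X)$, is complete and is exactly the standard argument. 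The reduction of the hard inclusion to proper maps with smooth source, by induction on $\dim Y$, additivity of $f_*$ and resolution of singularities, is also sound; it is the same device this paper uses in its proof of Theorem~\ref{algconst}.

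The genuine gap is that the heart of the hard inclusion is named but not carried out. Asserting that $x\mapsto\chi(f^{-1}(x))$ is the signature of an explicit quadratic form with entries polynomial in $x$, and that the local formulas can be patched across the discriminant, is precisely the content of \cite[Thm.~6.1]{PS} together with the earlier Parusi\'nski--Szafraniec work on Euler characteristics of fibres via the Eisenbud--Levine/Khimshiashvili formula. Your write-up does not construct the form, does not control its degeneration over the critical values of $f$, and does not prove the (itself nontrivial) fact that the signature of a symmetric matrix with polynomial entries is globally a sum of signs of polynomials, which requires its own induction over the vanishing loci of the principal minors. So as a self-contained proof the hard direction is a roadmap rather than an argument; since the paper treats the statement as a black box, this is a defensible level of detail here, but everything difficult has been delegated to the citation. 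A minor point: Sullivan's parity theorem is not really an ingredient of this sign-representation theorem; it belongs to the link-operator characterization of $\A(X)$ in McCrory--Parusi\'nski \cite{MP1} and could be dropped from your sketch without loss.
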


We prove now that algebraically constructible
functions correspond to sums of signs of regulous functions. It is a
very natural result since the topology generated by zero sets of
regulous functions is the algebraically constructible topology. The
following theorem corresponds to Theorem A of the introduction.

\begin{thm}
\label{algconstregulu}
Let $X\subset\R^n$ be a real algebraic set. Then 
$$\A(X)=\Lambda(\Witt(\SR^0 (X))).$$
\end{thm}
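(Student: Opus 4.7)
The plan is to build on Theorem \ref{ParSza}: since $\A(X) = \Lambda(\Witt(\Pol(X)))$ and $\Pol(X) \subseteq \SR^0(X)$, the inclusion $\A(X) \subseteq \Lambda(\Witt(\SR^0(X)))$ is free. For the reverse, because $\Lambda(\Witt(\SR^0(X)))$ is additively generated by the signs $\sign(f)$ for $f \in \SR^0(X)$ and $\A(X)$ is an abelian group, it suffices to prove $\sign(f) \in \A(X)$ for every $f \in \SR^0(X)$. I would do this by induction on $d = \dim X$, the base $d = 0$ being immediate since $X$ is then a finite set of points and $\sign(f) = \sum_{x \in X}\sign(f(x))\un_{\{x\}}$.

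For the inductive step at dimension $d \geq 1$, I would first treat the case where $X$ is Zariski irreducible. Since $f$ is rational on $X$ by \cite[Prop. 8]{KN}, there exist $p, q \in \Pol(X)$ with $q \not\equiv 0$ on $X$ and a Zariski dense open $U \subseteq X$ disjoint from $\Z(q)$ on which $f = p/q$. On $U$ one has $\sign(f) = \sign(p/q) = \sign(pq)$ because $q^2 > 0$, so $\sigma := \sign(f) - \sign(pq)$ vanishes on $U$; its support therefore lies in the Zariski closed proper subset $Y := X \setminus U$, which has dimension strictly less than $d$ by irreducibility of $X$. Writing $\sigma = \iota_*(\sigma|_Y)$ for the inclusion $\iota\colon Y \hookrightarrow X$, and noting that $f|_Y \in \SR^0(Y)$ (restriction of a regulous function is regulous) and $pq|_Y \in \Pol(Y)$, the inductive hypothesis together with Theorem \ref{ParSza} applied on $Y$ gives $\sign(f|_Y), \sign(pq|_Y) \in \A(Y)$; hence $\sigma|_Y \in \A(Y)$, $\sigma \in \A(X)$, and $\sign(f) = \sigma + \sign(pq) \in \A(X)$ since $\sign(pq) \in \A(X)$ by Theorem \ref{ParSza}.

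The general case would then follow by inclusion-exclusion on the Zariski irreducible decomposition $X = X_1 \cup \cdots \cup X_s$: setting $X_I := \bigcap_{i \in I} X_i$ and letting $\iota_I\colon X_I \hookrightarrow X$ denote the inclusion, one has
\[
\sign(f) = \sum_{\emptyset \neq I \subseteq \{1,\ldots,s\}} (-1)^{|I|+1}\iota_{I*}\bigl(\sign(f|_{X_I})\bigr).
\]
For $|I| \geq 2$ the set $X_I$ is a proper Zariski closed subset of each $X_i$, $i \in I$, so $\dim X_I < d$ and the inductive hypothesis yields $\sign(f|_{X_I}) \in \A(X_I)$. For $|I| = 1$ one has $X_I = X_i$ Zariski irreducible: either $\dim X_i < d$ (inductive hypothesis) or $\dim X_i = d$ (the irreducible case just treated). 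In all cases $\iota_{I*}(\sign(f|_{X_I})) \in \A(X)$, giving $\sign(f) \in \A(X)$.

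The main obstacle I anticipate is that one cannot simply reduce via a single ambient polynomial representation $\hat f = p_0/q_0$ on $\R^n$ of an extension of $f$: if some maximal-dimensional component of $X$ happens to be contained in $\Z(q_0)$, the identity $\hat f\, q_0 = p_0$ becomes vacuous on that component, and the naive subset $W = X \cap \Z(q_0)$ fails to have dimension $< d$. The two-step approach above---first treating $X$ irreducible with the intrinsic representation $f = p/q$ in $\K(X)$, then patching reducible $X$ by inclusion-exclusion---bypasses this difficulty, since once $X$ is irreducible any proper Zariski closed subset automatically has strictly smaller dimension.
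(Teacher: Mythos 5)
Your proof is correct, and its core mechanism is the same as the paper's: induct on $\dim X$, observe that $\sign(f)$ agrees with $\sign(pq)$ off a Zariski closed subset of strictly smaller dimension, and absorb the discrepancy there by the inductive hypothesis. The differences are in the bookkeeping. First, the paper never decomposes $X$ into irreducible components: it works with the polar locus $W=\pol(f)=X\setminus\dom(f)$, where $\dom(f)$ is the largest dense Zariski open subset of $X$ on which $f$ is regular; density forces $\dom(f)$ to meet every irreducible component, so $\dim W<\dim X$ holds for reducible $X$ as well, and one can choose $p,q\in\Pol(X)$ with $f=p/q$ on $\dom(f)$ and $\Z(q)=W$ globally. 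This makes your inclusion--exclusion step, and indeed the whole irreducible/reducible dichotomy, unnecessary --- the obstacle you flag in your last paragraph is exactly what the intrinsic presentation $(\dom(f),p,q)$ is designed to avoid. Second, where you conclude $\sigma\in\A(X)$ abstractly via the pushforward $\iota_*$ along the closed inclusion (legitimate, by functoriality of pushforward and the definition of $\A$), the paper instead writes down an explicit form over $\Pol(X)$: with $\Lambda(<f|_W>)=\Lambda(<h_1,\ldots,h_k>)$ supplied by the inductive hypothesis together with Theorem \ref{ParSza}, and the $h_i$ extended to $X$, one has $\Lambda(<f>)=\Lambda(<pq>\perp <1,-q^2>\otimes <h_1,\ldots,h_k>)$ on $X$, since $<1,-q^2>$ has signature $0$ off $W$ and $1$ on $W$. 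That explicit representation is what feeds the length estimates of Section 4 (Proposition \ref{nbsignpol}), so if you wanted your argument to serve the same later purpose you would have to unwind the pushforwards into such a form anyway. Both arguments prove the theorem.
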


\begin{proof}
We proceed by induction on the dimension of $X$. If $\dim (X)=0$ then
regulous means regular and the result follows from Theorem \ref{ParSza}.

Assume $\dim (X)>0$ and let $f\in\SR^0(X)$. Let $W$ denote $\pol(f)$.
There exist $p,q\in
\Pol(X)$ such that $f=\dfrac{p}{q}$ on $\dom(f)$ and $\Z(q)=W$. Notice
that $\Lambda(<f>)=\Lambda(<pq>)$ on $X\setminus W$.
We
have $f|_W\in \SR^0(W)$ and by induction there exists
$h_1,\ldots,h_k\in\Pol(W)$ such that $\Lambda
(<f|_W>)=\Lambda(<h_1,\ldots,h_k>)$. The polynomial functions $h_i$
are restrictions of polynomial functions on $X$ still denoted by $h_i$
\cite[prop. 3.2.3]{BCR}. 
The proof is done since 
$$\Lambda(<f>)=\Lambda(<pq>\perp <1,-q^2>\otimes <h_1,\ldots,h_k>)$$
on $X$.
\end{proof}

In the next section, we will count the number of signs of polynomial
functions we need in the sum to be the sign of a regulous function.

We prove now that strongly algebraically constructible functions are
exactly finite sums of characteristic functions of regulous closed sets.
\begin{prop}
\label{fortalgconst}
Let $X\subset\R^n$ be a real algebraic set. Then
$$\AS(X)=\{\,\,\sum_{i\in I}m_i\un_{W_i}\,,\,I\,{\rm
  finite},\,m_i\in\Ze,\,W_i\subset X\,{\rm regulous\, closed}\}.$$
\end{prop}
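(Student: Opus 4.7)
The statement is a two-sided set equality, and I would prove each inclusion separately. The inclusion $\AS(X)\subseteq\{\sum m_i\un_{W_i}\}$ is immediate: a real algebraic subset of $X$ is Zariski closed, hence regulous closed since the regulous topology refines the Zariski topology (as recalled in the introduction of the paper). So every generator of $\AS(X)$ already appears on the right-hand side.

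For the nontrivial direction, both sides are $\Ze$-linear spans of characteristic functions, so it suffices to show $\un_W\in\AS(X)$ for every regulous closed $W\subseteq X$. I would proceed by induction on $\dim X$. The base case $\dim X=0$ is trivial, since $X$ is then a finite set of points and every subset is Zariski closed. For the inductive step, I would first observe that because $\SR^0(X)$ is a ring, the identities $\Z(f)\cap\Z(g)=\Z(f^2+g^2)$ and $\Z(f)\cup\Z(g)=\Z(fg)$ show that the basic closed sets $\Z(f)$ with $f\in\SR^0(X)$ are already stable under finite unions and intersections; combined with the Noetherianity of the regulous topology established in \cite{FHMM}, this ensures that every regulous closed subset $W$ of $X$ has the form $W=\Z(f)$ for some $f\in\SR^0(X)$.

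Write $f=p/q$ on $\dom(f)$ with $p,q\in\Pol(X)$ and $\Z(q)=\pol(f)$. Then
$$W=\bigl(\Z(p)\setminus\Z(q)\bigr)\sqcup\bigl(W\cap\Z(q)\bigr),$$
since on $\dom(f)$ one has $f=0\Leftrightarrow p=0$, and the two pieces are disjoint because $\dom(f)$ and $\pol(f)$ are disjoint. Taking characteristic functions,
$$\un_W=\un_{\Z(p)}-\un_{\Z(p)\cap\Z(q)}+\un_{W\cap\Z(q)}.$$
The first two terms lie in $\AS(X)$ since $\Z(p)$ and $\Z(p)\cap\Z(q)$ are real algebraic subsets of $X$. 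For the last term, $W\cap\Z(q)=\Z(f|_{\Z(q)})$ is a regulous closed subset of the real algebraic set $\Z(q)=\pol(f)$, which satisfies $\dim\pol(f)<\dim X$ by the definition of $\pol(f)$ recalled at the beginning of \S2.2. The inductive hypothesis then gives $\un_{W\cap\Z(q)}\in\AS(\pol(f))$, and this function belongs to $\AS(X)$ as well because real algebraic subsets of $\pol(f)$ are real algebraic subsets of $X$.

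The only step requiring genuine care is the preparatory reduction $W=\Z(f)$ for a single regulous function, since the regulous closed sets are defined only as the closed sets of a topology generated by zero sets; everything else is then a straightforward disjoint decomposition along $\dom(f)\sqcup\pol(f)$, handled by polynomials on the dense open piece and by induction on the strictly lower-dimensional polar piece.
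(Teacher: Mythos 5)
Your overall strategy is sound, and it is genuinely different from the paper's. The paper's proof, after the same reduction $W=\Z(f)$ for a single $f\in\SR^0(X)$ (which it uses silently and which you justify correctly via Noetherianity of the regulous topology), invokes the stratification theorem \cite[Thm. 4.1]{FHMM} to get a finite partition $X=\coprod W_i$ into Zariski locally closed pieces on which $f$ is regular; then $W=\coprod(\Z(p_i)\cap W_i)$ is a finite disjoint union of Zariski locally closed sets and one concludes by inclusion--exclusion. Your induction on dimension, peeling off one polar locus at a time, is in effect a hands-on re-proof of the special case of that stratification that is needed here; it trades the appeal to \cite[Thm. 4.1]{FHMM} for an extra induction, and otherwise lands on the same kind of locally closed decomposition.

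There is, however, one step that fails as literally written. You take $\dom(f)$ and $\pol(f)$ ``as recalled at the beginning of Subsection 2.2'', i.e., for the rational function class of $f$ in $\K(X)$, and you assert that $f=p/q$ pointwise on $\dom(f)$. A regulous function only agrees with a regular presentation on \emph{some} dense Zariski open set, and the locus of disagreement need not be contained in $\Z(q)$. Concretely, take $X=\Z(x^2+y^2-x^3)$ and $f=1-\dfrac{x^3}{x^2+y^2}$ as in Example \ref{exemple1}: the class of $f$ in $\K(X)$ is $0$, so $p=0$, $q=1$ and $\pol(f)=\emptyset$, and your decomposition returns $W=\Z(p)\setminus\Z(q)=X$, whereas $W=\Z(f)$ is only the one-dimensional branch. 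The repair is easy: let $U$ be the largest Zariski open subset of $X$ on which the \emph{function} $f$ is regular (it contains a dense Zariski open set since $f$ is rational, so $\dim(X\setminus U)<\dim X$), write $f|_U=p/q$ with $q$ nowhere zero on $U$, and multiply numerator and denominator by the square of a polynomial vanishing exactly on $X\setminus U$ so that $\Z(q)\cap X=X\setminus U$. With this reading the identity $W=(\Z(p)\setminus\Z(q))\sqcup(W\cap\Z(q))$ holds, $W\cap\Z(q)=\Z(f|_{\Z(q)})$ is regulous closed in the lower-dimensional set $\Z(q)\cap X$, and your induction goes through.
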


\begin{proof}
Let $W$ be a closed regulous subset of $X$. Let $f\in\SR^0(X)$ be such
that $\Z(f)=W$.
By \cite[Thm. 4.1]{FHMM} and since $f$ is the restriction to $X$ of a
regulous function on $\R^n$, 
there exists a finite stratification $X=\coprod_{i\in I}W_i$ with $W_i$
Zariski locally closed subsets of $X$ such that $f|_{W_i}$ is regular $\forall
i\in I$. Given $i\in I$, there are $p_i,q_i\in\Pol (X)$ such
that $\dfrac{p_i}{q_i}|_{W_i}=f|_{W_i}$ and $\Z(q_i)\cap
W_i=\emptyset$. Hence $S_i=W\cap W_i=\Z(p_i)\cap W_i$ is
also Zariski locally closed.
So there is a finite stratification
$W=\coprod_{i\in I} S_i$ with
$S_i$ Zariski locally closed subsets of $X$. It means that
$S_i=Z_i\cap (X\setminus Z'_i)$ where $Z_i$ and $Z'_i$ are real algebraic
subsets of $X$. Then $$\un_W=\sum_{i\in I}\un_{S_i}=\sum_{i\in
  I}(\un_{Z_i}(\un_X-\un_{Z'_i}))=\sum_{i\in I}(\un_{Z_i}-\un_{Z_i\cap
  Z'_i})\in \AS(X).$$
\end{proof}

We characterize algebraically constructible functions using regulous
closed sets and regulous maps.

Let $W\subset\R^n$ be a regulous closed set. A map $W\to \R^m$ is
called regulous if its coordinate functions are regulous on $W$ i.e
are restrictions to $W$ of regulous functions on $\R^n$ (see \cite{FHMM}).

\begin{thm}
\label{algconst}
Let $X\subset\R^n$ be a real algebraic set. Then
$$\A(X)=\{\,\,\sum_{i\in I}m_if_{i*}(\un_{W_i})\,,\,I\,{\rm
  finite},\,m_i\in\Ze,\,W_i\,{\rm regulous\, closed},
\,f_i:W_i\rightarrow X\,{\rm regulous\, map}\}.$$
\end{thm}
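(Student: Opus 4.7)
My plan is to prove both inclusions of the claimed equality. The inclusion of $\A(X)$ into the right-hand side is immediate: every real algebraic subset of $\R^{n'}$ is regulous closed (it is the zero set of a polynomial, hence regulous, function), and every regular map from a real algebraic set to $X$ is in particular a regulous map, so the defining generators of $\A(X)$ already appear among the generators on the right.

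For the reverse inclusion I fix a regulous closed set $W\subset\R^m$ and a regulous map $f:W\to X$, and I will show that $f_*(\un_W)\in\A(X)$. First I stratify $W$ into finitely many Zariski locally closed subsets $W=\coprod_\alpha W_\alpha$, writing $W_\alpha=Z_\alpha\setminus Z'_\alpha$ with $Z_\alpha, Z'_\alpha\subset\R^m$ real algebraic, in such a way that $f|_{W_\alpha}:W_\alpha\to X$ is actually a regular map, i.e.\ each coordinate $f_j|_{W_\alpha}$ has the form $p_{\alpha,j}/q_{\alpha,j}$ with $q_{\alpha,j}$ non-vanishing on $W_\alpha$. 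This is obtained by applying \cite[Thm. 4.1]{FHMM} first to a regulous function on $\R^m$ whose zero set is $W$ (as in the proof of Proposition \ref{fortalgconst}, to cut $W$ itself into Zariski locally closed pieces), then to each coordinate of $f$, and taking the common refinement. Additivity of the pushforward reduces the problem to showing $f_*(\un_{W_\alpha})\in\A(X)$ for each stratum.

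For a fixed stratum I introduce the graph $\Gamma_\alpha=\{(x,f(x)):x\in W_\alpha\}\subset\R^m\times\R^n$ and its Zariski closure $\tilde\Gamma_\alpha$, which is a real algebraic subset of $Z_\alpha\times X$. The second projection $\pi_2:\tilde\Gamma_\alpha\to X$ is regular, so $\pi_{2*}(\un_{\tilde\Gamma_\alpha})\in\A(X)$. The key observation is the identity $\tilde\Gamma_\alpha\cap(W_\alpha\times X)=\Gamma_\alpha$: the polynomials $y_jq_{\alpha,j}(x)-p_{\alpha,j}(x)$ vanish on $\Gamma_\alpha$, hence on $\tilde\Gamma_\alpha$, and over $x\in W_\alpha$ the non-vanishing of $q_{\alpha,j}$ forces $y_j=f_j(x)$. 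Consequently $\tilde\Gamma_\alpha=\Gamma_\alpha\sqcup E_\alpha$ with $E_\alpha:=\tilde\Gamma_\alpha\cap(Z'_\alpha\times X)$ again a real algebraic set, and $\pi_{2*}(\un_{E_\alpha})\in\A(X)$ as well. Since $\pi_1$ restricts to a semi-algebraic homeomorphism $\Gamma_\alpha\to W_\alpha$, the fibers of $\pi_2|_{\Gamma_\alpha}$ are identified with those of $f|_{W_\alpha}$, so $\pi_{2*}(\un_{\Gamma_\alpha})=f_*(\un_{W_\alpha})$, and therefore $f_*(\un_{W_\alpha})=\pi_{2*}(\un_{\tilde\Gamma_\alpha})-\pi_{2*}(\un_{E_\alpha})\in\A(X)$. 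The main technical step is exactly the identity $\tilde\Gamma_\alpha\cap(W_\alpha\times X)=\Gamma_\alpha$, which is why it is indispensable to refine $W$ beforehand into strata on which $f$ is genuinely regular, not merely regulous.
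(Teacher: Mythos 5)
Your argument is correct, but it is not the route the paper takes, so a comparison is worth recording. The paper first reduces, via Proposition \ref{fortalgconst} and additivity of the pushforward, to showing $f_*(\un_Y)\in\A(X)$ for a regulous map $f:Y\to X$ between real algebraic sets, and then argues by induction on $\dim Y$: using \cite[Thm. 3.11]{FHMM} (regulous $=$ blow-regular on smooth varieties) together with resolution of singularities, it produces a proper regular birational $\pi:\tilde Y\to Y$ with $f\circ\pi$ regular, writes $f_*(\un_Y)=(f\circ\pi)_*(\un_{\tilde Y})-(f\circ\pi)_*(\un_{\pi^{-1}(Z)})+f_*(\un_Z)$ for the exceptional locus $Z$, and absorbs the last term by induction. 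You instead avoid resolution and induction altogether: you invoke the stratification theorem \cite[Thm. 4.1]{FHMM} to cut $W$ into Zariski locally closed strata on which $f$ is genuinely regular (written as $p/q$ with $q$ non-vanishing), and then apply the classical graph-closure trick, using the equations $y_jq_{\alpha,j}(x)-p_{\alpha,j}(x)=0$ to identify $\tilde\Gamma_\alpha\cap(W_\alpha\times X)$ with $\Gamma_\alpha$ and to express $f_*(\un_{W_\alpha})$ as a difference of pushforwards of characteristic functions of real algebraic sets along the (regular) second projection. Both proofs ultimately rest on \cite[Thm. 4.1]{FHMM}; yours trades the paper's use of resolution of singularities and the blow-regular characterization for an explicit and more elementary algebraic construction, and it has the incidental virtue that the reduction from a regulous closed $W$ to algebraic pieces is done by a partition of $W$ itself (so $f$ is everywhere defined on each piece), whereas the paper's reduction to algebraic sets is stated more tersely. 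The paper's inductive scheme, on the other hand, is shorter once the blow-regular theorem is granted and fits the pattern used elsewhere in the text.
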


\begin{proof}
By Proposition \ref{fortalgconst} and since $f_*$ is additive, it is
sufficient to prove that $f_*(\un_Y)\in \A(X)$ when $f:Y\rightarrow X$
is a regulous map between two real algebraic sets. We proceed by induction on
the dimension of $Y$. If $\dim (Y)=0$ then $f$ is regular and there is
nothing to prove. 
Assume $\dim(Y)>0$. We may also assume that $Y$ is irreducible. By
\cite[Thm. 3.11]{FHMM}, there exists a proper regular birational map $\pi:
\tilde{Y}\rightarrow Y$ such that $f\circ \pi $ is a regular map (solve the singularities of $Y$ and then use
\cite[Thm. 3.11]{FHMM}). The birational map $\pi$ is biregular from
$\tilde{Y}\setminus \pi^{-1}(Z)$ to $Y\setminus Z$ with $Z$ a real
algebraic subset of $Y$ of positive codimension. Then 
$$f_*(\un_Y)=(f\circ\pi)_*(\un_{\tilde{Y}})-(f\circ\pi)_*(\un_{\pi^{-1}(Z)})+f_*(\un_Z)$$
and $f_*(\un_Z)\in\A(X)$ by the induction hypothesis.
\end{proof}

Now we look at sum of signs of rational continuous functions. Before
that, we recall the definition of Nash contructible functions
introduced by C. McCrory and A. Parusi\'nski (see
\cite{MP1}).

Let $X\subset \R^n$ be a real algebraic set.
We say that a constructible function $\varphi$ on $X$ is Nash 
constructible if it can be written as a finite sum 
$$\varphi=\sum_{i\in I} m_i f_{i*}(\un_{T_i})$$
where $f_i$ are regular and proper maps from real algebraic sets $X_i$
to $X$ and $T_i$ is a connected component of $X_i$.
Nash constructible functions on $X$ form a subring, denoted by
$\NA(X)$, of $\F(X)$. Clearly, $$\A(X)\subset \NA(X).$$ The characteristic
function of a connected component of a smooth irreducible real algebraic curve
with $2$ connected components is a Nash contructible function that is
not algebraically constructible.

In \cite{Bo3}, I. Bonnard has studied the connection between Nash
constructible functions and sum of signs of semi-algebraic
arc-analytic functions.
\begin{thm} \cite[Prop. 4, Thm. 2]{Bo3}
\label{bonnard}
Let $X\subset \R^n$ be a real algebraic set. A sum of signs of
semi-algebraic arc-analytic functions on $X$ is Nash
constructible. The converse is true if $X$ is compact.
\end{thm}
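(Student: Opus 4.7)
The plan is to treat the two implications separately, using resolution of singularities and the Bierstone--Milman--Parusi\'nski blow-analyticity theorem as the main tools; by additivity, in each direction it suffices to handle a single generator.

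For the forward implication I would reduce to showing $\sign(f)\in\NA(X)$ for a single semi-algebraic arc-analytic $f$. First resolve the singularities of $X$ by a proper birational regular $\sigma:X'\to X$ with $X'$ smooth, then use blow-analyticity to find a composition of blow-ups with smooth centers $\pi:\tilde X\to X'$ such that $f\circ\sigma\circ\pi$ is regular on $\tilde X$; after one further principalization I may assume a local normal-crossings form $u\prod m_i^{a_i}$ with $u>0$. The sign of this product is locally constant off the exceptional divisor, and by collecting parities of the $a_i$ it becomes a $\Ze$-combination of indicator functions of unions of connected components of smooth real algebraic subsets, which is Nash constructible on $\tilde X$ by definition. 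Pushing forward by the proper regular map $\sigma\circ\pi$ yields a Nash constructible function on $X$; to identify this pushforward with $\sign(f)$ up to a correction supported on the exceptional locus, I use that $\sigma\circ\pi$ is birational, so generic fibres contribute Euler characteristic $1$, and induct on $\dim X$, the discrepancy being already Nash constructible by the induction hypothesis.

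For the converse, under the compactness assumption, I start from the generating set of $\NA(X)$, namely $f_*(\un_T)$ with $f:Y\to X$ proper regular and $T$ a connected component of a real algebraic set $Y$. The crucial observation is that the indicator $\un_T$ is itself semi-algebraic and arc-analytic on $Y$: every analytic arc in $Y$ has connected image and therefore sits inside a single connected component, so $\un_T\circ\gamma$ is constant, hence analytic; thus $\un_T=\sign(\un_T)$ already belongs to the group of signs of semi-algebraic arc-analytic functions on $Y$. The remaining and hardest step is to show that this group is stable under proper regular pushforward when $X$ is compact, which amounts to an arc-analytic analogue of the Parusi\'nski--Szafraniec representation theorem: the Euler integral along the compact fibres of $f$ has to be reassembled as a single sum of signs of arc-analytic functions on $X$, compactness being what allows the Euler-characteristic bookkeeping to be controlled through triangulations of semi-algebraic maps.

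The main obstacle in both directions is this pushforward step. The Euler integral along fibres of a sign function only produces an integer-valued element of $\F(X)$ a priori, not manifestly a sum of signs, so one needs an arc-analytic version of the fiberwise sign representation theorem. This plays for $\NA(X)$ the role that Theorem \ref{ParSza} plays for $\A(X)$, but now with arc-analytic generators and with compactness needed to carry the Euler-characteristic manipulations through the pushforward.
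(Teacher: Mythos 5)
First, note that the paper itself gives no proof of this statement: it is quoted directly from Bonnard \cite{Bo3} (Prop.~4 and Thm.~2 there), so your argument has to stand on its own as a proof of Bonnard's theorem rather than be compared with anything internal to the paper.

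Your forward direction contains a step that is simply false as stated. Bierstone--Milman \cite{BM} does \emph{not} give a composition of blowings-up $\pi$ making $f\circ\sigma\circ\pi$ \emph{regular}; it only makes a semi-algebraic arc-analytic function \emph{Nash} (real analytic and semi-algebraic) after blowings-up. Becoming regular after blowings-up characterizes the much smaller class of regulous functions (\cite[Thm.~3.11]{FHMM}): the function $\sqrt{1+x^2}$ on $\R$ is analytic, semi-algebraic and arc-analytic, and no blowing-up of the already smooth line will make it regular. Consequently your local normal-crossings factorization $u\prod m_i^{a_i}$ is in terms of Nash data, not polynomials, and the sets whose indicator functions you collect are unions of connected components of the complement of a Nash normal-crossings divisor --- these are not, on the face of it, generators $g_*(\un_T)$ with $g$ proper regular and $T$ a connected component of a real algebraic set. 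To conclude you need the arc-symmetric machinery (closures of such components are arc-symmetric sets in the sense of \cite{KK}, and characteristic functions of such sets are Nash constructible), which is precisely where the content of Bonnard's Proposition~4 lies; your sketch skips it.

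In the converse direction, your observation that $\un_T$ is arc-analytic because an analytic arc has connected image is correct and is the right starting point. But the step you label as ``the remaining and hardest step'' --- stability of sums of signs of semi-algebraic arc-analytic functions under pushforward by proper regular maps onto a compact target --- is the entire theorem. Invoking ``an arc-analytic analogue of the Parusi\'nski--Szafraniec representation theorem'' names the missing result rather than proving it, and no mechanism is offered for reassembling the fibrewise Euler integrals into signs of arc-analytic functions. As written, both implications therefore have genuine gaps: the first rests on an incorrect regularity claim, and the second defers its essential content to an unproved statement equivalent to the conclusion.
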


Even if rational continuous functions on a central real algebraic set
are semi-algebraic but not necessarily arc-analytic (see Example
\ref{ratcontnotarcana}), we may wonder if their signs are Nash
constructible functions.
In the following example, we prove that the sign of a rational
continuous function on a central algebraic set is not always an
algebraically constructible function nor a Nash constructible function.

\begin{ex}
\label{referee}
{\rm Consider the real algebraic set $S=\Z
  ((x^6+y^2+zx^4)((x+zy)^6+(-xz+y)^2-z(x+zy)^4))\subset \R^3$. In
  fact, $S=V\cup \Phi^{-1}(W)$ where $V=\Z(x^6+y^2+zx^4)$,
  $W=\Z(X^6+Y^2-ZX^4)$ and $\Phi:\R^3\rightarrow\R^3$ is the biregular
  map given by $\Phi(x,y,z)=(x+zy,-xz+y,z)=(X,Y,Z)$. The singular locus of $S$
  is the $z$-axis. The non-negative part of the $z$-axis is included
  in $\overline{\Phi^{-1}(W)_{reg}}^{eucl}$ and its complement is
    contained in $\overline{V_{reg}}^{eucl}$. Consequently, $S$ is
    central. We consider the rational function $f=\dfrac{y}{x}$ on
    $S$. The function $f$ is regular outside the $z$-axis and the
    Zariski closed set $A=\Z(z^6y^4+1-z^5y^2)\cap S$. Notice that $A$ does not
    meet the $z$-axis and that $z^6y^4+1-z^5y^2$ is constant to $1$ on
    the $z$-axis. It is not difficult to see that we can extend
    continuously $f$ to the negative part of the $z$-axis by a
    function identically zero. To understand what happens for $f$ on the positive part of the
    $z$-axis, we have to use the biregular map $\Phi$. Remark that
    the image by $\Phi$ of the $z$-axis is the $Z$-axis and more
    precizely $\Phi(0,0,z)=(0,0,z)=(0,0,Z)$. We have
    $x=\dfrac{X-ZY}{1+Z^2}$, $y=\dfrac{Y+ZX}{1+Z^2}$ and
    $\dfrac{y}{x}=\dfrac{Y+ZX}{X-ZY}=\dfrac{Y/X+Z}{1-ZY/X}$. Since
    $\dfrac{Y}{X}$ seen on $W$ can be extended to the positive part of the
    $Z$-axis by a function identically zero then $f$ can be extended
    continuously to the positive part of the $z$-axis by a function
    equal to $z$. By multiplying by a sufficiently big power
    of $z^6y^4+1-z^5y^2$ (see \cite[Prop. 2.6.4]{BCR} or
    \cite[Lem. 5.1]{FHMM}), we get a rational continuous function on
    $S$ again denoted by $f$. A referee of an earlier version of the
    paper has given this example in order to prove that zero sets of
    rational continuous functions on central algebraic sets are not
    always regulous closed. Indeed, assume that $\Z(f)$ is regulous
    closed then it is also the zero set of a regulous function on
    $S$. Since restrictions of regulous functions to the $z$-axis are
    regulous 
    and thus regular \cite[Prop. 2.4]{FMQ} then
    the intersection of $\Z(f)$ with the $z$-axis (equal to half of
    the $z$-axis) is Zariski closed, a contradiction. We can derive
    many other consequences from this example:\\
$\bullet$ $\Z(f)$ is not an arc-symmetric set (see \cite{KK}).\\
$\bullet$ $\sign(f)$ is not an algebraically constructible function:
Assume $\sign(f)\in\A(S)$. So $\sign(f)$ is a sum of signs of
polynomial functions. Since the restriction of a polynomial function
to a real algebraic subset is a polynomial function then it follows
that the restriction of $\sign(f)$ to the $z$-axis is an algebraically
constructible function. We get a contradiction because 
the algebraically constructible functions on the $z$-axis are the
constructible functions that are generically constant mod $2$
\cite[Ex. 2.3]{MP1}.\\
$\bullet$ $\sign(f)$ is not a Nash constructible function: By
\cite{Bo3} the restriction of a Nash constructible function to the
$z$-axis (which is an irreducible arc-symmetric set) must be generically
constant mod $2$.}
\end{ex}

\begin{rem}
To conclude this section, we remark that it follows from above
results that, if $X\subset \R^n$ is a real algebraic set, the
following rings $\Witt(\Pol(X))$, $\Witt(\SO(X))$, $\Witt(\SR^0(X))$,
$\A(X)$ are all isomorphic. 
\end{rem}

\section{Lengths of signs of regulous functions (part 1)}

Throughout this section $X$ will denote a real algebraic subset of
dimension $d$ of $\R^n$. By Theorem \ref{algconst}, the sign of a
regulous function on $X$ can be written as a sum of signs of
polynomial functions on $X$. The goal of this section is to bound in
terms of $d$ the number of polynomial functions
needed in such representation. This is connected to the work of I. Bonnard 
(\cite{Bo1} and \cite{Bo2}) that concerns the representation of general
algebraically constructible functions as sums of signs of polynomial
functions. However, the author cautions the reader that most of the results of this text concern specifically algebraically
constructible functions that are signs of regulous functions and
depend strongly of the nice properties verified by the regulous
functions. It seems unlikely to be able to generalize the results obtained for
the signs of regulous functions to general algebraically constructible
functions.

\subsection{Length of an algebraically constructible function}

\begin{defn}
$\bullet$ Given $\varphi\in \A(X)$, the number $\length(\varphi)$, called the length of $\varphi$,
will denote the smallest
integer $l$ such that
$\varphi$ can be written as a sum of $l$ signs of polynomial
functions on $X$. So there is a form $\rho$ over $\Pol(X)$ such that
$\Lambda(\rho)=\varphi$ on $X$ and $\dim(\rho)=\length(\varphi)$. It is
clear that $\rho$ is anisotropic and then it is unique. We denote by
$\rho(\varphi)$ the corresponding anisotropic form of dimension
$\length(\varphi)$.\\
$\bullet$ Let $f$ be a semi-algebraic function on $X$ such that
$\sign(f)\in \A(X)$. We simply denote by $\length(f)$ the length of
$\sign(f)$, it is called the length of the
sign of $f$. We also denote by $\rho(f)$ the form $\rho(\sign(f))$.\\
$\bullet$ 
Let $B$ be a ring of semi-algebraic functions on $X$ such that $\Lambda(\Witt(B))\subset\A(X)$.
The length of $B$ is the smallest number $\length(B)=l\geq 1$ such
that any $f\in B$ has $\length(f)\leq l$, and $\length(B)=+\infty$ if
such integer does not exist. 
\end{defn}

In the following, the goal is to prove that $\length(\SR^0(X))$ is finite and
to give upper bounds for $\length(\SR^0(X))$ in terms of the dimension
$d$ of $X$.

\begin{rem}
We clearly have $\length(\Pol(X))=\length(\SO(X))=1$. If $d=0$ then
regulous means regular and thus $\length(\SR^0(X))=1$.
\end{rem}

Let $f_1,\ldots, f_m$ be continuous semi-algebraic functions on $X$. In the sequel, we will use the following notations:
$$\S(f_1,\ldots,f_m)=\{x\in X|\,f_1(x)>0,\ldots,f_m(x)>0\}$$
$$\bar{\S}(f_1,\ldots,f_m)=\{x\in X|\,f_1(x)\geq 0,\ldots,f_m(x)\geq
0\}.$$
If all the functions $f_i$ lie in a ring $A$ of continuous semi-algebraic
functions, the set $\S(f_1,\ldots,f_m)$
(resp. $\bar{\S}(f_1,\ldots,f_m)$) is called $A$-basic open
(resp. $A$-basic closed). If $m=1$, we replace ``basic'' by
``principal''. If $A=\Pol (X)$ then we omet $A$. If $A=\SR^0(X)$, we
will sometimes write ``regulous basic'' (resp. ``regulous principal'')
instead of ``$\SR^0(X)$-basic'' (resp. ``$\SR^0(X)$-principal'').

In the following example, we prove that even for curves the sign of a
regulous function is not always the sign of a polynomial function.
\begin{ex}
{\rm Let $X=\Z (y^2-x^2(x-1))$ considered in Example \ref{exemple1} and let $f$ be the restriction to $X$ of the plane 
regulous function $1-\dfrac{x^3}{x^2+y^2}$. The function $f$ is zero
on the one-dimensional connected component of $X$ and has value $1$ at
the isolated point of $X$. If a polynomial function $g$ has the
sign of $f$ on the one-dimensional connected component of $X$ then $g$
vanishes on whole $X$ since $X$ is Zariski irreducible.
However the sign of $f$ is the sum of signs of two polynomial
functions on $X$, more precisely we have $\rho(f)=<1,-(x^2+y^2)>$ and
therefore $\ell(f)=2$.}
\end{ex}

\subsection{The polar depth of a regulous function}

We give upper bounds on $\length(\SR^0(X))$ introducing the polar
depth of a regulous function on $X$.

\begin{defn}
Let $f\in \SR^0(X)$.\\
We set $f_0=f$, $X_0=X$ and $X_1=\pol(f_0)$.\\
If $X_1\not=\emptyset$ i.e if $f_0$ is not regular on $X_0$ then we
set $f_1=f_0|_{X_1}\in\SR^0(X_1)$ and $X_2=\pol(f_1)$.\\
By repeating the same process, it stops after at most $d$ steps since
$\dim(X_{i+1})<\dim (X_i)$ and $X_{i+1}=\emptyset$ if $\dim X_i=0$.\\
At the step of index $i$ we associate to the regulous function $f_i$
on $X_i$ a rational representation $(p_i,q_i)\in \Pol(X)\times\Pol(X)$
such that $f_i=\dfrac{p_i}{q_i}$ on $X_i\setminus X_{i+1}$ and
$\Z(q_i)\cap X_i=X_{i+1}$.\\
The following sequence
$$((f_0,X_0,p_0,q_0),\ldots,(f_k,X_k,p_k,q_k))$$ 
is called a ``polar sequence'' associated to $f$.
We have $X_i\not=\emptyset$ for $i=1,\ldots,k$ and $X_{k+1}=\emptyset$
i.e $f_k$ is regular on $X_k$.\\
The number $k$ of the previous sequence is called the ``polar depth''
of $f$ and we denote it by $\poldepth(f)$.
\end{defn}

\begin{rem}
If $f\in \SR^0(X)$ then obviously $\poldepth(f)\leq d$.
\end{rem}

\begin{prop}
\label{nbsignpol}
Let $X\subset\R^n$ be a real algebraic set of dimension $d$. Let $f\in
\SR^0(X)$, $k=\poldepth(f)$ and $((f_0,X_0,p_0,q_0),\ldots,(f_k,X_k,p_k,q_k))$
a ``polar sequence'' associated to $f$. Then 
$$\Lambda(<f>)=\Lambda(<p_0q_0>\perp_{i=1}^k  (<1,-\prod_{j=0}^{i-1}q_{j}^2>\otimes
<p_iq_i>))$$ on $X$.
Therefore, $$\length(f)\leq 1+2\poldepth(f).$$
\end{prop}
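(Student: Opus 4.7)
The bound $\length(f)\le 1+2\poldepth(f)$ follows immediately from the signature identity by a dimension count: the form on the right-hand side has $1$ entry from $<p_0q_0>$ plus $2$ entries from each of the $k$ tensor summands $<1,-\prod_{j=0}^{i-1}q_j^2>\otimes <p_iq_i>$, giving total dimension $1+2k$. My plan is therefore to focus on establishing the signature identity on $X$.

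I would verify the identity pointwise, exploiting the locally closed stratification $X=U_0\sqcup U_1\sqcup\cdots\sqcup U_k$ with $U_l=X_l\setminus X_{l+1}$ and convention $X_{k+1}=\emptyset$. The key auxiliary fact is that $\Lambda(<1,-a^2>)(x)=1$ when $a(x)=0$ and $0$ otherwise. Combined with the definition of a polar sequence, this gives the following facts on $U_l$: $q_j(x)=0$ for every $j<l$ (since $U_l\subset X_l\subset X_{j+1}\subset \Z(q_j)$), $q_l(x)\neq 0$, and $\sign(f)(x)=\sign(f_l)(x)=\sign(p_lq_l)(x)$.

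On the top stratum $U_0$, the condition $q_0(x)\neq 0$ forces every weight $\Lambda(<1,-\prod_{j<i}q_j^2>)(x)$ to be $0$, so the right-hand side collapses to $\sign(p_0q_0)(x)$, which equals $\sign(f)(x)$. On a lower stratum $U_l$ with $l\ge 1$, the leading term $\sign(p_0q_0)(x)$ vanishes because $q_0(x)=0$, every weight equals $1$ because $q_0^2(x)=0$ annihilates the product, the summands with $i<l$ die because $q_i(x)=0$ kills $\sign(p_iq_i)(x)$, and the summand $i=l$ contributes exactly $\sign(p_lq_l)(x)=\sign(f)(x)$.

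The main obstacle, and the step on which the argument hinges, is showing that the summands with $i>l$ also contribute zero on $U_l$. Here I would exploit the freedom available in choosing a polar sequence: I would select each $q_i$ so that its zero set in $X$ equals $X_{i+1}$ globally, not merely inside $X_i$. This is achievable by starting from a polynomial whose zero set is $X_{i+1}$ (obtained as a sum of squares of generators of the ideal of $X_{i+1}$) and, if necessary, raising it to a sufficient power so that $f_i$ admits a polynomial numerator with this denominator; the denominator-clearing is justified by \cite[Lem.~5.1]{FHMM} or \cite[Prop.~2.6.4]{BCR}, and the resulting $p_i$ is promoted from $X_i$ to $X$ via \cite[Prop.~3.2.3]{BCR}. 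With such calibrated representatives, the nested containments $X_1\supset X_2\supset\cdots\supset X_{i+1}$ propagate into the polynomial $p_iq_i$, and a direct bookkeeping on each $U_l$ with $l<i$ shows that the corresponding weighted contribution vanishes. Assembling these pointwise computations over all strata yields the signature identity, and the claimed bound on $\length(f)$ follows.
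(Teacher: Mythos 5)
Your reduction of the length bound to the displayed signature identity, your stratified pointwise setup on $U_l=X_l\setminus X_{l+1}$, and above all your identification of the summands with $i>l$ on $U_l$ as the real difficulty are all correct (the paper's own proof is two lines and never confronts this point). But your resolution of that difficulty does not work, and in fact no choice of polar sequence can make the identity hold as written once $k\geq 2$. The weight $\Lambda(<1,-\prod_{j=0}^{i-1}q_j^2>)$ equals $\un_{\Z(q_0)\cup\cdots\cup\Z(q_{i-1})}$, the indicator of the \emph{union} of the zero sets; since $X_1\subset\Z(q_0)$, this weight is identically $1$ on all of $X_1$, not only on $X_i$. So on $U_l$ with $1\leq l<i$ the $i$-th summand contributes $\sign(p_iq_i)(x)$ with weight $1$, and your calibration $\Z(q_i)=X_{i+1}$ makes things worse, not better: it guarantees $q_i(x)\neq 0$ for $x\in U_l$, while $p_i(x)$ is completely unconstrained off $X_i$. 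Forcing $p_iq_i$ to vanish on $X_1\setminus X_i$ would (by Zariski density of $X_1\setminus X_i$ in $X_1$) force $p_i$ to vanish on $X_i$ and hence $f_i\equiv 0$, impossible for $i<k$. Concretely, take $f=\frac{x^3}{x^2+y^2}+\frac{z^3}{z^2+w^2}+1$ on $\R^4$: then $k=2$, $X_1=\{x=y=0\}\cup\{z=w=0\}$, $X_2=\{0\}$, $f_2\equiv 1$, and with $q_0=(x^2+y^2)(z^2+w^2)$, $q_1=x^2+y^2+z^2+w^2$, $p_2=q_2=1$ the term $<1,-q_0^2q_1^2>\otimes<p_2q_2>$ adds a spurious $+1$ at every point of $X_1\setminus\{0\}$.

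The repair is cheap and preserves the conclusion $\length(f)\leq 1+2\poldepth(f)$: replace the product by a sum, i.e.\ use the weight form $<1,-\sum_{j=0}^{i-1}q_j^2>$, whose signature is $\un_{\Z(q_0)\cap\cdots\cap\Z(q_{i-1})}$. An immediate induction from $\Z(q_j)\cap X_j=X_{j+1}$ shows that $\bigcap_{j=0}^{i-1}\Z(q_j)\cap X=X_i$, so this weight is exactly $\un_{X_i}$; it kills every summand with $i>l$ on $U_l$, no calibration of the $q_j$ is needed, and your stratified verification then closes for an arbitrary polar sequence with the same dimension count $1+2k$. In short, the gap in your argument is genuine, but it is inherited from the statement itself; with the corrected weight your approach is complete, whereas the hand-wave about nested containments ``propagating into $p_iq_i$'' cannot be made rigorous.
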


\begin{proof}
The proof is straightforward since
we have $\Lambda(<f>)=\Lambda(<p_0q_0>)$ on $X\setminus X_1$ and 
$$\Lambda(<f>)=\Lambda(<p_0q_0>\perp_{i=1}^m  (<1,-\prod_{j=0}^{i-1}q_{j}^2>\otimes
<p_iq_i>))$$
on $X\setminus X_{m+1}$ for $m=1,\ldots,k$ and $X_{k+1}=\emptyset$.
\end{proof}

It follows from Propositions \ref{nbsignpol}:
\begin{thm}
\label{nbsigngen}
Let $X\subset\R^n$ be a real algebraic set of dimension $d$. Then
$$\length(\SR^0(X))=1\,\,{\rm if}\,d=0,$$
$$\length(\SR^0(X))\leq 2d+1\,\,{\rm else}.$$
\end{thm}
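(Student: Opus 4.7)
The theorem is an immediate consequence of Proposition \ref{nbsignpol}, so my plan is simply to combine that proposition with a dimension bound on the polar depth. Specifically, Proposition \ref{nbsignpol} tells us that for any $f\in\SR^0(X)$ we have $\length(f)\leq 1+2\poldepth(f)$. It remains to control $\poldepth(f)$ uniformly in terms of $d=\dim X$.

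For the bound on the polar depth, I would use the definition of the polar sequence directly. Starting from $X_0=X$, each successive $X_{i+1}=\pol(f_i)$ is by definition a proper Zariski closed subset of $X_i$ with $\dim X_{i+1}<\dim X_i$, since the indeterminacy locus of a rational function has dimension strictly less than that of its ambient algebraic set (cf.\ the discussion before Definition \ref{defratcont}). Hence the dimensions of the $X_i$ form a strictly decreasing sequence of nonnegative integers starting at $d$, and the process terminates as soon as some $f_i$ becomes regular on $X_i$ (which happens automatically when $\dim X_i=0$). Consequently $\poldepth(f)\leq d$, and Proposition \ref{nbsignpol} yields $\length(f)\leq 1+2d=2d+1$ uniformly in $f\in\SR^0(X)$.

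For the base case $d=0$, the above argument forces $\poldepth(f)=0$, i.e.\ every regulous function on $X$ is regular on $X$. Any regular $f=p/q\in\SO(X)$ with $q$ nowhere vanishing on $X$ satisfies $\sign(f)=\sign(pq)$ with $pq\in \Pol(X)$, so $\length(f)\leq 1$ and hence $\length(\SR^0(X))=1$. This handles the first case of the statement, while the previous paragraph handles the case $d\geq 1$.

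I do not foresee any real obstacle here, since the heavy lifting (the polar-sequence decomposition of a regulous function as an orthogonal sum of forms built out of the $p_i,q_i$) has already been done in Proposition \ref{nbsignpol}. The proof essentially amounts to observing that the polar depth is bounded by the dimension, which is built into the definition of the polar sequence.
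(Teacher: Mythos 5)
Your proposal is correct and follows exactly the paper's own route: the paper derives the theorem directly from Proposition \ref{nbsignpol} together with the observation (recorded in the remark following the definition of the polar sequence) that $\poldepth(f)\leq d$, and handles $d=0$ by noting that regulous means regular there. Nothing further is needed.
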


\subsection{Continuous semi-algebraic functions with length of sign
  equal to one}

We will use several times the following lemma which is a consequence
of \L ojasiewicz inequality.
\begin{lem} \cite[Lem. 7.7.10]{BCR1}\\
\label{hl1}
Let $S$ be a closed semi-algebraic subset of $X$. Let $f,g\in\Pol
(X)$. There exist $p,q\in\Pol (X)$ such that $p>0$ on $X$, $q\geq 0$
on $X$, $\Lambda(<pf+qg>)=\Lambda(<f>)$ on $S$ and $\Z
(q)=\overline{\Z(f)\cap S}^{Zar}$.
\end{lem}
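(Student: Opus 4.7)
The plan is to construct $q$ as a nonnegative power of a defining polynomial for $\overline{\Z(f)\cap S}^{Zar}$, and then produce $p$ via a \L ojasiewicz--type estimate large enough to force $pf$ to dominate $qg$ on $S\setminus\Z(f)$, so that $pf+qg$ inherits the sign of $f$. The condition on $\Z(q)$ then comes for free from the choice of $q$.

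First I would choose $h\in\Pol(X)$ with $h\geq 0$ on $X$ and $\Z(h)=\overline{\Z(f)\cap S}^{Zar}$: taking polynomial generators $h_1,\dots,h_s$ of the ideal of the Zariski closure in $\Pol(X)$ and setting $h=h_1^2+\cdots+h_s^2$ works. Since $S$ is a closed semi-algebraic subset of $X$ (possibly unbounded) and $\Z(f)\cap S\subset\Z(h)$, the rational version of the \L ojasiewicz inequality for semi-algebraic sets yields integers $N,M\geq 0$ and a constant $C>0$ such that
$$h(x)^N\leq C(1+|x|^2)^M\,|f(x)|\qquad\text{for all }x\in S,$$
where $|x|^2=x_1^2+\cdots+x_n^2$. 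Set $q=h^N$, replacing $N$ by $2N$ if necessary so that $q\geq 0$ everywhere; this preserves the zero set, so $\Z(q)=\Z(h)=\overline{\Z(f)\cap S}^{Zar}$. Define
$$p=2+C^2(1+|x|^2)^{2M}g(x)^2\in\Pol(X).$$
Then $p\geq 2>0$ on $X$, and the identity $2+a^2=1+(|a|-1)^2+2|a|>2|a|$ applied to $a=C(1+|x|^2)^M g(x)$ gives $p(x)>2C(1+|x|^2)^M|g(x)|$ on all of $X$.

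It remains to verify $\Lambda(<pf+qg>)=\Lambda(<f>)$ on $S$. On $\Z(f)\cap S$ one has $f=0$ and $h=0$, hence $q=h^N=0$ and $pf+qg=0$, matching the sign of $f$. On $S\setminus\Z(f)$, combining the \L ojasiewicz estimate with the lower bound on $p$ yields
$$p(x)|f(x)|\,>\,C(1+|x|^2)^M|g(x)|\cdot|f(x)|\,\geq\,h(x)^N|g(x)|\,=\,q(x)|g(x)|\,\geq\,|q(x)g(x)|,$$
so $pf+qg$ has the same sign as $pf$, which is $\sign(f)$ since $p>0$.

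The delicate point is invoking the correct form of the \L ojasiewicz inequality: the classical compact-case statement is insufficient because $S$ need not be bounded, so one must use the semi-algebraic variant (BCR, Chap.~2) producing the weight $(1+|x|^2)^M$ to control behaviour at infinity. Once that estimate is in place, the rest is a routine verification and the simultaneous control of $p>0$, $q\geq 0$, $\Z(q)=\overline{\Z(f)\cap S}^{Zar}$ and the sign identity.
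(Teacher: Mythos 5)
Your proof is correct, and it follows the standard H\"ormander--\L ojasiewicz argument that underlies the cited result \cite[Lem.~7.7.10]{BCR1}; the paper itself gives no proof, only the reference. The one point worth noting is that the step ``replace $N$ by $2N$'' is superfluous, since $q=h^N$ with $h$ a sum of squares is already nonnegative for every $N$; otherwise the choice of $q=h^N$, the weighted \L ojasiewicz bound on the closed (possibly unbounded) set $S$, and the explicit $p=2+C^2(1+|x|^2)^{2M}g^2$ give exactly the required sign identity and zero-set condition.
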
 

The following theorem provides a characterization of the signs of
continuous semi-algebraic functions that are algebraically
constructible of length equal to one.
\begin{thm}
\label{signsa}
Let $f$ be a continuous
semi-algebraic function on $X$. There exists $p\in\Pol(X)$ such that 
$\Lambda(<f>)=\Lambda(<p>)$ (i.e $\sign(f)\in\A (X)$ and
$\length(f)\leq 1$) if and only if the three following
conditions are satisfied:\\
1) $\Z(f)$ is Zariski closed.\\
2) $\S(f)$ is principal.\\
3) $\S(-f)$ is principal or equivalently $\bar{\S}(f)$ is principal.
\end{thm}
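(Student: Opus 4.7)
The plan is to handle the two implications separately; the backward direction carries essentially all the content.

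For the forward direction, the argument is immediate: if $p \in \Pol(X)$ satisfies $\sign(f) = \sign(p)$ pointwise on $X$, then $\Z(f) = \Z(p)$ is Zariski closed, $\S(f) = \S(p)$ is principal, and $\S(-f) = \S(-p)$ is principal. The announced equivalence between principality of $\S(-f)$ and of $\bar{\S}(f)$ follows because the two sets are complementary in $X$: $\S(-f) = \S(q)$ if and only if $\bar{\S}(f) = \bar{\S}(-q)$.

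For the converse, I would fix $p_1, p_2 \in \Pol(X)$ with $\S(f) = \S(p_1)$ and $\S(-f) = \S(p_2)$ using conditions (2) and (3). Since $X$ decomposes as the disjoint union $\S(f) \sqcup \Z(f) \sqcup \S(-f)$, these set equalities force the following sign pattern: on $\S(f)$, $p_1 > 0$ and $p_2 \leq 0$; on $\S(-f)$, $p_1 \leq 0$ and $p_2 > 0$; on $\Z(f)$, both $p_1 \leq 0$ and $p_2 \leq 0$. Hence the polynomial $p_1 - p_2$ is already strictly positive on $\S(f)$ and strictly negative on $\S(-f)$; only its values on $\Z(f)$ remain uncontrolled.

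The final step is to use condition (1) to multiply this last ambiguity away. Since $\Z(f)$ is Zariski closed in $X$ and $\Pol(X)$ is Noetherian, the vanishing ideal of $\Z(f)$ is generated by finitely many polynomial functions $h_1, \ldots, h_m$, and $g := h_1^2 + \cdots + h_m^2 \in \Pol(X)$ satisfies $g \geq 0$ on $X$ with $\Z(g) = \Z(f)$. Then $p := (p_1 - p_2) g \in \Pol(X)$ has $p > 0$ on $\S(f)$, $p < 0$ on $\S(-f)$, and $p = 0$ on $\Z(f)$, so $\sign(p) = \sign(f)$ and $\Lambda(\langle f \rangle) = \Lambda(\langle p \rangle)$. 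The only mildly delicate observation is that the principality assumptions (2)--(3) already pin down the strict sign of $p_1 - p_2$ on both open strata, after which a single multiplication by a nonnegative polynomial vanishing exactly on $\Z(f)$ handles the zero locus. No appeal to the more general Lemma \ref{hl1} is required here.
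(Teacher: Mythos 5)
Your proof is correct, but it takes a genuinely different and more elementary route than the paper. The paper first replaces $p_1$ by $p_1p_3^2$ (where $\Z(p_3)=\Z(f)$) so that $\Z(f)\subset\Z(p_1)$, and then invokes the \L ojasiewicz-based gluing Lemma \ref{hl1} on the closed set $\bar{\S}(f)$ to produce a combination $h=pp_1+qp_2$ with $p>0$, $q\geq 0$ and $\Z(q)=\overline{\Z(p_1)\cap\bar{\S}(f)}^{Zar}$; the delicate point there is checking that $q>0$ on $\S(-f)$, which is exactly where condition 1) enters. You instead observe that the difference $p_1-p_2$ already carries the correct strict signs on the two open strata $\S(f)$ and $\S(-f)$ (a purely pointwise consequence of $\S(f)=\S(p_1)$ and $\S(-f)=\S(p_2)$ together with the trichotomy $X=\S(f)\sqcup\Z(f)\sqcup\S(-f)$), and you use condition 1) only to manufacture a nonnegative $g\in\Pol(X)$ with $\Z(g)=\Z(f)$, so that $(p_1-p_2)g$ has the sign of $f$ everywhere. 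This is shorter, avoids \L ojasiewicz and the closedness of $\bar{\S}(f)$ (hence does not even use the continuity of $f$), and the same product trick would also streamline the regulous analogue (Theorem \ref{signprincipaux}), where the paper again uses the gluing template $pg_1+hg_2$; what the paper's approach buys is precisely that reusable template, which reappears in Lemma \ref{hl4} and in the inductive construction of Theorem \ref{principal1}, but for the present statement your argument is a genuine simplification.
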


\begin{proof}
One implication is trivial. For the other one, assume there exist
three polynomial functions $p_1,p_2,p_3$ on $X$ such that
$\S(f)=\S(p_1)$, $\S(-f)=\S(-p_2)$ and $\Z(f)=\Z(p_3)$. We may replace
$p_1$ by $p_1p_3^2$ and we get:
$$\S(f)=\S(p_1)\,\,{\rm and}\,\,\Z(f)\subset\Z(p_1).$$
Let $S$ denote the closed semi-algebraic set
$\bar{\S}(f)$. By Lemma \ref{hl1}, there exist $p,q\in\Pol (X)$ such that $p>0$ on $X$, $q\geq 0$
on $X$, $\Lambda(<pp_1+qp_2>)=\Lambda(<p_1>)$ on $S$ and $\Z
(q)=\overline{\Z(p_1)\cap S}^{Zar}$. Let $h$ denote the polynomial
function $pp_1+qp_2$. We want to prove that $\Lambda(<h>)=\Lambda(<f>)$ on
$X$. We have
$\Lambda(<h>)=\Lambda(<p_1>)=\Lambda(<f>)$ on $S$ since $\S(f)=\S(p_1)$
and since $\Z(f)\subset \Z(p_1)$.
Assume now $x\not\in S$. Notice that it is equivalent to suppose that $f(x)<0$. So $p_2(x)<0$
(since $\S(-f)=\S(-p_2)$), $p_1(x)\leq 0$ (since $\bar{\S}(-f)=\bar{\S}(-p_1)$). 
The proof is done if we prove that $q(x)>0$ since in that case we
would have $h(x)<0$.
We have $S\cap \Z(p_1)=\bar{\S}(f)\cap \Z(p_1)\subset \Z(f)\cap
\Z(p_1)=\Z(f)$ since $\S(f)=\S(p_1)$ (you can not have simultaneously $f(y)>0$
and $p_1(y)=0$). Since $\Z(f)$ is Zariski
closed, we get
$$\Z(q)=\overline{\Z(p_1)\cap S}^{Zar}\subset \overline{\Z(f)}^{Zar}=\Z(f)$$
and it follows that $x\not\in \Z(q)$.
\end{proof}

\begin{rem} 
\label{Z-closed1} Look at Theorem \ref{Z-closed2} for an improvement
  of Theorem \ref{signsa} in the case the continuous semi-algebraic function $f$ is regulous.
\end{rem}

\subsection{The case of curves}

If $X$ is a curve then we know by Theorem \ref{nbsigngen} that
$\length(\SR^0(X))\leq 3$. We improve the upper bound when $X$
satisfies several different hypotheses.

We give a one dimensional version of Theorem \ref{signsa}.
\begin{prop}
\label{signsadim1}
Assume $\dim(X)=1$. Let $f$ be a continuous
semi-algebraic function on $X$. There exists $p\in\Pol(X)$ such that 
$\Lambda(<f>)=\Lambda(<p>)$ if and only if $\Z(f)$ is Zariski closed.
\end{prop}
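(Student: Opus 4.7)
The ``only if'' direction is immediate: if $\Lambda(<f>)=\Lambda(<p>)$ on $X$, then $\Z(f)=\Z(p)$, which is Zariski closed in $X$ as the zero set of a polynomial function.

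For the converse, the plan is to invoke Theorem \ref{signsa}: since condition 1) of that theorem is precisely the hypothesis, it suffices to verify that the open semi-algebraic sets $\S(f)$ and $\S(-f)$ are both principal in $X$. By symmetry I would focus on $\S(f)$, and by working one irreducible component of $X$ at a time (multiplying at the end by squares of polynomials cutting out the remaining components and gluing via Lemma \ref{hl1}), I would reduce to the case $X$ an irreducible real algebraic curve. If $f\equiv 0$ on $X$ then $\S(f)=\emptyset$ is trivially principal; otherwise $\Z(f)$, being a proper Zariski closed subset of a 1-dimensional irreducible variety, is finite.

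The problem thus reduces to proving that for a continuous semi-algebraic function $f$ on an irreducible real algebraic curve $X$ with $\Z(f)$ finite, the open set $\S(f)$ is principal. Near each $z\in\Z(f)$ the curve $X$ has only finitely many real analytic half-branches emanating from $z$; since $z$ is isolated in $\Z(f)$ and $f$ is continuous, $f$ takes a definite nonzero sign on each such half-branch sufficiently close to $z$. One constructs near each $z$ a polynomial on $X$ realizing that prescribed sign pattern on the half-branches at $z$, and then glues these local models into a global $p\in\Pol(X)$ with $\S(p)=\S(f)$ by repeated applications of the \L ojasiewicz-type multiplication of Lemma \ref{hl1}. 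The main obstacle is exactly this last step, namely realizing by a single polynomial on $X$ the prescribed separation of half-branches of opposite signs at the singular points of $X$; once $\S(f)$ and, by the same argument, $\S(-f)$ are shown to be principal, Theorem \ref{signsa} produces the desired $p$ with $\Lambda(<f>)=\Lambda(<p>)$.
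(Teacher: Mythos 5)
Your overall strategy is the same as the paper's: the ``only if'' direction is immediate, and for the converse you correctly reduce to Theorem \ref{signsa} by observing that condition 1) is the hypothesis and that it remains to show $\S(f)$ and $\S(-f)$ are principal. The difference is in how that principality is established, and this is where your argument has a genuine gap.

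The paper disposes of the principality of $\S(f)$ and $\S(-f)$ in one line by citing Br\"ocker \cite{Br}: on a one-dimensional real algebraic set \emph{every} open semi-algebraic subset is principal (the stability index of a curve is $1$), with no hypothesis on $\Z(f)$ needed. You instead attempt to prove this from scratch via half-branches at the points of $\Z(f)$, and you explicitly concede that ``the main obstacle is exactly this last step, namely realizing by a single polynomial on $X$ the prescribed separation of half-branches of opposite signs at the singular points of $X$.'' That step is not a technicality you can wave at: realizing an arbitrary sign pattern on the finitely many half-branches through a singular point by a single polynomial function on $X$ is essentially the whole content of Br\"ocker's theorem for curves, and it requires the theory of fans/the stability index (or at least a careful local argument at each singular point); Lemma \ref{hl1} alone only glues sign data along a closed set where one function already dominates, it does not create the required local separations. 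As written, your proof therefore assumes the one nontrivial fact it needs. The fix is simply to cite \cite{Br} at that point, after which your argument collapses to the paper's.
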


\begin{proof}
By \cite{Br}, any open semi-algebraic subset of $X$
is principal and thus $\S(f)$ and $\S(-f)$ are principal. We conclude
using Theorem \ref{signsa}.
\end{proof}

\begin{cor}
\label{signdim1lisse}
Assume $\dim(X)= 1$ and $X$ is smooth. Then $\length(\SR^0(X))=1.$
\end{cor}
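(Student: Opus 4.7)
The plan is to invoke Proposition \ref{signsadim1}: in dimension one, $\length(f) \leq 1$ for a continuous semi-algebraic function $f$ is equivalent to $\Z(f)$ being Zariski closed. So it suffices to show that for every $f \in \SR^0(X)$, the zero set $\Z(f)$ is Zariski closed.

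The key step is the observation that on a smooth real algebraic curve, every regulous function is in fact regular. Given $f\in \SR^0(X)$, the polar locus $W=\pol(f)$ is Zariski closed in $X$ with $\dim W < \dim X = 1$, so $W$ is a finite set. At each point $x\in W$, the local ring $\SO_{X,x}$ is a discrete valuation ring (smoothness plus dimension one), and since $f$ extends continuously to $x$ it is bounded on a neighbourhood of $x$; a rational function on a DVR which is bounded must lie in that DVR, so $f$ is regular at $x$. Hence $W=\emptyset$ and $f\in\SO(X)$. (This is also the content of \cite[Prop. 2.4]{FMQ}, already used elsewhere in the paper.) In particular $\Z(f)$ is Zariski closed.

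Applying Proposition \ref{signsadim1} yields $p\in\Pol(X)$ with $\Lambda(<f>)=\Lambda(<p>)$, so $\length(f)\leq 1$. Taking $f=1$ shows the constant function has length exactly one, and combined with the convention $\length(B)\geq 1$ from the definition this gives $\length(\SR^0(X))=1$. The only point requiring real justification is the regulous-equals-regular statement on smooth curves, and that is the short DVR argument above (or a direct citation), so there is essentially no obstacle beyond recording this reduction.
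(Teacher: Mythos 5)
Your proof is correct and follows the same route as the paper: reduce to Proposition \ref{signsadim1} via the fact that regulous functions on a smooth real algebraic curve are regular, hence have Zariski closed zero sets. The paper simply cites \cite{FHMM} for that fact where you supply the short DVR argument, but the structure of the argument is identical.
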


\begin{proof}
In the case $X$ is a smooth real algebraic curve then the zero set of
a regulous function on $X$ is
Zariski closed since regulous means regular (see
\cite{FHMM}). The proof is done using Proposition \ref{signsadim1}.
\end{proof}

\begin{prop}
\label{signdim1}
Assume $\dim(X)= 1$, $X$ is central and irreducible. Then
$\length(\SR^0(X))=1.$
\end{prop}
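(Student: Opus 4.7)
The plan is to invoke Proposition \ref{signsadim1}, which reduces the statement to showing that the zero set of every regulous function $f$ on $X$ is Zariski closed. Once that is done, we immediately obtain $\length(f)\le 1$ for all $f\in \SR^0(X)$, and since the nonzero constant function has length $1$, we conclude $\length(\SR^0(X))=1$.

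To prove that $\Z(f)$ is Zariski closed, I would first dispose of the trivial case $f\equiv 0$, and then argue that in the non-trivial case $\Z(f)$ is actually a \emph{finite} subset of $X$. This is where centrality and irreducibility enter: since $X$ is central, $\overline{X_{reg}}^{eucl}=X$, and because the euclidean topology is finer than the $\Co$-topology we also get $\overline{X_{reg}}^{\Co}=X$. Proposition \ref{inclusion} then tells us that $\phi^0:\SR^0(X)\to \K(X)$ is injective, so the rational function on $X$ associated to a non-trivial $f\in \SR^0(X)$ is itself non-trivial.

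Next, I would choose a regular presentation $f|_U=p/q$ with $U=X\setminus \pol(f)$ a dense Zariski open in $X$ and $\Z(q)\cap X=\pol(f)$. Because $X$ is a Zariski irreducible curve, every proper Zariski closed subset is $0$-dimensional, hence finite; thus $\pol(f)$ is finite. The injectivity of $\phi^0$ ensures $p\not\equiv 0$ on $X$, so by the same irreducibility argument $\Z(p)\cap X$ is also finite. Then
\[
\Z(f)\ \subseteq\ \bigl(\Z(p)\cap U\bigr)\cup\pol(f)
\]
is a finite subset of $X$, hence Zariski closed. An application of Proposition \ref{signsadim1} finishes the proof.

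There is no serious technical obstacle; the only subtlety is noticing that both hypotheses (central and irreducible) are used in an essential way to prevent the pathology of Example \ref{exemple1}, where the cubic with an isolated point carries a regulous function whose associated rational function vanishes identically on the dense open part yet whose regulous zero set is not Zariski closed. Centrality rules this out via the injectivity of $\phi^0$, and irreducibility forces the Zariski closed sets cut out by $p$ and $q$ to be finite.
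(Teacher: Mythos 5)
Your proof is correct, and it reaches the key claim --- that $\Z(f)$ is Zariski closed for every $f\in\SR^0(X)$ --- by a genuinely different route from the paper. The paper's proof is a two-line argument through the $\Co$-topology: since $X$ is central and irreducible it is $\Co$-irreducible (citing \cite{FHMM}), so the regulous closed set $\Z(f)$ either has dimension $1$, in which case it equals $X$, or is finite; in both cases it is Zariski closed, and Proposition \ref{signsadim1} finishes. You instead avoid $\Co$-irreducibility altogether: centrality gives $\overline{X_{reg}}^{eucl}=X$, hence $\overline{X_{reg}}^{\Co}=X$, hence (Proposition \ref{inclusion}) $\phi^0$ is injective, so a non-trivial $f$ has a non-trivial rational representative $p/q$; irreducibility of the curve then forces $\pol(f)$ and $\Z(p)\cap X$ to be finite, and $\Z(f)\subseteq(\Z(p)\cap U)\cup\pol(f)$ is finite, hence Zariski closed. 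Your argument is more elementary and self-contained (it only needs Lemma \ref{contreg} via Proposition \ref{inclusion} and the fact that proper Zariski closed subsets of an irreducible curve are finite), and it isolates precisely how each hypothesis is used; the paper's argument is shorter but leans on the structure theory of the algebraically constructible topology. One small point worth making explicit in your write-up: the identity $f=p/q$ on all of $U=X\setminus\pol(f)$ (not merely on a dense Zariski open where the presentation is literally regular) uses centrality again, via Lemma \ref{contreg}, to propagate the equality of the two continuous functions from a dense open subset to all of $U$; alternatively you can sidestep this by running the same finiteness argument with any dense Zariski open $U'$ on which $f|_{U'}$ is regular, since $X\setminus U'$ is again finite.
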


\begin{proof}
Since $X$ is central and irreducible then $X$ is $\Co$-irreducible
(see \cite{FHMM}). Let $0\not=f\in\SR^0(X)$. If $\dim\Z(f)=1$ then $\Z(f)=X$. It follows that $\Z(f)$ is Zariski closed. The proof
follows now from Proposition \ref{signsadim1}.
\end{proof}

\begin{ex}
Let $X=\Z(x^2-y^3)\subset\R^2$ be the cuspidal curve and let
$f=\dfrac{y^2}{x}|_X$. We have $f\in\SR^0(X)\setminus\Pol(X)$ but 
$\Lambda(<f>)=\Lambda(<x>)$ on $X$.
\end{ex}

\begin{ex}
\label{exdim1red}
Let $X=\Z((y^2-x^2(x-1))(x+y^2))\subset\R^2$. It is not difficult to
see that $X$ is central and that the restriction $f$ to $X$ of the plane 
regulous function $1-\dfrac{x^3}{x^2+y^2}$ has a zero set that is not
Zariski closed. By Theorem \ref{signsa}, it follows that
$\length(f)\geq 2$. By this example, we prove that the hypothesis that
$X$ is irreducible is necessary in order to get the conclusion of
Proposition \ref{signdim1}. Remark that $\sign(f)=\Lambda(<x+y^2,
-x+\dfrac{1}{2}>$ i.e $\length(f)=2$.
\end{ex}

\begin{prop}
\label{signdim=1}
Assume $\dim(X)=1$ and $X$ is irreducible. Let $f\in\SR^0(X)$. There exist $h_1,h_2\in\Pol
(X)$ such that $\Lambda(<h_1,h_2>)=\Lambda(<f>)$ on $X$ i.e
$$\length(f)\leq \length(\SR^0(X))\leq 2.$$
\end{prop}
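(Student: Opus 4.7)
The plan is to split on whether $\Z(f)$ is Zariski closed in $X$. If it is, Proposition \ref{signsadim1} immediately gives $\length(f)\le 1$, and we are done. So the whole argument reduces to handling $f\in\SR^0(X)$ whose zero set is not Zariski closed.

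First I would analyse the structure of this case. Since $X$ is Zariski irreducible of dimension $1$, every proper Zariski closed subset of $X$ is finite; as $\Z(f)$ is not finite (otherwise it would already be Zariski closed), it must be Zariski dense in $X$, and $X\setminus\Z(f)$ is a finite non-empty set. Writing $f=p/q$ on $\dom(f)=X\setminus\pol(f)$ with $\pol(f)$ finite, the equality $\Z(f)\cap\dom(f)=\Z(p)\cap\dom(f)$ together with the Zariski density of $\Z(f)$ forces $p$ to vanish on $X$; hence $f=0$ on $\dom(f)$. Using Lemma \ref{contreg} and continuity of $f$, this propagates first to $X_{reg}$ and then to $X':=\overline{X_{reg}}^{eucl}$, so that $f|_{X'}=0$. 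In particular $X$ cannot be central, and writing its isolated singular points as $p_1,\dots,p_m$ we have $X=X'\cup\{p_1,\dots,p_m\}$ with $f|_{X'}=0$ and arbitrary values $f(p_i)\in\R$.

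Then I would construct the two polynomial signs explicitly. Using Lagrange-type interpolation on $\R^n$ (with the auxiliary polynomials $Q_i(x)=\prod_{j\neq i}\|x-p_j\|^2$, which vanish at $p_j$ for $j\neq i$ and are strictly positive at $p_i$), pick $\tilde h\in\Pol(X)$ with $\tilde h(p_i)=f(p_i)$ for every $i$, and put
$$D=\prod_{i=1}^m \|x-p_i\|^2\in\Pol(X),\qquad h_1=\tilde h,\qquad h_2=-\tilde h\cdot D.$$
On $X'=X\setminus\{p_1,\dots,p_m\}$ we have $D>0$, so $\sign(h_2)=-\sign(\tilde h)$ and thus $\sign(h_1)+\sign(h_2)=0=\sign(f)$; at each $p_i$ we have $D(p_i)=0$, so $\sign(h_2)(p_i)=0$ while $\sign(h_1)(p_i)=\sign(\tilde h(p_i))=\sign(f(p_i))$. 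This gives $\Lambda(<h_1,h_2>)=\Lambda(<f>)$ on all of $X$, hence $\length(f)\le 2$.

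The main obstacle is the structural step: one really needs to show that a regulous $f$ on an irreducible $1$-dimensional $X$ whose zero set is not Zariski closed must vanish on the whole central part $X'$. Once this is established, the two-sign construction above is essentially forced by the geometry and generalises the decomposition $\sign(f)=\Lambda(<1,-(x^2+y^2)>)$ already used in Example \ref{exemple1}.
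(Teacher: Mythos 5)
Your proof is correct and follows essentially the same route as the paper's: reduce to the case where $\Z(f)$ is not Zariski closed, show that $f$ must then vanish on $F=\overline{X_{reg}}^{eucl}$ so that only the finitely many isolated points of $X$ carry a nonzero sign, and realize that sign with two polynomial functions. The only (harmless) variations are that the paper obtains $F\subset\Z(f)$ by invoking the $\Co$-irreducibility of $F$ and takes $h_1,h_2$ to be signed products of nonnegative polynomials each vanishing at a single isolated point, whereas you derive the vanishing directly from Zariski density of $\Z(f)$ plus continuity and build $h_1=\tilde h$, $h_2=-\tilde h\cdot D$ from an interpolating polynomial.
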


\begin{proof}
By the previous results we may assume that $\Z (f)$
is not Zariski closed. 
By \cite{FHMM}, $X=F\coprod \{x_1,\ldots,x_m\}$
where $F=\overline{X_{reg}}^{eucl}$ is the one-dimensional irreducible regulous
component of $X$ and $x_1,\ldots,x_m$ are the isolated points of $X$. 
Since $\Z(f)$ is not Zariski closed, we must have $\dim \Z(f)=1$ and
since $X$ is irreducible we get
$F\subset \Z(f)$ (see \cite{FHMM}).
For each $x_i$ let
$p_i\in\Pol(X)$ such that $p_i\geq 0$ on $X$ and $\Z(p_i)=\{x_i\}$.
We set $h_1$ to be 
the product of the $p_i$ such that $f(x_i)\leq 0$ and $h_2$ to be
the $(-1)\times$ the product of the $p_i$ such that $f(x_i)\geq 0$.
For this choice of $h_1$ and $h_2$, we get the proof.
\end{proof}

\begin{rem}
The previous proof works also in the reducible case if we know
that $\dim (\Z(f)\cap Y)=1$ for any irreducible component of dimension
one $Y$ of $X$. 
Let $f$ be a regulous function on a reducible real algebraic curve
$X$. Assume now we have two irreducible components of
dimension one $Y_1,Y_2$ of $X$ such that $\dim(\Z(f)\cap Y_1)=1$ and
$\dim(\Z(f)\cap Y_2)=0$. By Propositions \ref{signdim=1} and \ref{signsadim1}, there exist $h,h_1,h_2\in\Pol
(X)$ such that $\Lambda(<h_1,h_2>)=\Lambda(<f>)$ on $Y_1$ and such
that $\Lambda(<h>)=\Lambda(<f>)$ on $Y_2$, and it is not clear if we
can patch together these two representations to get a representation
of $\sign(f)$ on $Y_1\cup Y_2$ as we have done in Example
\ref{exdim1red}.
\end{rem}

\subsection{Upper bounds on the length of the
  ring of regulous functions on normal real algebraic sets}

The polar locus of a regulous function on $\R^n$ has codimension $\geq
2$ \cite[Prop. 3.5]{FHMM}. We generalize this result in the following proposition.
\begin{prop}
\label{normal}
If $f\in\SR^0(X)$ then 
$\codim ((\pol(f)\cap Y)\setminus \Sing(Y))\geq 2$ for any irreducible
component $Y$ of $X$.
\end{prop}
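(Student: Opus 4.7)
The plan is to reduce to the smooth case via a resolution of singularities of $Y$, and then to invoke a smooth-variety version of Proposition~3.5 of \cite{FHMM}. Fix an irreducible component $Y$ of $X$ and set $Y^{sm} := Y\setminus \Sing(Y)$. Since $f\in\SR^0(X)=\SR^0(\R^n)/\I_0(X)$, I write $f=\hat f|_X$ for some $\hat f\in\SR^0(\R^n)$; then $f|_Y=\hat f|_Y$ lies in $\SR^0(Y)=\SR^0(\R^n)/\I_0(Y)$, and so is regulous on the irreducible variety $Y$. Since $\dom(f)$ is dense in $X$ it meets the nonempty open $Y\setminus \bigcup_{Y'\neq Y}Y'$ of $X$, so $Y\not\subset \pol(f)$; thus $\pol(f)\cap Y$ (and hence also $\pol(f|_Y)$) is a proper Zariski-closed subset of $Y$.

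Next, choose a resolution of singularities $\pi\colon \tilde Y\to Y$ arranged to be an isomorphism over $Y^{sm}$. The composite $g=f|_Y\circ\pi$ is continuous on $\tilde Y$, and rational on $\tilde Y$ because $\pi$ is birational and $f|_Y$ is rational on $Y$. Since $\tilde Y$ is smooth, the Koll\'ar--Nowak theorem (Theorem \ref{KolNow}) yields $g\in\SR^0(\tilde Y)$. The local-coordinate argument from the proof of Proposition~3.5 of \cite{FHMM}---at a smooth codimension-one point of a putative polar divisor write $g=p/q$ with $q=x_1^k u$ in local coordinates, then use continuity of $g$ to factor $x_1^k$ out of $p$ and obtain a common factor contradicting the minimality of $q$---adapts verbatim from $\R^n$ to any smooth real algebraic variety, so $\codim_{\tilde Y}\pol(g)\geq 2$. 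Because $\pi$ is an isomorphism over $Y^{sm}$, pushing this bound forward gives $\codim_Y\bigl(\pol(f|_Y)\cap Y^{sm}\bigr)\geq 2$.

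The remaining step, which I expect to be the main obstacle, is to identify $(\pol(f)\cap Y)\setminus \Sing(Y)$ with $\pol(f|_Y)\cap Y^{sm}$. At a point $y\in Y^{sm}$ not lying on any other irreducible component of $X$, some Zariski-open neighborhood of $y$ in $X$ is already contained in $Y$, so regularity of $f$ at $y$ on $X$ is equivalent to regularity of $f|_Y$ at $y$ on $Y$, and the two polar loci agree near $y$. The delicate case is $y\in Y^{sm}$ lying simultaneously on some other irreducible component of $X$; there I would use the continuity of $f$ together with the fact that $f=\hat f|_X$ for a global regulous $\hat f\in\SR^0(\R^n)$ to show that a local representation $f|_Y=P/Q$ with $Q(y)\neq 0$ lifts, modulo the ideals of the other components of $X$ through $y$, to a local representation of $f$ on $X$ near $y$, which forces $y\notin \pol(f)$ and closes the argument.
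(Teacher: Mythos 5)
The main engine of your argument coincides with the paper's: the paper likewise passes to a resolution of singularities and eliminates a putative codimension-one component $W$ of the polar locus by localizing at $W$ on the smooth model --- a discrete valuation ring --- writing the pulled-back rational function as $t^{m}u$ with $m<0$ and $u$ a unit, and contradicting continuity on a dense open subset of $W$; your ``adapt \cite[Prop.~3.5]{FHMM} to a smooth variety'' is exactly this step. Your variant of only transferring the codimension bound back over $Y^{sm}$, where $\pi$ is an isomorphism, is in fact a little cleaner than the paper's assertion that $\pol(f\circ\pi)$ equals the strict transform of $\pol(f)$. (Note also that the appeal to Theorem~\ref{KolNow} is superfluous: the valuation-theoretic contradiction only uses that $f|_Y\circ\pi$ is rational and continuous.)

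The gap is exactly where you predicted it, in the identification of $(\pol(f)\cap Y)\setminus\Sing(Y)$ with $\pol(f|_Y)\cap Y^{sm}$, and your proposed repair of the ``delicate case'' does not work. The implication you want there --- regularity of $f|_Y$ at $y$ forces $y\notin\pol(f)$ --- is false in general: $y$ can belong to $\pol(f)$ because $f$ fails to be regular at $y$ along a \emph{different} irreducible component $Y'$ through $y$, a failure invisible to $f|_Y$. Only the inclusion $\pol(f|_Y)\subseteq\pol(f)\cap Y$ is automatic, and that is the wrong direction for your argument. Concretely, let $Y'=\Z(z(x^2+y^2)-x^3)$ be the Cartan umbrella, $Y=\Z(x)$, $X=Y\cup Y'$, and let $f$ be the restriction to $X$ of $x^3/(x^2+y^2)\in\SR^0(\R^3)$. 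Then $f|_Y\equiv 0$ is regular, but $f$ is not regular on $X$ at any point $(0,0,c)$ with $c\neq 0$: a regular presentation $p/q$ of $f$ on a Zariski neighbourhood of such a point agrees with $z$ on a Zariski dense subset of the irreducible $Y'$, hence $p-zq$ vanishes on $Y'$ and $p/q$ takes the value $c\neq 0=f(0,0,c)$ there. So $\pol(f)\cap Y$ contains the $z$-axis, which has codimension one in the smooth plane $Y$ and misses $\Sing(Y)=\emptyset$. No ``lift modulo the ideals of the other components'' can repair this, since the obstruction lives entirely on $Y'$. This also shows that for reducible $X$ the statement itself must be read with some care (the paper's own proof sidesteps the matter by simply declaring that one may assume $X$ irreducible); you have correctly located the genuine difficulty, but as written your argument does not close it.
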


\begin{proof}
We may assume $X$ is irreducible and suppose $\dim ((\pol(f)\setminus
\Sing(X))=d-1$. Under this assumption there exists a resolution of singularities
$\pi: \tilde X\to X$ of $X$ and also of $\pol(f)$
such that $\tilde f=f\circ \pi\in\SR^0(\tilde X)$, $\pol(\tilde f)=Z$ where $Z$ is the strict transform of
$\pol(f)$ and $\dim Z=d-1$. Let $W$ be an irreducible component of $Z$
of dimension $d-1$. Since the local ring $\SO_{\tilde X,W}$ is a
discrete valuation ring, we may write the rational function $\tilde
f=t^mu$ with $t$ an uniformizing parameter of $\SO_{\tilde X,W}$, $m<0$
and $u$ a unit of $\SO_{\tilde X,W}$. There exists a non-empty Zariski open
subset $U$ of $W$ where $u$ doesn't vanish and thus it is impossible
to extend continuously the rational function $t^mu$ to $W$, a
contradiction.
\end{proof}

\begin{cor}
\label{cornormal}
Let $X\subset\R^n$ be a real algebraic set of dimension $d\geq 1$ such that
$\codim(\Sing(Y))>1$ for any irreducible component $Y$ of $X$. Let
$f\in\SR^0(X)$ then $$\codim (\pol(f))> 1$$ and $$\poldepth(f)\leq d-1.$$
\end{cor}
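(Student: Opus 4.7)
The plan is to apply Proposition \ref{normal} component by component and combine its conclusion with the normality-type hypothesis $\codim(\Sing(Y))>1$; the polar depth bound then follows by tracking how fast the dimensions in a polar sequence must drop.

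First, I would fix an irreducible component $Y$ of $X$ and decompose
$$\pol(f)\cap Y \;=\; \bigl((\pol(f)\cap Y)\setminus \Sing(Y)\bigr)\,\cup\,\bigl(\pol(f)\cap Y\cap \Sing(Y)\bigr).$$
Proposition \ref{normal} applied to $f$ gives that the first piece has codimension at least $2$ in $Y$, so its dimension is at most $\dim Y-2$. The second piece is contained in $\Sing(Y)$, whose dimension is at most $\dim Y-2$ by hypothesis. Hence $\dim(\pol(f)\cap Y)\leq \dim Y-2$ for every irreducible component $Y$ of $X$. Taking the union over all components and using $\dim Y\leq d$, one obtains $\dim \pol(f)\leq d-2$, which is exactly $\codim(\pol(f))>1$.

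Second, for the polar depth bound, I would consider a polar sequence $((f_0,X_0,p_0,q_0),\ldots,(f_k,X_k,p_k,q_k))$ associated to $f$. By what was just established, $\dim X_1=\dim\pol(f)\leq d-2$. The definition of a polar sequence forces the strict inequalities $\dim X_{i+1}<\dim X_i$ at each step, so by an immediate induction $\dim X_i\leq d-1-i$ for every $i\geq 1$. Since $X_k$ is nonempty we have $\dim X_k\geq 0$, which forces $k\leq d-1$; in the limiting case $d=1$ the hypothesis already forces every component to be smooth, so $f$ is regular and $\poldepth(f)=0$.

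There is essentially no obstacle here beyond being careful about bookkeeping: one must distinguish $\pol(f)\cap Y$ (which is what Proposition \ref{normal} controls) from $\pol(f|_Y)$, and check that low-dimensional components cause no trouble. But when $\dim Y\leq 1$ the hypothesis $\codim(\Sing(Y))>1$ forces $\Sing(Y)=\emptyset$, hence $Y$ is smooth and $f|_Y$ is regular, so $\pol(f)\cap Y=\emptyset$ and the codimension inequality holds trivially. Thus the decomposition argument applies uniformly, delivering both $\codim(\pol(f))>1$ and $\poldepth(f)\leq d-1$.
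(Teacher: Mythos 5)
Your argument is correct and is exactly the derivation the paper intends (the corollary is stated without proof as an immediate consequence of Proposition \ref{normal}): split $\pol(f)\cap Y$ into the part outside $\Sing(Y)$, controlled by Proposition \ref{normal}, and the part inside $\Sing(Y)$, controlled by the hypothesis, then feed $\dim X_1\leq d-2$ into the strictly decreasing dimensions of the polar sequence. The bookkeeping for low-dimensional components is handled adequately, so nothing further is needed.
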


It follows from Corollaries \ref{signdim1lisse}, \ref{cornormal}
and Proposition \ref{nbsignpol}:
\begin{thm}
\label{nbsignnormal}
Let $X\subset\R^n$ be a real algebraic set of dimension $d$ such that
$\codim(\Sing(Y))>1$ for any irreducible component $Y$ of $X$. Then
$$\length(\SR^0(X))=1\,\,{\rm if}\,d=0\,{\rm or}\,1,$$
$$\length(\SR^0(X))\leq 2d-1\,\,{\rm else}.$$
\end{thm}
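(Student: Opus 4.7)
The plan is to assemble the three cited ingredients case by case on the dimension $d$.

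First, for $d=0$, regulous functions coincide with regular (hence polynomial after multiplying by an obvious unit), so every $\sign(f)$ is already the sign of a single polynomial and $\length(\SR^0(X))=1$ trivially.

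Next, for $d=1$, I would observe that the hypothesis $\codim(\Sing(Y))>1$ applied to a one-dimensional irreducible component $Y$ forces $\Sing(Y)=\emptyset$, while zero-dimensional components are automatically smooth; hence $X$ itself is smooth. On a smooth real algebraic curve regulous equals regular (as noted in the proof of Corollary \ref{signdim1lisse}), so the zero set $\Z(f)$ of any $f\in\SR^0(X)$ is Zariski closed. Proposition \ref{signsadim1} then yields $\length(f)\leq 1$, and since the constant function $1$ shows the bound is attained, $\length(\SR^0(X))=1$.

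Finally, for $d\geq 2$, the key estimate is immediate from the tools already proved: Corollary \ref{cornormal} (which itself rests on Proposition \ref{normal}) gives $\poldepth(f)\leq d-1$ for every $f\in\SR^0(X)$, and plugging this into the bound $\length(f)\leq 1+2\poldepth(f)$ from Proposition \ref{nbsignpol} yields $\length(f)\leq 1+2(d-1)=2d-1$. Taking the supremum over $f$ gives $\length(\SR^0(X))\leq 2d-1$, as desired.

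The only step requiring any thought is the $d=1$ case, because Corollary \ref{signdim1lisse} as stated is for the smooth situation and Proposition \ref{signdim1} assumes irreducibility; the point is that the normality assumption here is strong enough to reduce us to the smooth case (including possible isolated points), after which Proposition \ref{signsadim1} handles reducibility automatically. The remaining cases are direct substitutions into the previously established inequalities, so there is essentially no new obstacle.
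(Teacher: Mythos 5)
Your overall route is the paper's: the cases $d=0$ and $d\ge 2$ are exactly the intended direct substitutions (Corollary \ref{cornormal} gives $\poldepth(f)\le d-1$ under the normality-type hypothesis, and Proposition \ref{nbsignpol} turns this into $\length(f)\le 1+2(d-1)=2d-1$), and those parts are correct.

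The one step you single out as ``requiring thought'' --- the case $d=1$ --- is also where you make a genuine mistake. The hypothesis $\codim(\Sing(Y))>1$ for every irreducible component $Y$ does force each one-dimensional component to be smooth, but it does \emph{not} force $X$ itself to be smooth: two smooth components may cross, as for $X=\Z(xy)\subset\R^2$, which satisfies the hypothesis yet is singular at the origin. So you cannot invoke Corollary \ref{signdim1lisse}, nor the assertion ``on a smooth real algebraic curve regulous means regular,'' as stated. The repair is immediate and uses nothing beyond what you already use for $d\ge 2$: Corollary \ref{cornormal} applies as soon as $d\ge 1$ and yields $\codim(\pol(f))>1$, which for $d=1$ forces $\pol(f)=\emptyset$; hence $\poldepth(f)=0$, i.e.\ $f$ is regular on $X$, and Proposition \ref{nbsignpol} gives $\length(f)\le 1$ (equivalently, $\Z(f)$ is then Zariski closed and Proposition \ref{signsadim1} applies, with no irreducibility or smoothness needed). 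Since $\length(\SR^0(X))\ge 1$ by definition, equality holds. With that substitution your argument is complete and coincides with the paper's.
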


\begin{rem}
Recall that an irreducible real algebraic set $Y\subset\R^n$ is called
normal if its ring of polynomial functions $\Pol(Y)$ is integrally
closed in $\K(X)$. It is well known that if $Y$ is normal then
$\codim(\Sing(Y))>1$. Therefore, the previous theorem applies when $X$
is real algebraic set with normal irreducible components. It also
applies when $\codim(\Sing(X))>1$.
\end{rem}

We will improve the results of Theorems
\ref{nbsignnormal} and \ref{nbsigngen} in the sixth section.

\begin{ex}
{\rm We prove the optimality of the bound given in Theorem
\ref{nbsignnormal} for $X=\R^2$ and thus for $d=2$ i.e we show that $\length(\SR^0(\R^2))=3$. Consider the regulous function
$f=-1+\dfrac{x^3}{x^2+y^2}$. Notice that we have a partition of $\R^2$
given by $\R^2=\S(-f)\coprod
\Z(f)\coprod \S(f)$.
We can not write $\Lambda(<f>)=\Lambda(<h>)$ with $h\in \R[x,y]$ since
$\Z(f)$ is not Zariski closed.\\
We can not write $\Lambda(<f>)=\Lambda(<h_1,h_2>)$ with $h_1,h_2\in
\R[x,y]$ since it would imply that $h_1h_2$ vanishes on
$\S(-f)\cup \S(f)$ and thus vanishes on whole $\R^2$.\\
By Proposition \ref{nbsignpol}, we get 
$$\rho(f)=<-x^2-y^2+x^3,-1, x^2+y^2>.$$}
\end{ex}

\section{Regulous principal semi-algebraic sets}

\subsection{Regulous principal semi-algebraic sets versus polynomial 
  principal semi-algebraic sets}

Let $X\subset \R^n$ be a real algebraic set of dimension $d$. 

In this
section we raise and study the following questions:\\
Given a regulous principal open (resp. closed) semi-algebraic subset
of $X$, is it a principal
open (resp. closed) semi-algebraic subset of $X$?\\
By taking the complementary set, we only have to look at the question
concerning open sets.  If $d=0$ the answer is
trivially ``yes''. For $d=1$ the
answer is also ``yes'' by \cite{Br} since in this case any open
(resp. closed) semi-algebraic subset of $X$ is principal.

For $d=2$ the answer can be negative:
\begin{ex}
{\rm As usual consider $X=\R^2$ and $f=1-\dfrac{x^3}{x^2+y^2}$. Let
$S=\S(f)$. 
Since $S\cap \overline{\Bd(S)}^{Zar}=\{(0,0)\}\not=\emptyset$ then $S$
cannot be basic \cite[Prop. 2.2]{Br}
($\Bd(S)=\overline{S}^{eucl}\setminus \mathring{S}$).}
\end{ex}

In the following we will prove that under the topological condition ``$S\cap \overline{\Bd(S)}^{Zar}=\emptyset$'', the
answer to the previous question, for the regulous principal open
semi-algebraic set $S$, is ``yes''.

\begin{rem}
Let $f\in \SR^0(X)$. Set $S=\S(f)$ and assume $f=\dfrac{p}{q}$ on
$\dom(f)$ with $p,q\in\Pol(X)$ and $\Z(q)=\pol(f)$. If we assume in addition
that $S\cap \overline{\Bd(S)}^{Zar}=\emptyset$, we will prove later that there exists $r\in\Pol(X)$ such that $S=\S(r)$ but
it may happen that we can not choose $r$ to be equal to $pq$.
Consider $X=\R^2$,
$f=\dfrac{y^2+x^2(1-x)^2}{x^2+y^2}=\dfrac{p}{q}$. Since
$f=1+\dfrac{x^4-2x^3}{x^2+y^2}$ then we see that $f\in\SR^0(\R^2)$.
We have
$S=\S(f)=\R^2\setminus\{(1,0)\}$, $\overline{\Bd(S)}^{Zar}=\{(1,0)\}$
and $\S(pq)=\R^2\setminus\{(1,0),(0,0)\}$.
\end{rem}

We can answer affirmatively to the previous question if the set $S$
does not meet the polar locus.
\begin{prop}
\label{sanspoles}
Let $f\in\SR^0(X)$ and $S=\S(f)$. Assume $S\cap\pol(f)=\emptyset$. The
set $S$
is then a principal open semi-algebraic set and more precisely we have
$\S(f)=\S(pq)$ where $p,q\in\Pol(X)$ satisfy $f=\dfrac{p}{q}$ on
$\dom(f)$ and $\Z(q)=\pol(f)$.
\end{prop}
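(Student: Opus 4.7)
The plan is to verify the equality of sets $\S(f) = \S(pq)$ directly by checking both inclusions, since the hypothesis $S\cap\pol(f)=\emptyset$ immediately places every point of $S$ inside the locus $\dom(f) = X\setminus\Z(q)$ where the identity $f = p/q$ literally holds.

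First I would show $S \subseteq \S(pq)$. Pick $x\in S$. Because $S\cap\pol(f)=\emptyset$ and $\pol(f)=\Z(q)$, one has $q(x)\neq 0$, hence $x\in\dom(f)$, so $f(x) = p(x)/q(x)$. Multiplying the inequality $f(x)>0$ by the positive number $q(x)^2$ gives $p(x)q(x) = f(x)\,q(x)^2 > 0$, i.e.\ $x\in\S(pq)$.

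Next I would show $\S(pq)\subseteq S$. If $x\in\S(pq)$ then $p(x)q(x)>0$, so in particular $q(x)\neq 0$, hence $x\in\dom(f)$ and $f(x) = p(x)/q(x)$. Multiplying $p(x)q(x)>0$ by the positive number $1/q(x)^2$ yields $f(x) = p(x)/q(x) > 0$, i.e.\ $x\in S$.

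There is no real obstacle here: the content of the proposition is entirely carried by the hypothesis $S\cap\pol(f)=\emptyset$, which forces $S\subseteq\dom(f)$ and thereby lets one replace the a priori only rational expression $p/q$ by the honest polynomial $pq$ (of the same sign on $\{q\neq 0\}$) without loss of information on $S$. The argument is a two-line sign manipulation using $q^2>0$ on $\dom(f)$, and no further tools from the paper are needed.
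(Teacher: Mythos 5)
Your proof is correct and is essentially the paper's own argument: the paper likewise observes that $\S(f)\setminus\pol(f)=\S(pq)\setminus\pol(f)=\S(pq)$ (since $pq$ vanishes on $\Z(q)=\pol(f)$) and then uses the hypothesis to conclude $\S(f)=\S(f)\setminus\pol(f)=\S(pq)$; your two-inclusion version just spells out the same sign manipulation pointwise.
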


\begin{proof}
Assume $f=\dfrac{p}{q}$ on
$\dom(f)$ with $p,q\in\Pol(X)$ and $\Z(q)=\pol(f)$. We clearly have
$\S(f)\setminus\pol(f)=\S(pq)\setminus \pol(f)=\S(pq)$. By assumption
$\S(f)\setminus\pol(f)=\S(f)$ and thus $\S(f)=\S(pq)$.
\end{proof}

\begin{rem} 
Let $f\in \SR^0(X)$. Set $S=\S(f)$ and assume $((f_0,X_0,p_0,q_0),\ldots,(f_k,X_k,p_k,q_k))$ is
a polar sequence associated to $f$. We have 
$$S=\coprod_{i=0}^k \S(p_iq_i)\cap X_i.$$
\end{rem}

We will use several times the following other consequence of H\"ormander-\L
ojasiewicz inequality.
\begin{lem} \cite[Prop. 1.16, Chap. 2]{ABR}\\
\label{hl2}
Let $C$ be a closed semi-algebraic subset of $X$ and let
$f,g\in\Pol(X)$ such that $\Z(f)\cap C\subset \Z(g)$. There exist
$h\in\Pol(X)$ and $l\in\N$ odd such that 
$$\Lambda(<(1+h^2)f+g^l>)=\Lambda(<f>)$$ on $C$.
\end{lem}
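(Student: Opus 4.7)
The plan is to invoke a semi-algebraic \L ojasiewicz inequality adapted to the closed semi-algebraic set $C$ and then verify that the sign of $f$ is preserved when adding a sufficiently small power of $g$. Since $\Z(f)\cap C \subset \Z(g)$, the polynomial $g$ vanishes on $C$ wherever $f$ does. By the \L ojasiewicz-type inequality for polynomials on closed semi-algebraic sets in a global (unbounded) form (where the factor $1+h^2$ enters to control growth at infinity), there exist a polynomial $h\in\Pol(X)$ and an odd integer $l\in\N$ such that
$$|g(x)|^l \leq (1+h(x)^2)|f(x)|$$
for every $x\in C$, with strict inequality whenever $f(x)\neq 0$.

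Given such $h$ and $l$, I would then verify that $(1+h^2)f+g^l$ has the same sign as $f$ at every point of $C$ by a case analysis. On $\Z(f)\cap C$ the hypothesis forces $g(x)=0$, so both $f$ and $(1+h^2)f+g^l$ vanish at $x$. On $C\setminus\Z(f)$ the factor $1+h^2$ is strictly positive, so $(1+h^2)f$ carries the sign of $f$, and the strict \L ojasiewicz bound $|g^l|=|g|^l<(1+h^2)|f|$ shows that the perturbation $g^l$ cannot flip this sign; hence $\sign((1+h^2)f+g^l)=\sign(f)$ on $C\setminus\Z(f)$. Combined, these yield $\Lambda(<(1+h^2)f+g^l>)=\Lambda(<f>)$ on $C$.

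The principal obstacle is arranging the \L ojasiewicz inequality in exactly the polynomial form needed: the exponent $l$ must be odd, the multiplier must have the form $1+h^2$ (to ensure positivity), and the inequality must be strict off $\Z(f)$. Non-compactness of $C$ is the reason the multiplier is a polynomial rather than a constant, and the oddness of $l$ is what makes $g^l$ interact correctly with the sign of $g$. These refinements are obtained from the standard semi-algebraic \L ojasiewicz inequality by replacing the given exponent by a larger odd integer and, if necessary, rescaling $h$; the lemma is cited from \cite[Prop. 1.16, Chap. 2]{ABR} precisely because these adjustments are packaged there in a ready-to-use form.
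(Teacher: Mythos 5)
Your argument is correct: the paper itself gives no proof of this lemma, citing it directly from \cite[Prop.~1.16, Chap.~2]{ABR}, and your \L ojasiewicz-based reconstruction is essentially the standard proof of that cited result. The two adjustments you defer to the end are indeed the only delicate points and both are routine: the exponent can be raised to an odd $l$ by absorbing the polynomially bounded factor $|g|^{l-N}$ into the multiplier, and strictness off $\Z(f)$ (which \emph{is} genuinely needed, since with mere $\leq$ the sum $(1+h^2)f+g^l$ could vanish where $f\neq 0$) is obtained by enlarging the multiplier, e.g.\ replacing $1+h^2$ by $1+(1+h^2)^2$.
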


The following theorem is the main result of the section. It implies
Theorem C of the introduction.
\begin{thm}
\label{principal1}
Let $f\in\SR^0(X)$ and $S=\S(f)$. There exists $r\in\Pol(X)$ such that 
$$ \S(r)\subset S\,\,{\rm and}\,\,S\setminus \S(r)\subset
\overline{\Bd(S)}^{Zar}\cap \pol(f).$$
More precisely, if $((f_0,X_0,p_0,q_0),\ldots,(f_k,X_k,p_k,q_k))$ is
a polar sequence associated to $f$ then, for $i=0,\ldots,k$, there
exists $r_i\in\Pol(X)$ such that 
$$ \S(r_i)\cap X_i\subset S\cap X_i\,\,{\rm
  and}\,\,(S\setminus \S(r_i))\cap X_i\subset
\overline{\Bd(S)}^{Zar}\cap X_{i+1}.$$
\end{thm}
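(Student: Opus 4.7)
\emph{Plan.} I would prove the more precise (indexed) statement by downward induction on $i$ from $i=k$ to $i=0$; the first statement follows by taking $i=0$ and $r=r_0$ (since $X_0=X$ and $X_1=\pol(f)$). For the base case $i=k$, the condition $X_{k+1}=\emptyset$ forces $f_k$ to be regular on $X_k$, so $q_k$ is non-vanishing there and $\sign(f_k)=\sign(p_kq_k)$ on $X_k$; taking $r_k=p_kq_k$ gives $\S(r_k)\cap X_k = S\cap X_k$, and the inclusion $(S\setminus\S(r_k))\cap X_k\subset\overline{\Bd(S)}^{Zar}\cap X_{k+1}=\emptyset$ is vacuous.

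\medskip

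\noindent For the inductive step, suppose $r_{i+1}\in\Pol(X)$ has been constructed with the required properties on $X_{i+1}$. The strategy is to glue the two pieces of sign data available on $X_i$: on $X_i\setminus X_{i+1}$ we have $f=p_i/q_i$, so $\sign(f)=\sign(p_iq_i)$; on $X_{i+1}$, the inductive hypothesis says $\sign(r_{i+1})$ matches $\sign(f)$ off the exception $\overline{\Bd(S)}^{Zar}\cap X_{i+2}$. I would apply Lemma \ref{hl1} with closed semi-algebraic set $X_i\subset X$, $f=p_iq_i$ and $g=r_{i+1}$, obtaining $P,Q\in\Pol(X)$ with $P>0$ and $Q\geq 0$ on $X$, $\sign(Pp_iq_i+Qr_{i+1})=\sign(p_iq_i)$ on $X_i$, and $\Z(Q)\supset X_{i+1}$. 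The natural candidate is
\[
r_i \;=\; P\,p_iq_i + (Q+1)\,r_{i+1}.
\]
On $X_{i+1}$ both $p_iq_i$ and $Q$ vanish, so $r_i=r_{i+1}$; the inductive hypothesis then yields $\S(r_i)\cap X_{i+1}\subset S\cap X_{i+1}$ and $(S\setminus\S(r_i))\cap X_{i+1}\subset\overline{\Bd(S)}^{Zar}\cap X_{i+2}\subset\overline{\Bd(S)}^{Zar}\cap X_{i+1}$, which handles the $X_{i+1}$ part of the conclusion.

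\medskip

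\noindent The main obstacle is the analysis on $X_i\setminus X_{i+1}$, where no exception is permitted: one needs $\S(r_i)\cap(X_i\setminus X_{i+1})=\S(p_iq_i)\cap(X_i\setminus X_{i+1})$ strictly. The added $r_{i+1}$ can, a priori, flip signs near $X_{i+1}$ (where the sign-controlled polynomial $Pp_iq_i+Qr_{i+1}$ is small) and, worse, create false positives at points of $\Z(p_i)\cap(X_i\setminus X_{i+1})$ where $f$ vanishes but $r_{i+1}$ does not. To remedy this, I would strengthen the construction using Lemma \ref{hl2}: its \L ojasiewicz-type content is exactly what is needed to force the sign-carrier $Pp_iq_i+Qr_{i+1}$ to dominate $|r_{i+1}|$ in absolute value on $X_i\setminus X_{i+1}$, and to enforce the right vanishing along $\Z(p_i)\cap(X_i\setminus X_{i+1})$. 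Iteratively combining Lemmas \ref{hl1} and \ref{hl2}, in the spirit of the Positivstellensatz-style manipulations used elsewhere in the paper, then yields a polynomial $r_i$ with the correct sign behavior on $X_i\setminus X_{i+1}$; this closes the induction and establishes both inclusions of the theorem.
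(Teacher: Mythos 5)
Your overall architecture---downward induction on $i$, base case $r_k=p_kq_k$, gluing the sign data of $p_iq_i$ on $X_i\setminus X_{i+1}$ with the inductively constructed $r_{i+1}$ on $X_{i+1}$---matches the paper's proof, and you correctly diagnose the obstruction: the naive candidate $Pp_iq_i+(Q+1)r_{i+1}$ loses sign control near $X_{i+1}$ and can create false positives on $\Z(p_i)\cap(X_i\setminus X_{i+1})$. But the proposal stops exactly where the real work begins. ``Iteratively combining Lemmas \ref{hl1} and \ref{hl2}'' is a statement of intent, not a construction; the actual proof consists of two specific, non-obvious applications of Lemma \ref{hl2} to two carefully chosen closed semi-algebraic sets, neither of which appears in your sketch. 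First one must \emph{pre-correct} $r_{i+1}$: setting $F=\overline{\S(r_{i+1})\cap X_i}^{eucl}\cap(X_i\setminus S_i)$, one checks $X_{i+1}\cap F\subset\Z(r_{i+1})$ and applies Lemma \ref{hl2} with $-q_i^2$ to obtain $r'_{i+1}=(1+h'^2)(-q_i^2)+r_{i+1}^{l'}$, which still satisfies the inductive hypotheses on $X_{i+1}$ but in addition has \emph{no} false positives anywhere on $X_i$, i.e.\ $\S(r'_{i+1})\cap X_i\subset S_i$. Only then is the gluing performed, via $r_i=(1+h^2)p_iq_i+t^{2l}r_{i+1}'^{\,l}$ where $t$ satisfies $\Z(t)=\overline{\Z(p_i)\cap C}^{Zar}$ with $C=\overline{S_i}^{eucl}\setminus(\S(r'_{i+1})\cap X_i)$; the factor $t^{2l}$ is precisely what makes the hypothesis $\Z(p_iq_i)\cap C\subset\Z(t^2r'_{i+1})$ of Lemma \ref{hl2} hold.

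More seriously, your proposal never addresses why the exceptional set on $X_{i+1}$ lands inside $\overline{\Bd(S)}^{Zar}$. You verify this only for the naive candidate, using that it restricts to $r_{i+1}$ on $X_{i+1}$; but any corrected candidate no longer does---in the paper's construction $r_i$ restricts to $t^{2l}r_{i+1}'^{\,l}$ on $X_{i+1}$ and thereby acquires the extra zero set $\Z(t)\cap X_{i+1}$. Proving that this extra set is harmless, i.e.\ that $\Z(t)\subset\overline{\Bd(S)}^{Zar}$ (which uses $(\Z(p_i)\cap C)\setminus X_{i+1}\subset\Bd(S_i)$ together with the inductive description of $C\cap X_{i+1}$), is an essential and delicate step of the argument with no counterpart in your sketch. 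As written, the proposal is an accurate diagnosis of the difficulty rather than a proof of the theorem.
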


\begin{proof} We set $S_i=S\cap X_i$ for $i=0,\ldots,k$.
We proceed by decreasing induction on $i=k,\ldots,0$.\\
$\bullet$ For $i=k$ there is nothing to do since $f_k$ is regular on $X_k$.\\
$\bullet$ Assume $i\in\{0,\ldots,k-1\}$ and there exists $r_{i+1}\in\Pol(X)$ such that 
$$ \S(r_{i+1})\cap X_{i+1}\subset S\cap X_{i+1}\,\,{\rm
  and}\,\,(S\setminus \S(r_{i+1}))\cap X_{i+1}\subset
\overline{\Bd(S)}^{Zar}\cap X_{i+2}.$$
Let $F$ denote the closed semi-algebraic subset of $X_i$ defined by 
$F=\overline{\S(r_{i+1})\cap X_i}^{eucl}\cap (X_i\setminus S_i).$

We have
\begin{equation}
\label{equ1}
X_{i+1}\cap F\subset \Z(r_{i+1})\cap X_i.
\end{equation}
If $x\in X_{i+1}\cap F$ then $x\in X_{i+1}$ and $x\not\in S_i\cap
X_{i+1}=S_{i+1}$. By induction hypothesis we have $\S(r_{i+1})\cap
X_{i+1}\subset S_{i+1}$ and thus $r_{i+1}(x)\leq 0$. Since $x\in
\overline{\S(r_{i+1})\cap X_i}^{eucl}$ then $x\in
\overline{\S(r_{i+1})\cap X_i}^{eucl}\setminus (\S(r_{i+1})\cap
X_{i})=\Bd(\S(r_{i+1})\cap
X_{i})$ i.e $r_{i+1}(x)=0$ and it proves (\ref{equ1}).\\
\\
By (\ref{equ1}) and since $X_{i+1}=\Z(-q_i^2)\cap X_i$ then Lemma \ref{hl2} provides us
$h'\in\Pol(X)$, $l'$ an odd positive integer such that
$r'_{i+1}=(1+h'^2)(-q_i^2)+r_{i+1}^{l'}$ verifies 
$\Lambda(<r'_{i+1}>)=\Lambda(<-q_i^2>)$ on $F$. Since
$\Lambda(<r'_{i+1}>)=\Lambda(<r_{i+1}>)$ on $X_{i+1}$ then $r'_{i+1}$
satisfies the same induction hypotheses than $r_{i+1}$ namely
\begin{equation}
\label{equ2}
 \S(r'_{i+1})\cap X_{i+1}\subset S_{i+1}
\end{equation}
and
\begin{equation}
\label{equ3}
(S_{i+1}\setminus \S(r'_{i+1}))\subset
\overline{\Bd(S)}^{Zar}\cap X_{i+2}.
\end{equation}
We claim that $r'_{i+1}$ satisfies the third
property 
\begin{equation}
\label{equ4}
\S(r'_{i+1})\cap X_i\subset S_i.
\end{equation}
If $x\in \S(r'_{i+1})\cap X_i$ then $r_{i+1}(x)$ must be $>0$ and if
$x\not\in S_i$ then $x\in F$ and the sign of $r'_{i+1}(x)$ is the sign
of $-q_i^2(x)$, which is impossible. We have proved (\ref{equ4}).

Set $C=\overline{S_i}^{eucl}\setminus (\S(r'_{i+1})\cap X_i).$
Let $t\in\Pol(X)$ such that $\Z(t)=\overline{\Z(p_i)\cap
  C}^{Zar}$. Since $f_i\in \SR^0(X_i)$ then $\Z(q_i)\cap
X_i\subset\Z(p_i)\cap X_i$ \cite[Prop. 3.5]{FHMM} and thus we get 
$\Z(p_iq_i)\cap C\subset \Z(t)\subset \Z(t^2r'_{i+1}).$
By Lemma \ref{hl2}, there exist $h\in\Pol(X)$ and $l$ an odd positive
integer such that $r_i=(1+h^2)p_iq_i+t^{2l}r_{i+1}'^l$ verifies 
$\Lambda(<r_i>)=\Lambda(<p_iq_i>)$ on $C$. We prove now that $r_i$
is the function we are looking for.

Assume $x\in X_i\setminus S_i$. If $x\in X_{i+1}$ then
$p_i(x)q_i(x)=0$, else $x\in X_i\setminus (S_i\cup X_{i+1})$ and the
sign of $p_i(x)q_i(x)$ is the sign of $f_i(x)$; thus $p_i(x)q_i(x)\leq
0$. By (\ref{equ4}) we get
$r'_{i+1}(x)\leq 0$ and thus $r_i(x)\leq 0$. We have proved that 
\begin{equation}
\label{equ5}
\S(r_i)\cap X_i\subset S_i.
\end{equation}

It remains to prove 
\begin{equation}
\label{equ6}
S_i\setminus (\S(r_i)\cap X_i)\subset \overline{\Bd(S)}^{Zar}\cap
X_{i+1}.
\end{equation}
Assume $x\in S_i\setminus X_{i+1}$. We have
$f_i(x)=\dfrac{p_i(x)}{q_i(x)}$ and thus $p_i(x)q_i(x)>0$. If
$r'_{i+1}(x)\geq 0$ then $r_i(x)>0$. If $r'_{i+1}(x)< 0$ then $x\in C$
and we know that the sign of $r_i(x)$ is the sign of $p_i(x)q_i(x)$.
We have proved that $S_i\setminus X_{i+1}\subset \S(r_i)\cap
(X_i\setminus X_{i+1})$
and by (\ref{equ5}) then 
$S_i\setminus (\S(r_i)\cap X_i)\subset X_{i+1}.$ So in order to get
(\ref{equ6}) we are left to prove 
\begin{equation}
\label{equ7}
S_{i+1}\setminus (\S(r_i)\cap X_{i+1})\subset  \overline{\Bd(S)}^{Zar}.
\end{equation}

We have $(\Z(p_i)\cap C)\setminus X_{i+1}\subset \Bd(S_i)$ since
$C\subset \overline{S_i}^{eucl}$ and $S_i\setminus
X_{i+1}=(\S(p_iq_i)\cap X_i)\setminus X_{i+1}$. By (\ref{equ3}),
(\ref{equ4}) and
since $\Z(q_i)\cap X_i=X_{i+1}\subseteq
\Z(p_i)\cap X_i$ we get $\Z(p_i)\cap C\cap X_{i+1}=C\cap X_{i+1}=
((\overline{S_i}^{eucl}\setminus S_i)\cup (S_i\setminus
\S(r'_{i+1})))\cap X_{i+1}\subset (\Bd(S_i)\cap X_{i+1})\cup
(\overline{\Bd(S)}^{Zar}\cap X_{i+2})\subset
\overline{\Bd(S)}^{Zar}.$
From the above it follows that 
\begin{equation}
\label{equ8}
\Z(t)\subset
\overline{\Bd(S)}^{Zar}.
\end{equation}
Since $r_i=t^{2l}r_{i+1}'^l$ on $X_{i+1}$ then $S_{i+1}\setminus (\S(r_i)\cap
X_{i+1})=(S_{i+1}\setminus \S(r_{i+1}'))\cup (\Z(t)\cap S\cap
X_{i+1})$. Combining (\ref{equ3}) and (\ref{equ8}) we get
(\ref{equ7}), and the proof is complete.


\end{proof}


Remark that Theorem \ref{principal1} implies the first part of Proposition \ref{sanspoles}.
Let us mention consequences of Theorem \ref{principal1}. The following
result corresponds to Theorem C of the introduction.

\begin{thm}
\label{principal2}
Let $f\in\SR^0(X)$ and $S=\S(f)$. Then $S$ is a principal open
semi-algebraic set if and only if $S\cap
\overline{\Bd(S)}^{Zar}=\emptyset$.
\end{thm}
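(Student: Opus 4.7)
The plan is to prove this as a short corollary of Theorem \ref{principal1}, handling the two implications separately.

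For the easy direction, I would assume $S$ is principal, say $S = \S(p)$ for some $p \in \Pol(X)$. Then $\Bd(S) = \overline{S}^{eucl} \setminus \mathring{S} \subset \Z(p)$, because the euclidean boundary of $\S(p)$ consists of points where $p$ vanishes (continuity of $p$). Since $\Z(p)$ is Zariski closed, $\overline{\Bd(S)}^{Zar} \subset \Z(p)$, and consequently $S \cap \overline{\Bd(S)}^{Zar} \subset \S(p) \cap \Z(p) = \emptyset$.

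For the nontrivial direction, I would apply Theorem \ref{principal1} directly: it produces an $r \in \Pol(X)$ such that
\[
\S(r) \subset S \quad \text{and} \quad S \setminus \S(r) \subset \overline{\Bd(S)}^{Zar} \cap \pol(f).
\]
Under the hypothesis $S \cap \overline{\Bd(S)}^{Zar} = \emptyset$, the right-hand inclusion forces $S \setminus \S(r) = \emptyset$, hence $S \subset \S(r)$. Combined with the first inclusion this yields $S = \S(r)$, so $S$ is principal.

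There is essentially no obstacle: all the real work has been done in Theorem \ref{principal1}, and the role of the topological condition $S \cap \overline{\Bd(S)}^{Zar} = \emptyset$ is precisely to convert the defect term $\overline{\Bd(S)}^{Zar} \cap \pol(f)$ (which lies in the boundary closure) into the empty set. The only small care needed is to note that this reasoning is symmetric in the sense that we do not need to separately control $\pol(f)$; intersecting with $S$ already kills the entire right-hand side.
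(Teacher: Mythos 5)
Your proof is correct and is exactly the argument the paper intends: Theorem \ref{principal2} is stated as an immediate consequence of Theorem \ref{principal1}, with the defect term $S\setminus\S(r)\subset S\cap\overline{\Bd(S)}^{Zar}$ vanishing under the hypothesis, and the easy direction following from $\Bd(\S(p))\subset\Z(p)$. No discrepancy with the paper's approach.
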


\begin{thm}
\label{principal2closed}
Let $f\in\SR^0(X)$. Then $\bar{\S}(f)$ is a principal closed
semi-algebraic set if and only if $\S(-f)\cap
\overline{\Bd(\S(-f))}^{Zar}=\emptyset$.
\end{thm}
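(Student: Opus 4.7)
The plan is to deduce Theorem \ref{principal2closed} directly from Theorem \ref{principal2} by passing to complements. Since $\SR^0(X)$ is a ring, the function $-f$ lies in $\SR^0(X)$, so Theorem \ref{principal2} applies: the regulous principal open semi-algebraic set $\S(-f)$ is polynomial principal open if and only if
\[
\S(-f)\cap \overline{\Bd(\S(-f))}^{Zar}=\emptyset.
\]

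Next I would translate the principal closedness of $\bar{\S}(f)$ into the principal openness of $\S(-f)$. For every $p\in\Pol(X)$ we have $\bar{\S}(p)=X\setminus \S(-p)$ and similarly $\bar{\S}(f)=X\setminus \S(-f)$, since $\bar{\S}(f)=\{x\in X\mid f(x)\ge 0\}$ and $\S(-f)=\{x\in X\mid f(x)<0\}$ form a partition of $X$. Hence an equality $\bar{\S}(f)=\bar{\S}(p)$ is equivalent to $\S(-f)=\S(-p)$. In other words, $\bar{\S}(f)$ is a principal closed semi-algebraic subset of $X$ if and only if $\S(-f)$ is a principal open semi-algebraic subset of $X$.

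Combining the two equivalences immediately gives the stated criterion. There is no serious obstacle: the entire content of the statement is packed into Theorem \ref{principal2}, and Theorem \ref{principal2closed} is its formal ``closed'' counterpart obtained by replacing $f$ with $-f$ and taking complements in $X$.
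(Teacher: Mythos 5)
Your proof is correct and follows exactly the paper's own argument: the paper also observes that a closed semi-algebraic set is principal closed if and only if its complement is principal open, and then applies Theorem \ref{principal2} to $-f$. Nothing to add.
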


\begin{proof} 
It is easily seen that an open (resp. closed) semi-algebraic subset
$S$ of $X$ is principal open (resp. closed) if and only if $X\setminus
S$ is principal closed (resp. open). According to the above remark,
the proof follows from Theorem \ref{principal2}.
\end{proof}

\begin{cor}
\label{principal3}
Let $f\in\SR^0(X)$ such that $\Z(f)$ is Zariski closed. Then the sets
$\S(f)$, $\S(-f)$, $\bar{\S}(f)$ and $\bar{\S}(-f)$ are principal
semi-algebraic sets.
\end{cor}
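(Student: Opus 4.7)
The plan is to reduce everything to Theorems \ref{principal2} and \ref{principal2closed} by exploiting continuity of $f$ together with the hypothesis that $\Z(f)$ is Zariski closed.

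The key observation is the following: since $f$ is a continuous (in particular regulous) function on $X$, the euclidean boundary of the open set $\S(f)=\{f>0\}$ is contained in $\Z(f)$. Indeed, any point $x\in \Bd(\S(f))=\overline{\S(f)}^{eucl}\setminus \S(f)$ satisfies $f(x)\geq 0$ by approximation from inside $\S(f)$ and $f(x)\leq 0$ because $x\notin \S(f)$ and $\S(f)$ is open; continuity forces $f(x)=0$. The same argument applied to $-f$ gives $\Bd(\S(-f))\subset \Z(-f)=\Z(f)$.

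Now, under the hypothesis that $\Z(f)$ is Zariski closed, we deduce
\[
\overline{\Bd(\S(f))}^{Zar}\subset \Z(f)\quad\text{and}\quad \overline{\Bd(\S(-f))}^{Zar}\subset \Z(f).
\]
Since $\S(f)\cap \Z(f)=\emptyset$ and $\S(-f)\cap \Z(f)=\emptyset$, it follows that both $\S(f)\cap\overline{\Bd(\S(f))}^{Zar}$ and $\S(-f)\cap\overline{\Bd(\S(-f))}^{Zar}$ are empty.

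Applying Theorem \ref{principal2} to $f$ and to $-f$ yields that $\S(f)$ and $\S(-f)$ are principal open semi-algebraic sets. Applying Theorem \ref{principal2closed} to $-f$ (giving $\bar{\S}(-f)$ principal closed from the vanishing of $\S(f)\cap\overline{\Bd(\S(f))}^{Zar}$) and to $f$ (giving $\bar{\S}(f)$ principal closed from the vanishing of $\S(-f)\cap\overline{\Bd(\S(-f))}^{Zar}$) concludes the proof. No step is really an obstacle here: the whole argument is essentially the topological remark that a continuous function has its sign-change set inside its zero set, combined with the two characterizations already established.
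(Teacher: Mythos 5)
Your argument is correct and coincides with the paper's own proof: both rest on the observation that $\Bd(\S(\pm f))\subset\Z(f)$ by continuity, so that Zariski closedness of $\Z(f)$ gives $\S(\pm f)\cap\overline{\Bd(\S(\pm f))}^{Zar}=\emptyset$, after which Theorems \ref{principal2} and \ref{principal2closed} apply. Nothing is missing.
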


\begin{proof}
Assume $\Z(f)$ is Zariski closed. Since $\Bd(\S(f))\subset \Z(f)$, we
get $\S(f)\cap
\overline{\Bd(\S(f))}^{Zar}\subset \S(f)\cap\overline{\Z(f)}^{Zar} =\S(f)\cap
\Z(f)=\emptyset$. From the same arguments, we get $\S(-f)\cap
\overline{\Bd(\S(-f))}^{Zar}=\emptyset$. By Theorems \ref{principal2} and
\ref{principal2closed} the proof is complete.
\end{proof}

\subsection{Characterization of regulous principal semi-algebraic sets}

Let $X\subset \R^n$ be a real algebraic set of dimension $d$.

In this section, we will give an answer to the following question:
Under which conditions an open semi-algebraic set is regulous
principal?

\begin{defn}
A semi-algebraic subset $S$ of $X$ is said to be generically
principal on $X$ if $S$ coincides with a principal open
semi-algebraic subset of $X$ outside a real algebraic subset of $X$ of
positive codimension.
\end{defn}

The next result is a regulous version of Lemma \ref{hl2}.
\begin{lem}
\label{hl3}
Let $C$ be a closed semi-algebraic subset of $X$ and let
$f,g\in\SR^0 (X)$ such that $\Z(f)\cap C\subset \Z(g)$. There exist
$h\in\Pol(X)$ and $l\in\N$ odd such that $h>0$ on $X$ and 
$$\Lambda(<hf+g^l>)=\Lambda(<f>)$$ on $C$.
\end{lem}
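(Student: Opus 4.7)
The plan is to reduce the statement to a \L ojasiewicz-type estimate of the form $|g|^l\le h|f|$ on $C$, mirroring the polynomial proof of Lemma \ref{hl2}, and then to conclude by a direct sign analysis.

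First I would invoke the semi-algebraic \L ojasiewicz inequality for the continuous semi-algebraic functions $f$ and $g$ (semi-algebraic by Proposition \ref{s-a1}) on the closed semi-algebraic set $C$: since $\Z(f)\cap C\subset \Z(g)$, this yields an integer $N\ge 1$, a constant $c>0$ and an integer $M\ge 0$ such that
$$|g(x)|^N\le c(1+\|x\|^2)^M|f(x)|\qquad\text{for every } x\in C.$$
Such an estimate follows from the compact version \cite[Cor.~2.6.7]{BCR} applied after a stereographic compactification of $\R^n$ (equivalently, after rescaling $f$ and $g$ by a common power of $1+\|x\|^2$); the polynomial weight absorbs the possibly non-compact behavior at infinity. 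Letting $l$ be the smallest odd integer with $l\ge N$ and using the polynomial growth $|g(x)|\le c'(1+\|x\|^2)^{M'}$ valid for the continuous semi-algebraic function $g$ on $X$, I then obtain
$$|g|^l=|g|^{l-N}\cdot|g|^N\le c\,{c'}^{l-N}(1+\|x\|^2)^{M+(l-N)M'}|f|\qquad\text{on } C.$$

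Next I would set $h_0=c\,{c'}^{l-N}(1+\|x\|^2)^{M+(l-N)M'}\in\Pol(X)$, which is strictly positive on $\R^n$, and $h=h_0+1\in\Pol(X)$, still strictly positive on $X$ and now satisfying the \emph{strict} inequality
$$h(x)|f(x)|=h_0(x)|f(x)|+|f(x)|>|g(x)|^l\qquad\text{for every } x\in C\setminus\Z(f).$$
The sign equality then follows by a routine case distinction. If $x\in C\cap\Z(f)$ then $g(x)=0$ by hypothesis and $h(x)f(x)+g(x)^l=0$, so both signs vanish. If $x\in C\setminus\Z(f)$, the strict inequality $|hf|>|g^l|$ together with $h>0$ forces $\sign(hf+g^l)=\sign(hf)=\sign(f)$. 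Hence $\Lambda(<hf+g^l>)=\Lambda(<f>)$ on $C$.

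The principal obstacle is the first step: the compact \L ojasiewicz inequality is classical, but extracting a \emph{polynomial} weight on the non-compact closed semi-algebraic set $C$ requires a ``\L ojasiewicz at infinity'' argument that has to be handled carefully. Once the estimate $|g|^l\le h_0|f|$ is in hand, the remainder of the proof is routine and closely parallels the proof of Lemma \ref{hl2}.
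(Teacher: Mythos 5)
Your proof is correct and follows essentially the same route as the paper: a \L ojasiewicz-type estimate $|g|^{l}\leq h_0|f|$ on $C$ with a positive definite polynomial weight, upgraded to a strict inequality off $\Z(f)$, followed by the routine sign analysis. The ``\L ojasiewicz at infinity'' step you single out as the main obstacle is exactly what the paper obtains directly by quoting [BCR, Thm.~2.6.6] (continuity of $g^{l}/f$ on the closed, not necessarily compact, semi-algebraic set $C$) together with [BCR, Thm.~2.6.2] (a continuous semi-algebraic function on a closed semi-algebraic set is bounded by a positive definite polynomial), so your compactification detour is a valid but unnecessary re-derivation of a cited result.
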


\begin{proof}
We can see $C$ as a closed semi-algebraic subset of $\R^n$ and $f,g\in
\SR^0(\R^n)$ by definition of regulous functions on $X$. By
\cite[Thm. 2.6.6]{BCR}, for a sufficiently big positive odd integer
$l$ the function $\dfrac{g^l}{f}$ is semi-algebraic and continuous on
$C$. By \cite[Thm. 2.6.2]{BCR}, $|\dfrac{g^l}{f}|$ is bounded on $C$
by a
polynomial function $h$ which is positive definite on $\R^n$. The proof is done
by restricting these functions to $X$.
\end{proof}

\begin{prop}
\label{recolle}
Let $S$ be a semi-algebraic subset of $X$. The set $S$ is regulous principal open if and only if we
have:\\
1) $S\cap\overline{\Bd(S)}^{\Co}=\emptyset$,\\
and there exists an algebraic subset $W$ of $X$ of positive codimension
such that:\\
2) there exists $p\in\Pol(X)$ such that $S\setminus W=\S(p)\setminus
W$,\\
3) there exists $g\in\SR^0(X)$ such that $S\cap W=\S(g)\cap W$.
\end{prop}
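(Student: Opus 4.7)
The plan is to prove the equivalence by handling necessity and sufficiency separately, with the necessity direction being essentially formal and the sufficiency direction requiring a gluing construction based on Lemma~\ref{hl3}.

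For necessity, suppose $S = \S(f)$ with $f \in \SR^0(X)$. I would set $W := \pol(f)$, an algebraic subset of $X$ of positive codimension. Condition~(1) will follow from $\Bd(S) \subset \Z(f)$ together with the fact that $\Z(f)$ is regulous closed, so $\overline{\Bd(S)}^{\Co} \subset \Z(f)$ is disjoint from $S$. Condition~(2) will be obtained by writing $f = a/b$ on $\dom(f) = X \setminus W$ with $a, b \in \Pol(X)$ and $\Z(b) = W$, whence $p := ab$ satisfies $S \setminus W = \S(p) \setminus W$. Condition~(3) is immediate upon taking $g := f$ itself.

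For sufficiency, assume (1), (2), (3). First I would choose $\phi \in \SR^0(X)$ with $\Z(\phi) = \overline{\Bd(S)}^{\Co}$; replacing $\phi$ by $\phi^2$ if necessary we may assume $\phi \geq 0$ on $X$, so by~(1) we have $\phi > 0$ on $S$. Let $q \in \Pol(X)$ be non-negative with $\Z(q) = W$, and set $\tilde g := g\,\phi^{2m}$ for a suitably large integer $m \geq 1$, so that $\Z(\tilde g) = \Z(g) \cup \overline{\Bd(S)}^{\Co}$ while $\sign(\tilde g)$ agrees with $\sign(g)$ off $\overline{\Bd(S)}^{\Co}$. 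The desired regulous function will be constructed in the shape
$$
f := h\cdot p\cdot q^{2k+1} + \tilde g^{\,l},
$$
where the strictly positive polynomial $h$ and the odd integer $l$ will be supplied by Lemma~\ref{hl3} applied on an appropriate closed semi-algebraic set $C$. On $W$ the first summand vanishes (since $q \equiv 0$), leaving $f|_W = \tilde g^{\,l}|_W$; combining~(1) and~(3) then yields $\S(f) \cap W = S \cap W$. Off $W$ the conclusion of Lemma~\ref{hl3} gives $\sign(f) = \sign(p q^{2k+1}) = \sign(p)$ on $C$, and with~(2) this recovers $S \setminus W$.

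The hard part will be verifying the hypothesis $\Z(p q^{2k+1}) \cap C \subset \Z(\tilde g)$ needed to invoke Lemma~\ref{hl3}, together with a correct choice of $C$. The role of $\phi$ is precisely to absorb the boundary of $S$: on $\Bd(S) \cap (X\setminus W)$ both $p$ and $\tilde g$ vanish, so these points are harmless. The delicate points are those in $\Z(p) \cap (X\setminus W)$ interior to $X \setminus S$ (where $\phi$ need not vanish) and, dually, points of $W$ off $S$ where $\tilde g$ need not vanish. I expect to handle these by an iterated application of Lemma~\ref{hl3} on a decreasing sequence of closed sets, mirroring the strategy used in the proof of Theorem~\ref{principal1}, with condition~(1) providing the essential topological input that keeps the required zero-set inclusion valid at each stage.
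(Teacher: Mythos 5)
Your necessity direction is exactly the paper's: take $W=\pol(f)$, write $f=a/b$ on $\dom(f)$ with $\Z(b)=W$, set $p=ab$ and $g=f$, and use $\Bd(S)\subset\Z(f)$ for condition 1). The gap is in the sufficiency direction, and it sits precisely at the point you flag as ``delicate'' and then defer. Conditions 1)--3) say nothing about the sign of $g$ \emph{outside} $W$: there may be a point $x\notin S\cup W\cup\overline{\Bd(S)}^{\Co}$ with $g(x)>0$ and $p(x)=0$ (nothing prevents $\Z(p)$ from meeting $\S(g)$ away from $\overline{S}^{eucl}$). At such a point $\phi(x)>0$, hence $\tilde g(x)>0$, the first summand of $f=hpq^{2k+1}+\tilde g^{\,l}$ vanishes, and $f(x)>0$ although $x\notin S$; moreover $x$ lies in no reasonable closed set $C\subset\overline{S}^{eucl}$, so Lemma \ref{hl3} gives you no control there. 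Multiplying $g$ by $\phi^{2m}$ only kills it on $\overline{\Bd(S)}^{\Co}$, which does not contain the problematic locus $\S(g)\setminus(S\cup\overline{\Bd(S)}^{\Co})$. So as written, $\S(f)\subset S$ fails.

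The missing idea is a \emph{preliminary} application of Lemma \ref{hl3}, not to the pair $(p,\tilde g)$ but to the pair $(-q^2,g)$ on the closed set $F=\overline{\S(g)}^{eucl}\setminus S$ (after first arranging $W\subset\Z(p)$ by replacing $p$ with $q^2p$). Condition 3) gives $W\cap F\subset\Z(g)$, so the lemma produces $g'=-hq^2+g^{\,l}$ whose sign on $F$ is that of $-q^2$; this $g'$ still satisfies $S\cap W=\S(g')\cap W$ but now also $\S(g')\subset S$ globally, which is exactly what eliminates the bad points above (off $S$ one then has $p\leq 0$ and $g'\leq 0$ simultaneously). Only then does one set $C=\overline{S}^{eucl}\setminus\S(g')$, take a regulous $t$ with $\Z(t)=\overline{\Z(p)\cap C}^{\Co}$ (your $\phi$ would be an acceptable substitute at this stage, since $\Z(p)\cap C\subset\Bd(S)$), and apply Lemma \ref{hl3} a second time to get $f=p'p+t^{2l'}g'^{\,l'}$; condition 1) enters at the very end to show $S\setminus\S(f)\subset\overline{\Bd(S)}^{\Co}\cap W=\emptyset$. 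Your instinct to ``mirror Theorem \ref{principal1}'' is right, but the needed iteration is not over a decreasing sequence of closed sets: it is this one specific correction of $g$ on $F=\overline{\S(g)}^{eucl}\setminus S$, which your construction omits and which the $\phi$-trick cannot replace.
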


\begin{proof}
Assume $S=\S(f)$ with $f\in \SR^0(X)$ such that $f=\dfrac{p}{q}$ on
$\dom(f)$ with $p,q\in\Pol(X)$ and $\Z(q)=\pol(f)$. 
We have $S\cap\overline{\Bd(S)}^{\Co}=\emptyset$ since
$\overline{\Bd(S)}^{\Co}\subset \Z(f)$. Moreover $S\setminus
\pol(f)=\S(pq)$ and $f|_{\pol(f)}\in\SR^0(\pol(f))$. We have proved
one implication.

Assume now $S$ satisfies the the three conditions of the Proposition.
We may assume $W\subset \Z(p)$ changing $p$ by $q^2p$ where $q\in
\Pol(X)$ satisfies $W=\Z(q)$.

Set $F=\overline{\S(g)}^{eucl}\setminus S$. Assume $x\in W\cap F$ then
$x\in W\setminus (S\cap  W)$ and thus $g(x)\leq 0$. Then $x\in
\Bd(\S(g))\subset \Z(g)$. We have proved that $\Z(-q^2)\cap F\subset
\Z(g).$ By Lemma \ref{hl3} there exist
$h\in\Pol(X)$, $l\in\N$ odd and $g'\in\SR^0(X)$ such that $h>0$ on
$X$, $g'=-hq^2+g^l$ and 
$\Lambda(<g'>)=\Lambda(<-q^2>)$ on $F$. Clearly, the function $g'$ satisfies 
again the property 3) of the proposition, namely 
\begin{equation}
\label{equ9}
S\cap W=\S(g')\cap W.
\end{equation}
The function $g'$ satisfies in addition the following property 
\begin{equation}
\label{equ10}
\S(g')\subset S.
\end{equation}
Assume $g'(x)>0$ then $g(x)>0$ and moreover if $x\not\in S$ then
$x\in F$ and we get a contradiction since then the sign of $g'(x)$ would be
the sign of $-q^2(x)$. We have proved (\ref{equ10}).

Set $C=\overline{S}^{eucl}\setminus \S(g')$. Let $t\in\SR^0(X)$ be such
that $\Z(t)=\overline{\Z(p)\cap C}^{\Co}$. We clearly have $\Z(p)\cap
C\subset \Z(t^2 g').$ By Lemma \ref{hl3}, there exist $p'\in\Pol(X)$
positive definite on $X$ and a positive odd integer $l'$ such that
$f=p'p+t^{2l'}g'^{l'}$ is regulous on $X$ and satisfies $\Lambda(<f>)=\Lambda(<p>)$ on $C$.

Assume $x\not\in S$. We have $p(x)\leq 0$ since $W\subset
\Z(p)$. We have $g'(x)\leq 0$ by (\ref{equ10}). Hence $f(x)\leq 0$ and we have proved that 
\begin{equation}
\label{equ11} S(f)\subset S.
\end{equation}

Assume $x\in S\setminus W$. We have $p(x)> 0$. If $g'(x)\geq 0$ then clearly
$f(x)>0$. If $g'(x)< 0$
then $x\in C$ and $f(x)>0$ since $\Lambda(<f>)=\Lambda(<p>)$ on $C$.
We have proved that 
\begin{equation}
\label{equ12}
S\setminus W\subset \S(f)\setminus W.
\end{equation}

Since $W\subset \Z(p)$ and using (\ref{equ9}) it
follows that 
\begin{equation}
\label{equ13}
(S\cap W)\setminus (\S(f)\cap W)\subset
\Z(t)=\overline{\Z(p)\cap C}^{\Co}.
\end{equation}

We prove now that 
\begin{equation}
\label{equ14}
\Z(p)\cap C\subset \Bd(S).
\end{equation} 
Assume $y\in
\Z(p)\cap C\cap W=W\cap C$. We have $p(y)=0$, $y\in
\overline{S}^{eucl}\cap W$ and $g'(y)\leq 0$. We have $y\not\in S\cap
W$ by (\ref{equ9}). Hence $y\in
\Bd(S)\cap W$.\\
Assume $y\in \Z(p)\cap C$ and $y\not\in W$. Since
$p(y)=0$ and $y\not\in W$ then $y\not\in S$. We get $y\in
\overline{S}^{eucl}$ since $y\in C$ and it proves (\ref{equ14}).

From (\ref{equ11}), (\ref{equ12}), (\ref{equ13}) and (\ref{equ14}) it follows that $$S\setminus \S(f)\subset
\overline{\Z(p)\cap C}^{\Co}\cap W\subset \overline{\Bd(S)}^{\Co}\cap
W.$$
Since $S\cap\overline{\Bd(S)}^{\Co}=\emptyset$ we finally get
$$S=\S(f).$$
\end{proof}

\begin{thm}
\label{principal4}
Let $S$ be a
semi-algebraic subset of $X$. The set $S$ is regulous principal open if and only if we
have:\\
1) for any real algebraic subset $V$ of $X$ then $S\cap V$ is generically
principal,\\
and\\
2) $S\cap\overline{\Bd(S)}^{\Co}=\emptyset$.
\end{thm}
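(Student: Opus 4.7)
The plan is to treat both implications, with the \emph{only if} part following immediately from the definitions and the \emph{if} part proceeding by induction on $d=\dim X$, reducing at each step to Proposition \ref{recolle}. For \emph{only if}, assume $S=\S(f)$ with $f\in\SR^0(X)$: continuity forces $\Bd(S)\subset\Z(f)$, so by $\Co$-closedness of $\Z(f)$ we get $\overline{\Bd(S)}^{\Co}\subset\Z(f)$, disjoint from $S$; this gives (2). For (1), given a real algebraic $V\subset X$, the restriction $f|_V\in\SR^0(V)$ can be written as $p/q$ with $p,q\in\Pol(V)$ on the Zariski open complement of $\pol(f|_V)$, and on this complement $S\cap V=\S(pq)$, so $S\cap V$ is generically principal on $V$.

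For the \emph{if} direction, I proceed by induction on $d$, the case $d=0$ being immediate. Before the induction begins I observe that (2) forces $S$ to be Euclidean open in $X$: indeed, $S\setminus\Int(S)=S\cap\Bd(S)\subset S\cap\overline{\Bd(S)}^{\Co}=\emptyset$. For the inductive step, applying (1) with $V=X$ yields a real algebraic $W\subset X$ of positive codimension (so $\dim W<d$) together with $p\in\Pol(X)$ such that $S\setminus W=\S(p)\setminus W$. This is hypothesis (2) of Proposition \ref{recolle}, while our (2) is precisely its hypothesis (1). To satisfy hypothesis (3) of that proposition I need $g\in\SR^0(X)$ with $S\cap W=\S(g)\cap W$; since $\SR^0(X)\to\SR^0(W)$ is surjective by definition, this amounts to showing that $S\cap W$ is regulous principal open inside $W$, which will follow from the induction hypothesis provided the two conditions of the theorem hold for $S\cap W\subset W$. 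Condition (1) is automatic because every real algebraic subset of $W$ is also one of $X$. For condition (2), the openness of $S$ in $X$ makes $S\cap W$ open in $W$ with $\Bd_W(S\cap W)\subset\Bd(S)$, and since the $\Co$-topology on $W$ is the subspace topology induced from $X$ we deduce that the $\Co$-closure of $\Bd_W(S\cap W)$ in $W$ is contained in $\overline{\Bd(S)}^{\Co}\cap W$, which is disjoint from $S$ by hypothesis.

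The main technical hurdle is precisely this last comparison, in particular the identity $\overline{E}^{\Co,W}=\overline{E}^{\Co}\cap W$ for $E\subset W$, which follows from the characterisation of the $\Co$-topology through zero sets of algebraically constructible functions and their good behaviour under restriction to real algebraic subsets. Once this topological compatibility and the boundary inclusion $\Bd_W(S\cap W)\subset\Bd(S)$ are in place, the induction hypothesis provides $g\in\SR^0(W)$ with $S\cap W=\S(g)\cap W$, lifting to $\SR^0(X)$; Proposition \ref{recolle} then delivers an $f\in\SR^0(X)$ with $S=\S(f)$, completing the proof.
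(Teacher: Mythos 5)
Your proof is correct and follows essentially the same route as the paper: the paper's descent through a chain $X_0\supset X_1\supset\cdots\supset X_k$ of algebraic subsets of strictly decreasing dimension, followed by bottom-up applications of Proposition \ref{recolle}, is exactly your induction on $\dim X$ unrolled. Your explicit verification that conditions 1) and 2) pass to $S\cap W\subset W$ (openness of $S$, $\Bd_W(S\cap W)\subset\Bd(S)$, compatibility of the $\Co$-topology with restriction to an algebraic subset) spells out what the paper merely asserts in one line, and is a welcome addition.
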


\begin{proof}
If $S=\S(f)$ with $f\in\SR^0(X)$ then we have already seen that
$S\cap\overline{\Bd(S)}^{\Co}=\emptyset$. Moreover, $S\cap V$ is generically
principal for any real algebraic subset $V$ of $X$ since
$f|_V\in\SR^0(V)$ and thus $\S(f|_V)$ coincides with a principal open
semi-algebraic subset of $V$ on $V\setminus \pol(f|_V)$.

Assume now the set $S$ satisfies the conditions 1) and 2) of the
theorem. We denote the set $X$ by $X_0$ and $S$ by $S_0$. Since $S_0$
is generically principal there exist $p_0\in\Pol(X_0)$ and an
algebraic subset $X_1$ of $X_0$ of positive codimension such that
$S_0\setminus X_1=\S(p_0)\setminus X_1$. If $X_1=\emptyset$ then we are
done since $S$ is even principal. If $X_1\not=\emptyset$
then we denote by $S_1$ the set $S_0\cap X_1$. Remark that $S_1$
satisfies the conditions 1) and 2) of the theorem as an open
semi-algebraic subset of $X_1$ and we can repeat the process used for
$S_0$ but here for the set $S_1$. So we build a finite sequence 
$$((X_0,S_0,p_0),\ldots,(X_k,S_k,p_k))$$
such that for $i=0,\ldots,k-1$, $X_{i+1}$ is an algebraic subset of $X_{i}$ of positive
codimension, $S_i=S\cap X_i$ satisfies the conditions 1) and 2), $p_i\in\Pol(X)$, $S_{i}\setminus
X_{i+1}=(\S(p_{i})\cap X_{i})\setminus X_{i+1}$ and $S_k=S\cap X_k=\S(p_k)\cap
X_k$ with $p_k\in\Pol(X)$.
By Proposition \ref{recolle}, there exists $g_{k-1}\in\SR^0(X)$ such that
$S_{k-1}=\S(g_{k-1})\cap X_{k-1}$. By successive application of Proposition
\ref{recolle}, there exists $g_{i}\in\SR^0(X)$ such that
$S_{i}=\S(g_{i})\cap X_{i}$ for $i=k-2,\ldots,0$, which establishes in
particular that $S$ is regulous principal open.
\end{proof}

\section{Lengths of signs of regulous functions (part 2)}

\subsection{Upper bounds for the lengths of signs of regulous functions}

We can use Corollary \ref{principal3} to improve some of the results
of Section $4$ concerning the lengths of signs of regulous
functions.

We extend the result of Proposition \ref{signsadim1}  which concerns
curves, to any real algebraic set of any dimension. It corresponds to
Theorem B of the introduction.
\begin{thm}
\label{Z-closed2}
Let $0\not=f\in \SR^0(X)$. Then $\Z(f)$ is Zariski closed if and only if
$\length(f)=1$.
\end{thm}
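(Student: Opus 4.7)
The plan is to combine the two earlier key results, Theorem~\ref{signsa} (which characterizes continuous semi-algebraic functions of length $\leq 1$) and Corollary~\ref{principal3} (which, under the assumption that $\Z(f)$ is Zariski closed for a regulous $f$, guarantees that $\S(\pm f)$ and $\bar\S(\pm f)$ are principal). A regulous function is continuous and semi-algebraic by Proposition~\ref{s-a1}, so both results apply.

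First, I would handle the easy direction: if $\length(f)=1$, there exists $p\in\Pol(X)$ with $\Lambda(<f>)=\Lambda(<p>)$, hence $\sign(f)=\sign(p)$ pointwise; in particular $\Z(f)=\Z(p)$, which is Zariski closed.

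For the converse, assume $\Z(f)$ is Zariski closed and $f\not=0$. I would verify the three hypotheses of Theorem~\ref{signsa}. Condition (1), $\Z(f)$ Zariski closed, is our assumption. Conditions (2) and (3), the principality of $\S(f)$ and $\S(-f)$ (equivalently $\bar\S(f)$), are exactly provided by Corollary~\ref{principal3}. Theorem~\ref{signsa} then yields $p\in\Pol(X)$ with $\Lambda(<f>)=\Lambda(<p>)$, so $\length(f)\leq 1$. Since $f\not=0$ we cannot have $\length(f)=0$ (the zero form has sign identically $0$), so $\length(f)=1$.

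There is no real obstacle here: all the work has been pushed into Theorem~\ref{signsa} and Corollary~\ref{principal3}. The only minor point worth noting is that we must invoke Proposition~\ref{s-a1} to ensure that the regulous $f$ fits into the framework of Theorem~\ref{signsa}, which was stated for continuous semi-algebraic functions.
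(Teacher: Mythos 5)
Your proof is correct and is essentially identical to the paper's: both directions are handled the same way, with the converse obtained by feeding Corollary~\ref{principal3} into the three hypotheses of Theorem~\ref{signsa}. The only additions (invoking Proposition~\ref{s-a1} for semi-algebraicity and noting $f\neq 0$ rules out length $0$) are harmless clarifications of points the paper leaves implicit.
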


\begin{proof}
The proof of the ``if'' is trivial.

Assume $\Z(f)$ is Zariski closed. By Corollary \ref{principal3}, there
exist $p_1,p_2$ in $\Pol(X)$ such that $\S(f)=\S(p_1)$ and
$\S(-f)=\S(p_2)$. We conclude using Theorem
\ref{signsa}.
\end{proof}

\begin{cor}
\label{nbsignpol2}
Let $f\in
\SR^0(X)$, $k=\poldepth(f)$ and $((f_0,X_0,p_0,q_0),\ldots,(f_k,X_k,p_k,q_k))$
a ``polar sequence'' associated to $f$. Let 
$$t=\min\{i\in\{0,\ldots,k\}|\,\Z(f)\cap X_i\,{\rm is\,Zariski\,closed}\}.$$
Therefore, $$\length(f)\leq 1+2t.$$
\end{cor}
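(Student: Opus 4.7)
The plan is to adapt the formula from Proposition \ref{nbsignpol} by truncating the polar sequence at index $t-1$, where it still correctly represents $\sign(f)$ on $X \setminus X_t$, and then splicing in one extra $2$-dimensional form that realizes $\sign(f_t)$ on $X_t$ while contributing $0$ on $X \setminus X_t$. The case $t=0$ is immediate from Theorem \ref{Z-closed2} (since $\Z(f) \cap X_0 = \Z(f)$ is Zariski closed), so I focus on $t \geq 1$.

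Since $f_t = f|_{X_t}$, we have $\Z(f_t) = \Z(f) \cap X_t$ Zariski closed in $X_t$ by definition of $t$. Applying Theorem \ref{Z-closed2} to $f_t \in \SR^0(X_t)$ produces $p_t \in \Pol(X_t)$ such that $\Lambda(<f_t>) = \Lambda(<p_t>)$ on $X_t$; by \cite[Prop.~3.2.3]{BCR} I may view $p_t$ as an element of $\Pol(X)$. Since $X_t$ is a Zariski closed subset of $X$, I also pick $Q \in \Pol(X)$ with $Q \geq 0$ on $X$ and $\Z(Q) \cap X = X_t$ (for instance, a sum of squares of generators of $\I(X_t)$).

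I propose the form
\[
\rho \;=\; <p_0 q_0> \;\perp\; \Bigl( \perp_{i=1}^{t-1} (<1, -\prod_{j=0}^{i-1} q_j^2> \otimes <p_i q_i>) \Bigr) \;\perp\; (<1, -Q> \otimes <p_t>),
\]
of dimension $1 + 2(t-1) + 2 = 1 + 2t$, and claim $\Lambda(\rho) = \sign(f)$ on $X$. I would verify this on two strata. On $X \setminus X_t$, the proof of Proposition \ref{nbsignpol} at stage $m = t-1$ gives that the first $1 + 2(t-1)$ entries sum to $\sign(f)$, while the last entry $<p_t, -Q p_t>$ has signature $0$ because $Q > 0$ there. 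On $X_t$, the inclusions $X_t \subseteq X_{i+1} \subseteq \Z(q_i)$ for $0 \leq i \leq t-1$ make every $p_i q_i$ vanish on $X_t$, so the first $1 + 2(t-1)$ entries contribute $0$; the last entry collapses to $<p_t, 0>$ (as $Q$ vanishes on $X_t$), contributing $\sign(p_t) = \sign(f_t) = \sign(f)|_{X_t}$.

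The main subtlety is the choice of $Q$: one might hope to reuse $\prod_{j=0}^{t-1} q_j^2$ from Proposition \ref{nbsignpol}, but its zero locus on $X$ is $\bigcup_{j=0}^{t-1} \Z(q_j) \cap X$, which can strictly contain $X_t$; at such extra points the spliced term would have signature $\sign(p_t)$ rather than $0$, spoiling the identity on $X \setminus X_t$. Having $\Z(Q) \cap X$ \emph{exactly} equal to $X_t$ fixes this, and the bound $\length(f) \leq 1 + 2t$ then follows from $\dim \rho = 1 + 2t$.
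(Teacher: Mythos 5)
Your proof is correct and follows exactly the route the paper intends: it truncates the formula of Proposition \ref{nbsignpol} at stage $t-1$ and splices in a single $2$-dimensional term built from the polynomial that Theorem \ref{Z-closed2} provides for $f_t$ on $X_t$ (the paper's own proof is a one-line citation of these two results, and Example \ref{cartan} confirms this is the intended form). Your explicit insistence on a multiplier $Q$ with $\Z(Q)\cap X$ equal to $X_t$ exactly, rather than the product $\prod_{j=0}^{t-1}q_j^2$ whose zero locus is a union, is a genuine point of care that the paper leaves implicit.
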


\begin{proof}
The proof is straightforward using Proposition \ref{nbsignpol} and
Theorem \ref{Z-closed2}.
\end{proof}

By the following proposition, we will improve the results of Theorems
\ref{nbsigngen} and
\ref{nbsignnormal}.
\begin{prop}
\label{nbsign2-3}
Let $f\in\SR^0(X)$ such that $\dim(\pol(f))\leq 1$.
Then $$\length(f)\leq 3.$$
More precisely, if $f=\dfrac{p}{q}$ on
$\dom(f)$, $p,q\in\Pol(X)$, $\Z(q)=\pol(f)$, then there exist
$h,r\in\Pol(X)$ such that $\Lambda(<f>)=\Lambda(<pq>\perp
<1,-r^2>\otimes <h>)$
on $X$.
\end{prop}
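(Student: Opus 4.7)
The plan is that $pq$ already carries the sign of $f$ off the polar locus $W=\pol(f)$ and vanishes identically on $W$, so I only need to supply a correction concentrated on $W$. A direct computation shows
\begin{equation*}
\Lambda(\langle 1,-r^{2}\rangle\otimes\langle h\rangle)=\sign(h)\cdot\un_{\Z(r)},
\end{equation*}
since on $X\setminus\Z(r)$ the signs of $h$ and $-r^{2}h$ cancel, while on $\Z(r)$ only $\sign(h)$ survives. Thus the proposition reduces to finding $h,r\in\Pol(X)$ such that $\sign(h)\cdot\un_{\Z(r)}$ coincides with $\sign(f|_{W})\cdot\un_{W}$ as a constructible function on $X$.

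To construct them, set $W^{\pm}=\{x\in W:\pm f(x)>0\}$ and let $Z=\overline{W^{+}\cup W^{-}}^{Zar}\subseteq X$ be the Zariski closure. Because $W$ is Zariski closed and contains $W^{+}\cup W^{-}$ one has $Z\subseteq W$, so $\dim Z\leq 1$. The crux of the argument is the claim that $\Z(f|_{Z})$ is \emph{finite}. To see this, decompose $Z$ into its Zariski-irreducible components: a $1$-dimensional component $Y$ of $Z$ is, by construction, the Zariski closure of $Y\cap(W^{+}\cup W^{-})$, so $Y\cap\Z(f)$ lies in the complement of a Zariski dense open subset of an irreducible curve and is therefore finite; a $0$-dimensional irreducible component of $Z$ is already a point of $W^{+}\cup W^{-}$, hence disjoint from $\Z(f)$. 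Taking the finite union, $\Z(f|_{Z})$ is finite, and in particular Zariski closed in $Z$.

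Since $f|_{Z}$ is a continuous semi-algebraic function on a real algebraic set of dimension at most $1$ with Zariski-closed zero set, Proposition \ref{signsadim1} furnishes $h_{0}\in\Pol(Z)$ satisfying $\Lambda(\langle h_{0}\rangle)=\Lambda(\langle f|_{Z}\rangle)$ on $Z$. I lift $h_{0}$ to $h\in\Pol(X)$ via \cite[Prop. 3.2.3]{BCR} and choose $r\in\Pol(X)$ with $\Z(r)=Z$, for instance as a sum of squares of generators of the vanishing ideal of $Z$.

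The last step is a pointwise verification of $\Lambda(\langle f\rangle)=\Lambda(\langle pq\rangle\perp\langle 1,-r^{2}\rangle\otimes\langle h\rangle)$. On $X\setminus Z$ the condition $r(x)\neq 0$ kills the tensor contribution, so the right-hand side reduces to $\sign(pq)(x)$; this matches $\sign(f)(x)$ because either $x\in X\setminus W$ (where $f=p/q$) or $x\in W\setminus Z\subseteq W\setminus(W^{+}\cup W^{-})$ (where $f(x)=0$ and $pq(x)=0$). On $Z$ one has $r(x)=0$ and $q(x)=0$, so the right-hand side collapses to $\sign(h)(x)=\sign(f|_{Z})(x)=\sign(f)(x)$. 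The main obstacle is the finiteness of $\Z(f|_{Z})$: it is precisely the trick of passing from $W$ to the Zariski closure $Z$ of the nonvanishing locus of $f|_{W}$ that replaces the possibly non-Zariski-closed zero set of $f|_{W}$ by a finite one, which is exactly the hypothesis needed to invoke the one-dimensional result \ref{signsadim1} and to produce a single polynomial $h$ doing the job.
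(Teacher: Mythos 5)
Your overall architecture is sound and the final pointwise verification is correct; in fact your route is in places leaner than the paper's (you reduce everything to the one-dimensional Proposition \ref{signsadim1} applied to $f|_Z$, whereas the paper invokes Theorem \ref{Z-closed2} and hence the machinery of Section 5). But the crucial step --- the finiteness of $\Z(f|_Z)$ --- is not justified by the reason you give. You argue that a one-dimensional irreducible component $Y$ of $Z$ equals $\overline{Y\cap(W^+\cup W^-)}^{Zar}$ (true), and then conclude that $Y\cap\Z(f)$, being ``the complement of a Zariski dense open subset of an irreducible curve,'' is finite. The set $Y\cap(W^+\cup W^-)$ is Euclidean open and Zariski dense in $Y$, but it is \emph{not} Zariski open, and the complement of a Zariski dense semi-algebraic subset of an irreducible curve can perfectly well be infinite: on the line $\R$ the half-line $(0,+\infty)$ is Zariski dense and its complement $(-\infty,0]$ is infinite. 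So as written the inference is false, and it would ``prove'' the same conclusion for an arbitrary continuous semi-algebraic $f$ that is regular off a curve, where it cannot hold in general.

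What rescues the claim is precisely the regulous structure of $\Z(f)$, which you never use at this point and which is the pivot of the paper's own proof. Write $Y=F\sqcup P$ with $F=\overline{Y_{reg}}^{\Co}$ the one-dimensional $\Co$-irreducible component and $P$ finite (\cite[Thm.~6.7]{FHMM}). Since $f|_Y$ is regulous, $\Z(f)\cap Y$ is $\Co$-closed in $Y$; if it had dimension $1$ it would contain $F$, so $\{f\neq 0\}\cap Y\subset Y\setminus F$ would be finite, contradicting $\dim\overline{Y\cap(W^+\cup W^-)}^{Zar}=\dim Y=1$. Hence $\Z(f)\cap Y$ is finite. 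With this substitution your proof goes through, and the remaining steps (the identity $\Lambda(\langle 1,-r^2\rangle\otimes\langle h\rangle)=\sign(h)\cdot\un_{\Z(r)}$, the lift of $h_0$ by \cite[Prop.~3.2.3]{BCR}, the case analysis on $X\setminus Z$ versus $Z$) are all correct. But as submitted, the key finiteness claim rests on a false general principle, so the proof has a genuine gap.
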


\begin{proof}
Let $f\in
\SR^0(X)$ such that $\dim(\pol(f))\leq 1$. We
get the proof, using Corollary \ref{nbsignpol2}, if $\Z(f)\cap
\pol(f)$ is Zariski closed (it is automatically the case when $\dim(\pol(f))<1$). So we assume $\dim
(\pol(f))=1$ and $\Z(f)\cap \pol(f)$ is not Zariski closed. 
We write $f=\dfrac{p}{q}$ on
$\dom(f)$ with $p,q\in\Pol(X)$ and $\Z(q)=\pol(f)$. We
decompose $\Z(q)=\pol(f)$ as a union $C_1\cup\cdots\cup C_t\cup W$ where
the $C_i$ are irreducible real algebraic curves and $\dim(W)=0$.
For each curve $C_i$, we denote by $F_i$ the regulous
closed set $\overline{(C_i)_{reg}}^{\Co}=\overline{(C_i)_{reg}}^{eucl}$. By \cite[Thm. 6.7]{FHMM}, the sets
$F_i$ are $\Co$-irreducible and $C_i\setminus F_i$ is empty or a
finite set of points. Since $\Z(f)\cap \pol(f)$ is not Zariski
closed, we have $\dim(\Z(f)\cap \pol(f))=1$. Since the $F_i$ are
$\Co$-irreducible, we get that $F_i\subset \Z(f)$ if and only if
$\dim(\Z(f)\cap C_i)=1$. Remark that there exists at least one $F_i$
such that $F_i\subset \Z(f)$ but $C_i\not\subset \Z(f)$ since
$\Z(f)\cap \pol(f)$ is not Zariski closed.
If $F_i\subset \Z(f)$ then $\Lambda(<f>)=\Lambda(<pq>)$ on $C_i$
outside a finite number of points. If $F_i\not\subset \Z(f)$ then
$\Z(f)\cap C_i$ is Zariski closed. It follows that there exists a real
algebraic subset $Y$ of $\pol(f)$ such that $\Z(f)\cap Y$ is Zariski
closed and such that $\Lambda(<f>)=\Lambda(<pq>)$ on $X\setminus Y$.
By Theorem \ref{Z-closed2}, there exists $h\in\Pol(X)$ such that
$\Lambda(<f>)=\Lambda(<h>)$ on $Y$. Let $r\in\Pol(X)$ be such that
$\Z(r)=Y$. The proof is done since 
$$\Lambda(<f>)=\Lambda(<pq>\perp <1,-r^2>\otimes <h>)\,{\rm on}\,X.$$
\end{proof}

\begin{rem}
Using Proposition \ref{nbsign2-3}, we recover the result of
Proposition \ref{signdim=1}: Let $X$ be an irreducible algebraic curve
and let $f\in\SR^0(X)$ such that $\Z(f)$ is not Zariski closed. By
Proposition \ref{nbsign2-3}, if $f=\dfrac{p}{q}$ on
$\dom(f)$, $p,q\in\Pol(X)$, $\Z(q)=\pol(f)$, then there exist
$h,r\in\Pol(X)$ such that $\Lambda(<f>)=\Lambda(<pq>\perp
<1,-r^2>\otimes <h>)$
on $X$. Since $\dim \Z(f)=1$ then $pq=0$ on $X$ (i.e $p=0$ and
$f$ is a continuous extension to $X$ of the null rational function) and
thus $\length(f)\leq 2$.
\end{rem}

As announced, we improve the upper bounds on
$\length$ given in Theorems \ref{nbsigngen} and \ref{nbsignnormal}.

\begin{thm}
\label{nbsigngen2}
Let $X\subset\R^n$ be a real algebraic set of dimension $d$. Then
$$\length(\SR^0(X))=1\,\,{\rm if}\,d=0,$$
$$\length(\SR^0(X))\leq 3\,\,{\rm if}\,d=1,$$
$$\length(\SR^0(X))\leq 2d-1\,\,{\rm else}.$$
\end{thm}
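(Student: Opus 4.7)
The plan is to handle the low-dimensional cases directly and, for $d\geq 2$, to interleave the polar sequence decomposition of Proposition \ref{nbsignpol} with Proposition \ref{nbsign2-3} via a splice that saves two dimensions over the naive construction. For $d=0$ the bound is immediate since regulous coincides with regular; for $d=1$, one has $\dim \pol(f)\leq 0\leq 1$, so Proposition \ref{nbsign2-3} gives $\length(f)\leq 3$ at once.

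For $d\geq 2$ and $f\in\SR^0(X)$, I would take a polar sequence $((f_0,X_0,p_0,q_0),\ldots,(f_k,X_k,p_k,q_k))$ of $f$ and set
$$t=\min\{i\geq 0:\dim X_{i+1}\leq 1\}.$$
Since $\dim X_i$ drops strictly from $d$, we have $t\leq d-2$. Apply Proposition \ref{nbsign2-3} to $f_t\in\SR^0(X_t)$ (its hypothesis holds because $\dim\pol(f_t)=\dim X_{t+1}\leq 1$); its proof provides a Zariski closed subset $Y\subset X_{t+1}$ on which $\Z(f_t)\cap Y$ is Zariski closed, together with $s,h\in\Pol(X_t)$ (lifted to $\Pol(X)$ by \cite[Prop. 3.2.3]{BCR}) with $\Z(s)\cap X_t=Y$ and
$$\Lambda(<f_t>)=\Lambda(<p_tq_t>\perp <1,-s^2>\otimes <h>)\quad\text{on }X_t.$$
Choosing $r_i\in\Pol(X)$ with $\Z(r_i)\cap X=X_i$ for $i=1,\ldots,t$, I propose the identity
$$\Lambda(<f>)=\Lambda(\mu\perp\tau_1\perp\tau_2)\quad\text{on }X,$$
where
$$\mu=<p_0q_0>\perp_{i=1}^{t-1}\bigl(<1,-r_i^2>\otimes <p_iq_i>\bigr),\quad \tau_1=<1,-r_t^2>\otimes <p_tq_t>,\quad \tau_2=<1,-r_t^2-s^2>\otimes <h>,$$
of total dimension $(2t-1)+2+2=2t+3\leq 2d-1$ (the case $t=0$ is Proposition \ref{nbsign2-3} applied directly to $f$).

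The decisive ingredient is $\tau_2$: since $r_t^2+s^2$ is non-negative and vanishes exactly on $\Z(r_t)\cap \Z(s)\cap X=X_t\cap \Z(s)=Y$, the two-dimensional form $<1,-r_t^2-s^2>$ already has signature $\mathbf{1}_Y$, so $\tau_2=\sign(h)\cdot\mathbf{1}_Y$ in dimension $2$, avoiding the dimension-$4$ cost of $<1,-r_t^2>\otimes <1,-s^2>\otimes <h>$. Verifying the identity is a pointwise check on three regions: on $X\setminus X_t$ the forms $\tau_1,\tau_2$ vanish because $r_t\neq 0$, and $\mu$ recovers $\sign(f)$ by the argument of Proposition \ref{nbsignpol}; on $X_t\setminus Y$ one has $s\neq 0$ so $\tau_2=0$, $\mu=0$ because $q_0,\ldots,q_{t-1}$ all vanish on $X_t$, and $\tau_1=\sign(p_tq_t)=\sign(f_t)=\sign(f)$ (using $X_{t+1}\setminus Y\subset \Z(f_t)$ by the construction of $Y$); on $Y$, $\tau_1$ vanishes because $q_t=0$ on $X_{t+1}\supset Y$, $\mu=0$, and $\tau_2=\sign(h)=\sign(f)$. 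The main obstacle I foresee is ensuring that $\mu$ really gives $\sign(f)$ on $X\setminus X_t$ and vanishes on $X_t$; this requires the refinement that the $r_i$ (equivalently the $q_i$) have their zero sets in $X$ equal precisely to $X_i$ (resp.\ $X_{i+1}$), which is possible since each $X_i$ is Zariski closed in $X$.
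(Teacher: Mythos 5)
Your proof is correct and follows essentially the same route as the paper: run the polar sequence until the polar locus has dimension at most one, apply Proposition \ref{nbsign2-3} at that stage, and splice the result back with two-dimensional indicator forms, for a total of $2t+3\leq 2d-1$ signs. The one point where you genuinely sharpen the paper's write-up is the splice itself: the paper's final term uses $<1,-\prod_{j}q_j^2r^2>$, whose zero set is the \emph{union} of the $\Z(q_j)$ and $\Z(r)$ rather than the intended deep stratum, whereas your choice of $r_i$ with $\Z(r_i)\cap X=X_i$ and the sum of squares in $<1,-r_t^2-s^2>$ cut out exactly $X_i$ and $Y$, which is what the pointwise verification actually requires.
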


\begin{proof}
By Proposition \ref{nbsign2-3}, we are
left to prove the theorem for $d>2$. Let $f\in\SR^0(X)$. By
Proposition \ref{nbsignpol}, we can assume that $1+2\poldepth(f)>2d-1$
i.e $\poldepth(f)=d$.\\ 
Let $((f_0,X_0,p_0,q_0),\ldots,(f_d,X_d,p_d,q_d))$ be 
a polar sequence associated to $f$. For $i=0,\ldots,d$, we have $\dim
X_i=d-i$. In particular $\dim X_{d-2}=2$ and by Proposition
\ref{nbsign2-3}
there exist $h,r\in\Pol(X)$ such that $\Lambda(<f_{d-2}>)=\Lambda(<p_{d-2}q_{d-2}>\perp
<1,-r^2>\otimes <h>)$
on $X_{d-2}$. Then
$$\Lambda(<f>)=\Lambda(<p_0q_0>\perp_{i=1}^{d-2}  (<1,-\prod_{j=0}^{i-1}q_{j}^2>\otimes
<p_iq_i>) \perp <1,-\prod_{j=0}^{d-3}q_{j}^2r^2>\otimes
<h> )$$ on $X$ and the proof is done.
\end{proof}

\begin{thm}
\label{nbsignnormal2}
Let $X\subset\R^n$ be a real algebraic set of dimension $d$ such that
$\codim(\Sing(Y))>1$ for any irreducible component $Y$ of $X$. Then
$$\length(\SR^0(X))=1\,\,{\rm if}\,d=0\,{\rm or}\,1,$$
$$\length(\SR^0(X))\leq 3\,\,{\rm if}\,d=2$$
$$\length(\SR^0(X))\leq 2d-3\,\,{\rm else}.$$
\end{thm}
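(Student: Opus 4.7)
The plan is to repeat the argument of Theorem \ref{nbsigngen2}, saving one level of the polar sequence by invoking Corollary \ref{cornormal}. Under the normality-type hypothesis, that corollary gives $\codim(\pol(f)) > 1$ for every $f \in \SR^0(X)$, hence $\dim X_1 \leq d-2$ in any polar sequence $((f_0, X_0, p_0, q_0), \ldots, (f_k, X_k, p_k, q_k))$ associated to $f$. Since $\dim X_{i+1} < \dim X_i$ at each further step, one inductively obtains $\dim X_i \leq d-1-i$ whenever $X_i \neq \emptyset$. In particular $\poldepth(f) \leq d-1$, and, crucially for what follows, $\dim X_{d-3} \leq 2$ when $d \geq 3$.

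The cases $d = 0, 1$ are immediate: for $d = 0$ regulous equals regular, and for $d = 1$ the bound forces $X_1 = \emptyset$, so $f$ is regular. For $d = 2$ the bound yields $\dim \pol(f) \leq 0$, in particular $\leq 1$, and Proposition \ref{nbsign2-3} gives $\length(f) \leq 3$ directly.

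For $d \geq 3$, I would apply Proposition \ref{nbsign2-3} at the level $X_{d-3}$, where $\dim \pol(f_{d-3}) \leq 1$ because $\pol(f_{d-3}) = X_{d-2} \subsetneq X_{d-3}$. This yields polynomials $h, r$ on $X_{d-3}$, which I would extend to elements of $\Pol(X)$, choosing the extension of $r$ so that $\Z(r) \subset X_{d-3}$ in $X$; this last requirement is achievable since the set $Y \subset \pol(f_{d-3}) = X_{d-2}$ supplied by the proof of Proposition \ref{nbsign2-3} is a real algebraic subset of $X$. Thus
$$\Lambda(<f_{d-3}>) = \Lambda(<p_{d-3} q_{d-3}> \perp <1,-r^2> \otimes <h>) \text{ on } X_{d-3}.$$
Splicing this into the polar decomposition of Proposition \ref{nbsignpol}, exactly as done in Theorem \ref{nbsigngen2}, produces
$$\Lambda(<f>) = \Lambda\Bigl( <p_0 q_0> \perp_{i=1}^{d-3} \bigl( <1, -\prod_{j=0}^{i-1} q_j^2> \otimes <p_i q_i> \bigr) \perp <1, -\prod_{j=0}^{d-4} q_j^2 r^2> \otimes <h> \Bigr)$$
on $X$, a form of dimension $1 + 2(d-3) + 2 = 2d - 3$.

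The only delicate point, inherited from the proof of Theorem \ref{nbsigngen2}, is verifying the sign identity off $X_{d-3}$. The final tensor $<1, -\prod q_j^2 r^2> \otimes <h>$ must contribute zero on $X \setminus X_{d-3}$, which uses precisely that $\Z(r) \subset X_{d-3}$ (so that $r$ and some $q_j$ are both nonzero there, forcing $-\prod q_j^2 r^2 < 0$ and the pair of signs to cancel). This is the main bookkeeping step and is identical in spirit to the cancellation already carried out in Theorem \ref{nbsigngen2}; beyond it the argument is purely combinatorial.
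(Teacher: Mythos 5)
Your proposal follows the paper's proof essentially verbatim: Corollary \ref{cornormal} shaves one level off the polar sequence, Proposition \ref{nbsign2-3} is applied at the (at most) two-dimensional stratum $X_{d-3}$, and the output is spliced into the decomposition of Proposition \ref{nbsignpol} to get a form of dimension $1+2(d-3)+2=2d-3$; your handling of $d\le 2$ and your uniform treatment of $d=3$ are only trivial repackagings of the paper's case split. One caveat on your final ``bookkeeping'' remark: at a point of $X\setminus X_{d-3}$ the nonvanishing of $r$ and of \emph{some} $q_j$ does not force $\prod_{j=0}^{d-4}q_j^2r^2>0$ (all factors must be nonzero), so on intermediate strata such as $X_1\setminus X_2$ the factor $\langle 1,-\prod_j q_j^2r^2\rangle$ still has signature $1$ and your cancellation argument does not close the gap; this imprecision is, however, already present in the displayed identities of Proposition \ref{nbsignpol} and Theorems \ref{nbsigngen2}--\ref{nbsignnormal2} themselves, and is repaired uniformly by cutting out $X_i$ with $\sum_{j<i}q_j^2$ (and $Y$ with $\sum_{j\le d-4}q_j^2+r^2$) in place of the products of squares.
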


\begin{proof}
For $d\leq 2$ the theorem follows from Theorem
\ref{nbsignnormal}. For $d=3$ the theorem follows from Proposition
\ref{nbsign2-3} and Corollary \ref{cornormal}.

Assume $d\geq 4$. Let $f\in\SR^0(X)$. By Corollary \ref{cornormal}, we
have $\poldepth(f)\leq d-1$. By Theorem \ref{nbsignnormal} and
Proposition \ref{nbsignpol}, we get $1+2\poldepth(f)>2d-3$ i.e
$\poldepth(f)=d-1$. Let
$((f_0,X_0,p_0,q_0),\ldots,(f_{d-1},X_{d-1},p_{d-1},q_{d-1}))$ be 
a polar sequence associated to $f$.
By Corollary \ref{cornormal}, we
have $\dim(\pol(f))\leq d-2$ and thus it follows that for
$i=1,\ldots,d-1$ we have $\dim
X_i=d-i-1$. In particular $\dim X_{d-3}=2$ and by Proposition
\ref{nbsign2-3}
there exist $h,r\in\Pol(X)$ such that $\Lambda(<f_{d-3}>)=\Lambda(<p_{d-3}q_{d-3}>\perp
<1,-r^2>\otimes <h>)$
on $X_{d-3}$. Then
$$\Lambda(<f>)=\Lambda(<p_0q_0>\perp_{i=1}^{d-3}  (<1,-\prod_{j=0}^{i-1}q_{j}^2>\otimes
<p_iq_i>) \perp <1,-\prod_{j=0}^{d-4}q_{j}^2r^2>\otimes
<h> )$$ on $X$ and the proof is done.
\end{proof}

\begin{ex}
\label{cartan}
{\rm 
Consider $f=z-\dfrac{x^3}{x^2+y^2} \in\SR^0(\R^3)$. So
$\Z(z-\dfrac{x^3}{x^2+y^2} )\subset \R^3$ is the ``canopy'' of the
  Cartan umbrella $V=\Z(z(x^2+y^2)-x^3)\subset\R^3$. 
\begin{figure}[ht]
\centering
\includegraphics[height =6cm]{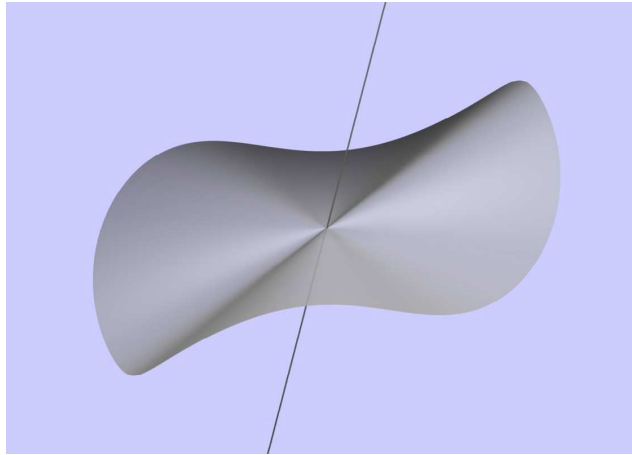}
\caption{Cartan umbrella.}
        \label{fig.cartan}
\end{figure}
Moreover, $\pol(f)$ is the stick of the umbrella and $\Z(f)\cap
\pol(f)=\{(0,0,0)\}$. According to Corollary \ref{nbsignpol2} we
get: $$\Lambda(<f>)=\Lambda(<(x^2+y^2)f>\perp <1,-x^2-y^2>\otimes
<z>)$$ on $\R^3$. Remark that since $\Z(f)$ is not Zariski closed then
$\length (f)>1$ (Theorem \ref{Z-closed2}). If
$\Lambda(<f>)=\Lambda(<p_1,p_2>)$ on $\R^3$ with
$p_1,p_2\in\Pol(\R^3)$ then it is easy to see that the product
$p_1p_2$ vanishes identically on $\R^3$. It follows that the form
$<p_1,p_2>$ is isotropic, a contradiction because $\length (f)>1$.
Hence $\length (f)=3$ and $\rho(f)=<(x^2+y^2)f>\perp <1,-x^2-y^2>\otimes
<z>$. From Theorem \ref{nbsignnormal2},
it follows that $\length(\SR^0(\R^3))=3$ and the bound given in
Theorem \ref{nbsignnormal2} is optimal for $d=3$.}
\end{ex}

\subsection{Characterization of regulous functions with length of sign
  equal to one}

Let $X\subset\R^n$ be a real algebraic set. By Theorem \ref{Z-closed2},
we know that a non-zero regulous function on $X$ has a length of sign equal to
one if and only if its zero set is Zariski closed.

We give some several equivalent
characterizations of regulous functions with length of sign equal to
one for central and irreducible real algebraic sets.
\begin{prop}
\label{complementary}
Let $X\subset\R^n$ be a central and irreducible real algebraic set. Let $0\not=f\in\SR^0(X)$.
The following properties are equivalent:\\
a) $\length(f)=1$.\\
b) $\Z(f)$ is Zariski closed.\\
c) $\S(f^2)=\S(f)\cup\S(-f)=X\setminus \Z(f)$ is principal.\\
d) $\S(f^2)\cap\overline{\Bd(\S(f^2))}^{Zar}=\emptyset$.
\end{prop}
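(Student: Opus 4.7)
The plan is to deduce all four equivalences by chaining together two results already established in the paper with one direct geometric argument that uses both the central and the irreducible hypotheses.

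First, $(a) \Leftrightarrow (b)$ is exactly the statement of Theorem \ref{Z-closed2} applied to $f$ (that theorem needs no assumption on $X$ beyond $f \neq 0$). Next, since $f^2 \in \SR^0(X)$, applying Theorem \ref{principal2} to $f^2$ immediately yields that $\S(f^2)$ is principal open if and only if $\S(f^2) \cap \overline{\Bd(\S(f^2))}^{Zar} = \emptyset$, i.e.\ $(c) \Leftrightarrow (d)$.

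The implication $(b) \Rightarrow (c)$ is trivial: if $\Z(f) = \Z(h)$ for some $h \in \Pol(X)$, then $\S(f^2) = X \setminus \Z(f) = X \setminus \Z(h) = \S(h^2)$, which is principal by definition.

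The substantive step is $(c) \Rightarrow (b)$; this is where both the central and the irreducible hypotheses actually enter, and it is the only real obstacle. Suppose $\S(f^2) = \S(g)$ for some $g \in \Pol(X)$. Taking complements in $X$,
\[
\Z(f) = X \setminus \S(g) = \{x \in X : g(x) \leq 0\},
\]
so the open semi-algebraic set $\{g < 0\}$ is contained in $\Z(f)$. I claim $\{g<0\} = \emptyset$. On one hand, $X$ being central means, by \cite[Prop. 7.6.2]{BCR}, that the local dimension of $X$ equals $\dim X$ at every point, so any non-empty open subset of $X$ has dimension $\dim X$. On the other hand, centrality gives $\overline{X_{reg}}^{eucl} = X$, hence also $\overline{X_{reg}}^{\Co} = X$ since the euclidean topology refines the $\Co$-topology; Proposition \ref{inclusion} then says $\phi^0$ is injective, so $f$ may be identified with a non-zero element of $\K(X)$, which is a field because $X$ is irreducible. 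Consequently $\Z(f) \cap \dom(f)$ is a proper Zariski closed subset of the irreducible Zariski-open set $\dom(f)$ and has dimension $< \dim X$; combined with $\dim \pol(f) < \dim X$, this forces $\dim \Z(f) < \dim X$. If $\{g<0\}$ were non-empty, it would be a subset of $\Z(f)$ of dimension $\dim X$, a contradiction. Therefore $g \geq 0$ on $X$ and $\Z(f) = \{g = 0\} = \Z(g)$ is Zariski closed, proving $(b)$.
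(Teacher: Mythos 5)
Your proof is correct and follows essentially the same route as the paper: a) $\Leftrightarrow$ b) and c) $\Leftrightarrow$ d) are quoted from Theorems \ref{Z-closed2} and \ref{principal2}, b) $\Rightarrow$ c) is immediate, and the substantive step c) $\Rightarrow$ b) is handled exactly as in the paper by showing that the open set $\{x\in X:\,g(x)<0\}\subset\Z(f)$ must be empty for dimension reasons. The only (harmless) divergence is in the justification of the two dimension facts: you use the local-dimension characterization of centrality from \cite[Prop.~7.6.2]{BCR} to see that a non-empty open subset has full dimension, and you rederive $\dim\Z(f)<\dim X$ from the injectivity of $\phi^0$ together with irreducibility, whereas the paper obtains the former via the real spectrum and cites \cite[Prop.~6.6]{FHMM} for the latter.
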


\begin{proof}
Equivalence between a) and b) (resp. c) and d)) is Theorem
\ref{Z-closed2} (resp. Theorem \ref{principal2}) and the assumptions
that $X$ is central and irreducible are not required. It is clear
that b) implies c). We are reduced to proving c) implies b). Assume $\S(f^2)=X\setminus \Z(f)$ is principal,
namely $\S(f^2)=\S(p)$ for $p\in\Pol(X)$. Since $f\not=0$ then
$\Z(f)$ is a proper subset of $X$. Since $\overline{X_{reg}}^{eucl} =X$ ($X$ is central) and
$X$ is irreducible then it follows from \cite[Prop. 6.6]{FHMM} that
$\dim \Z(f)<\dim X$.
Notice that $\S(-p)\subset \Z(f)$. If $\S(-p)\not=\emptyset$ then we claim
that $\dim \S(-p)=\dim X$: Let $\tilde{\S}(-p)$ be the constructible
subset of $\Sp_r \Pol(X)$ associated to $\S(-p)$ (see
\cite[Ch. 7]{BCR}). We have $\dim \S(-p)=\dim \tilde{\S}(-p)$
\cite[Prop. 7.5.6]{BCR}. Since $X$ is central and $ \S (-p)$ is non-empty and open then $\S(-p)\cap X_{reg}\not=\emptyset$. By \cite[Prop. 7.6.2]{BCR}, 
$\tilde{\S}(-p)\cap\Sp_r \K(X)\not=\emptyset$ 
and we get $\dim
\tilde{\S}(-p)=\dim X$ \cite[Prop. 7.5.8]{BCR} which gives the claim.
It follows from the claim and above remarks that $\S(-p)=\emptyset$ and thus 
$\Z(f)=\Z(p)$ is
Zariski closed.
\end{proof}

\begin{cor}
\label{corcomplementary}
Let $X\subset\R^n$ be a central and irreducible real algebraic set. Let $0\not=f\in\SR^0(X)$ such
that $\S(f)$ is principal and $f$ is nonnegative on $X$. Then
$\length(f)=1$.
\end{cor}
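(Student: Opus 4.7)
The plan is to reduce the statement directly to Proposition \ref{complementary}, which has already done all the hard work. The key observation is that the nonnegativity hypothesis on $f$ collapses the set $\S(f^2)$ onto $\S(f)$.

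First, I would note that since $f \geq 0$ on $X$, the set $\S(-f) = \{x \in X \mid f(x) < 0\}$ is empty. Therefore
\[
\S(f^2) = \S(f) \cup \S(-f) = \S(f).
\]
By the hypothesis, $\S(f)$ is principal, so $\S(f^2)$ is principal as well. This is precisely condition (c) in Proposition \ref{complementary}.

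Since $X$ is central and irreducible, Proposition \ref{complementary} applies and tells us that (c) is equivalent to (a), namely $\length(f) = 1$. This completes the argument.

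There is no serious obstacle here: the corollary is essentially a one-line deduction from Proposition \ref{complementary} once one observes that the nonnegativity of $f$ forces $\S(f^2) = \S(f)$. The only thing to be mindful of is ensuring that the hypotheses of Proposition \ref{complementary} (centrality and irreducibility of $X$, and $f \neq 0$) are all in place, which they are by assumption.
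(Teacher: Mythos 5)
Your proof is correct and is exactly the intended deduction: the paper states this as an immediate corollary of Proposition \ref{complementary} without writing out a proof, and the observation that $f\geq 0$ forces $\S(-f)=\emptyset$, hence $\S(f^2)=\S(f)$ is principal, so condition (c) of that proposition holds and gives $\length(f)=1$, is precisely the argument being invoked.
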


\begin{ex}
{\rm The assumption that $X$ is central in 
  Proposition \ref{complementary} and Corollary \ref{corcomplementary}
  is a necessary assumption. Consider the regulous function
  $f=1-\dfrac{x^3}{x^2+y^2}$ restricted to $X=\Z(y^2-x^3+x^2)$ of Example \ref{exemple1}, $f$ is
  non-negative on $X$, $\S(f)\cap X$ is principal ($\S(f)\cap
  X=\S(1-x)\cap X$) but $\Z(f)\cap X$ is
  not Zariski closed. The assumption that $X$ is irreducible is also a
necessary assumption, see Example \ref{exdim1red}.}
\end{ex}

\begin{ex}
{\rm 
We have already seen that if $f$ is a regulous function on a real
algebraic set $X$ then the property that $\Z(f)$ is Zariski closed
(condition 1) of Theorem \ref{signsa}) implies that $\S(f)$ and
$\S(-f)$ are both principal (conditions 2) and 3) of Theorem
\ref{signsa}). We prove now that the converse is not always true even
if $X$ is central and irreducible.
Consider the following regulous functions on the plane:
$h=(1-\dfrac{x^3}{x^2+y^2})^2$,
$g=-(y^2+(x+\dfrac{1}{2})(x-\dfrac{1}{2})(x-4)(x-5))$, $f=hg$.
We have $\Bd(\S(f))=\Z(g)=\overline{\Bd(\S(f))}^{Zar}$, hence $\S(f)$
is principal (Theorem \ref{principal2}) and more precisely
$\S(f)=\S(g)$. We have $\Bd(\S(-f))=\Z(g)\cup \Z(h)$, hence
$\overline{\Bd(\S(-f))}^{Zar}=\Z(g)\cup\Z((x^2+y^2)^2h)=\Z(g)\cup\Z(h)\cup\{(0,0)\}$.
Since $g$ and $f$ are both positive at the origin then
$\overline{\Bd(\S(-f))}^{Zar}\cap \S(-f)=\emptyset$ and thus $\S(-f)$
is principal; more precisely $\S(-f)=\S(-g(x^2+y^2)^2h)$. We also have 
$\S(f^2)\cap\overline{\Bd(\S(f^2))}^{Zar}=\{(0,0)\}$ and thus $\Z(f)$ is
not Zariski closed (Proposition \ref{complementary}).}
\end{ex}

In the previous example, the problems arise in part because of the
$\Co$-reducibility of the
zero set of the regulous function $f$.
\begin{prop}
\label{cond2et3}
Let $X\subset\R^n$ be a central and irreducible real algebraic set of dimension $d$. Let $f\in\SR^0(X)$ be such that $\S(f)$ is principal, $\S(-f)$ is
principal, $\Z(f)$ is $\Co$-irreducible and $\Bd (\S(f))\cap \Bd (\S(-f))\not=\emptyset$. Then $\Z(f)$ is Zariski
closed.
\end{prop}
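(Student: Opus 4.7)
The plan is to establish $\mathcal Z(f)=\overline{\mathcal Z(f)}^{Zar}=:Y$ through a structural reduction using $\mathcal C$-irreducibility, followed by a symmetrization argument that is where the hypothesis $\mathrm{Bd}(\mathcal S(f))\cap \mathrm{Bd}(\mathcal S(-f))\neq\emptyset$ is crucial.

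First I would set up the basic geometry. Because a non-trivial Zariski decomposition $Y=A\cup B$ would induce a non-trivial $\mathcal C$-closed decomposition $\mathcal Z(f)=(\mathcal Z(f)\cap A)\cup(\mathcal Z(f)\cap B)$, the $\mathcal C$-irreducibility of $\mathcal Z(f)$ forces $Y$ to be Zariski irreducible. Next, since $X$ is central irreducible it is $\mathcal C$-irreducible (as in the proof of Proposition~\ref{signdim1}); and because $f\not\equiv 0$ (otherwise $\mathrm{Bd}(\mathcal S(\pm f))=\emptyset$, contradicting the hypothesis), $\mathcal Z(f)$ is a proper $\mathcal C$-closed subset of $X$, hence has dimension $<\dim X$ and in particular empty Euclidean interior in $X$. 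Every point of $\mathcal Z(f)$ is therefore a Euclidean accumulation point of $\mathcal S(f)\cup\mathcal S(-f)$, giving $\mathcal Z(f)=E_1\cup E_2$ with $E_1=\mathrm{Bd}(\mathcal S(f))$ and $E_2=\mathrm{Bd}(\mathcal S(-f))$.

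Setting $F=\overline{E_1}^{Zar}$ and $G=\overline{E_2}^{Zar}$, both contained in $Y$, I would apply $\mathcal C$-irreducibility to $\mathcal Z(f)\subset F\cup G$: the decomposition $\mathcal Z(f)=(F\cap \mathcal Z(f))\cup(G\cap \mathcal Z(f))$ into Zariski- (hence $\mathcal C$-) closed pieces forces $\mathcal Z(f)\subset F$ or $\mathcal Z(f)\subset G$. The hypotheses are symmetric in $f\leftrightarrow -f$ (which exchanges $E_1\leftrightarrow E_2$ and preserves the nonemptiness of $E_1\cap E_2$), so WLOG $\mathcal Z(f)\subset F$, whence $F=Y$ and Theorem~\ref{principal2} yields $\mathcal S(f)\cap Y=\emptyset$. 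Choosing $p_1\in\Pol(X)$ with $\mathcal S(p_1)=\mathcal S(f)$, continuity forces $p_1=0$ on every $x\in E_1$ (otherwise either $p_1(x)<0$ would contradict accumulation of $\mathcal S(p_1)$ at $x$, or $p_1(x)>0$ would contradict $x\in\mathcal Z(f)$); thus $p_1$ vanishes on $\overline{E_1}^{Zar}=Y$.

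The remaining and main step is to upgrade this to $G=Y$ as well, for then Theorem~\ref{principal2} applied to $\mathcal S(-f)$ gives $\mathcal S(-f)\cap Y=\emptyset$, so $Y\subset\mathcal Z(f)$ and we are done. I would argue by contradiction: if $G\subsetneq Y$ then $\dim G<\dim Y$, so $E_2\subset G$ is $\mathcal C$-nowhere dense in $\mathcal Z(f)$, and by $\mathcal C$-irreducibility $\mathcal Z(f)\setminus G$ is $\mathcal C$-dense and sits inside $E_1\setminus E_2$. For any such $x$, a Euclidean neighborhood $U$ satisfies $U\cap \mathcal S(-f)=\emptyset$, and combined with $Y\subset \mathcal Z(f)\cup\mathcal S(-f)$ this forces $U\cap Y\subset \mathcal Z(f)$, so $x\in\mathrm{Int}_Y(\mathcal Z(f))$. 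Thus $\mathrm{Int}_Y(\mathcal Z(f))$ is Zariski dense in $Y$. The point $x_0\in E_1\cap E_2$ now provides the contradiction: at $x_0\in Y$ both $\mathcal S(f)$ (entirely outside $Y$) and $\mathcal S(-f)$ accumulate, yet the $\mathcal C$-generic description of $\mathcal Z(f)$ as lying in $\mathrm{Int}_Y(\mathcal Z(f))$ combined with the fact that $\mathcal S(-f)\cap G=\emptyset$ (so $\mathcal S(-f)$ near $x_0$ must exit $G$ while $x_0\in G$) is expected to be incompatible.

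The main obstacle is precisely that last step: converting the dense local structure $\mathrm{Int}_Y(\mathcal Z(f))\subset Y$ together with the two-sided accumulation at the single point $x_0$ into a rigorous global contradiction. This is where the hypothesis $E_1\cap E_2\neq\emptyset$ is essential, as it rules out the counterexamples (like the isolated-point configurations arising from a $\mathcal C$-reducible ambient) in which $E_1$ and $E_2$ could live in transverse, Zariski-small pieces of $Y$.
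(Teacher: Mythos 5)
Your reduction is sound up to the point you flag yourself: you correctly obtain $\Z(f)=\Bd(\S(f))\cup\Bd(\S(-f))=E_1\cup E_2$, and $\Co$-irreducibility of $\Z(f)$ gives you $\Z(f)\subset\overline{E_1}^{Zar}$ \emph{or} $\Z(f)\subset\overline{E_2}^{Zar}$; but the argument needs \emph{both} conclusions simultaneously, and the ``WLOG'' followed by a contradiction argument for the other closure is exactly where the proposal breaks down. The configuration you reach ($\S(-f)$ disjoint from $G=\overline{E_2}^{Zar}$ yet accumulating at $x_0\in E_1\cap E_2\subset G$, with $\mathrm{Int}_Y(\Z(f))$ Zariski dense in $Y$) is not by itself contradictory --- an open semi-algebraic set can perfectly well accumulate at a point of a lower-dimensional algebraic set that it avoids --- and you concede that you cannot close this step. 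So there is a genuine gap.

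The missing idea is a dimension count, and it is precisely where the hypothesis $\Bd(\S(f))\cap\Bd(\S(-f))\neq\emptyset$ does its real work. A point $x_0$ of this intersection, together with centrality and irreducibility of $X$, produces a smooth point $x\in X_{reg}$ and a semi-algebraic chart $U$ (homeomorphic to a neighbourhood of the origin in $\R^d$) meeting both $\S(f)$ and $X\setminus\overline{\S(f)}^{eucl}$ (the latter contains $\S(-f)$); by \cite[Lem. 4.5.2]{BCR} the boundary $\Bd(\S(f))$ then has dimension $\geq d-1$, and since $\Bd(\S(f))\subset\Z(f)$ with $\dim\Z(f)\leq d-1$ (as $X$ is central and irreducible), one gets $\dim E_1=\dim\Z(f)=d-1$. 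The same argument applied to $-f$ gives $\dim E_2=d-1$. Now $\overline{E_1}^{\Co}$ and $\overline{E_2}^{\Co}$ are $\Co$-closed subsets of the $\Co$-irreducible set $\Z(f)$ having full dimension $d-1$, hence \emph{each} of them equals $\Z(f)$ (a proper $\Co$-closed subset would have strictly smaller dimension). Therefore $\overline{E_1}^{Zar}=\overline{E_2}^{Zar}=\overline{\Z(f)}^{Zar}=Y$, and the two principality hypotheses give $\S(f)\cap Y=\emptyset$ and $\S(-f)\cap Y=\emptyset$ via Theorem \ref{principal2}, i.e. $Y\subset\Z(f)$, which is your intended conclusion. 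In short: you should not be choosing between $F$ and $G$ at all; the dimension argument forces both to equal $Y$ at once.
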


\begin{proof}
The sets $\S(-f)$ and $\S(f)$ are both non-empty since $\Bd
(\S(f))\cap \Bd (\S(-f))\not=\emptyset$.
As we have already explained in the proof of Proposition
\ref{complementary} and since $X$ is central and irreducible, we have $\dim \S(f)=\dim \S(-f)=d$. 
We claim that $\dim \Bd(\S(f))=d-1$. There exist $x\in X_{reg}$ and a semi-algebraic neighbourhood $U$ of $x$ in $X$
satisfying the following three properties:\\
$\bullet$ There exists a semi-algebraic homeomorphism from $U$ onto a
semi-algebraic $U'$ of the origin in $\R^d$ (mapping $x$ to the
origin).\\
$\bullet$ $\S(f)\cap U\not=\emptyset$.\\
$\bullet$ $(X\setminus\overline{\S(f)}^{eucl})\cap U\not=\emptyset$.\\
The first property follows from \cite[Prop. 3.3.11]{BCR}. The second
and the third properties are consequences of the assumption $\Bd (\S(f))\cap \Bd (\S(-f))\not=\emptyset$ and also because $X$ is central and
irreducible. Since $\Bd(\S(f))\cap
U=U\setminus( (\S(f)\cap
U)\cup((X\setminus\overline{\S(f)}^{eucl})\cap U))$, we get $\dim
\Bd(\S(f))\geq d-1$ applying \cite[lem. 4.5.2]{BCR}. Since $X$ is
irreducible and central then $\dim \Z(f)\leq d-1$
(\cite[Prop. 6.6]{FHMM}). Since $\Bd(\S(f))\subset \Z(f)$, we get the
claim and moreover we see that $\dim\Z(f)=d-1$.

By the same arguments we get $\dim \Bd(\S(-f))=d-1$. 
Since $X$ is
irreducible and central and since $\dim \Z(f)\leq d-1$ then
$X=\overline{X\setminus \Z(f)}^{eucl}$ and thus $\Z(f)=\Bd(\S(f))\cup\Bd(\S(-f))$.
Since $\Z(f)=\Bd(\S(f))\cup\Bd(\S(-f))$, $\dim \Z(f)=\dim \Bd(\S(f))=\dim \Bd(\S(-f))=d-1$ and
since by assumption $\Z(f)$ is $\Co$-irreducible then we get
$$\Z(f)=\overline{\Bd(\S(f))}^{\Co}=\overline{\Bd(\S(-f))}^{\Co}.$$
Hence
$\overline{\Z(f)}^{Zar}=\overline{\Bd(\S(f))}^{Zar}=\overline{\Bd(\S(-f))}^{Zar}$
and thus
$\overline{\Bd(\S(f^2))}^{Zar}=\overline{\Z(f)}^{Zar}=\overline{\Bd(\S(f))}^{Zar}=\overline{\Bd(\S(-f))}^{Zar}$.
Since $\S(f)$ is principal then $\overline{\Bd(\S(f^2))}^{Zar}\cap
\S(f)=\emptyset$. Since $\S(-f)$ is principal then $\overline{\Bd(\S(f^2))}^{Zar}\cap
\S(-f)=\emptyset$. Hence $\overline{\Bd(\S(f^2))}^{Zar}\cap
\S(f^2)=\emptyset$ and the proof is done (Proposition
\ref{complementary}).
\end{proof}

\subsection{Complexity of regulous principal semi-algebraic sets}

\begin{thm} \cite[Prop. and Def. 3.7 Ch. 1]{ABR}, \cite[Thm. 2.8]{MP1}\\
Let $X\subset\R^n$ be a real algebraic set of dimension $d$.  The
cokernel of the inclusion map $\A(X)\subset \F(X)$ has two primary
torsion and moreover $$2^d\F(X)\subset\A(X).$$
\end{thm}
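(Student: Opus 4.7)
The plan is to establish the inclusion $2^d\F(X)\subset\A(X)$, which immediately implies that $2^d$ annihilates the abelian group $\F(X)/\A(X)$ and hence that this cokernel is two-primary torsion. Since $\F(X)$ is generated as a $\Ze$-module by characteristic functions $\un_S$ of semi-algebraic subsets $S\subset X$, it suffices to prove $2^d\un_S\in \A(X)$ for every such $S$.

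The starting point is the identity, valid for any $f\in\Pol(X)$,
$$2\un_{\{f>0\}}=\sign(f^2)+\sign(f),\quad 2\un_{\{f<0\}}=\sign(f^2)-\sign(f),\quad \un_{\{f=0\}}=\un_X-\sign(f^2).$$
By Theorem~\ref{ParSza}, $\sign(f)$ and $\sign(f^2)$ both lie in $\A(X)$, so $\un_{\{f=0\}}\in\A(X)$ and $2\un_{\{f>0\}},\ 2\un_{\{f<0\}}\in\A(X)$.

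Next I would decompose $S$ as a finite disjoint union $S=\bigsqcup_i S_i$ in which each piece has the shape $S_i=V_i\cap\{f_{i,1}>0,\ldots,f_{i,k_i}>0\}$, with $V_i\subset X$ a real algebraic subset and $f_{i,j}\in\Pol(X)$. Such a decomposition is produced by stratifying $S$ by the signs of a family of defining polynomials and then replacing each stratum by the intersection of its Zariski closure $V_i$ with the open conditions; the equality conditions cost nothing by the third identity above. The decisive ingredient is Br\"ocker's theorem on the stability index of a real affine variety, which allows us to take $k_i\leq\dim V_i\leq d$. Writing $\un_{S_i}=\un_{V_i}\cdot\prod_{j=1}^{k_i}\un_{\{f_{i,j}>0\}}$ and noting that $\un_{V_i}\in\AS(X)\subset\A(X)$, multiplication by $2^{k_i}$ turns each factor $\un_{\{f_{i,j}>0\}}$ into an element of $\A(X)$, so $2^{k_i}\un_{S_i}\in\A(X)$, hence $2^d\un_{S_i}\in\A(X)$, and summing over $i$ yields $2^d\un_S\in\A(X)$.

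The main obstacle is precisely the Br\"ocker stability-index bound $k_i\leq\dim V_i$ invoked in the decomposition: this is the non-elementary input and is the reason the exponent is exactly the intrinsic dimension $d$ and not the (much larger) ambient dimension $n$. Once this bound is granted, the remainder of the argument is a routine bookkeeping of signs based on the identity above together with Theorem~\ref{ParSza}.
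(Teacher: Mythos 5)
Your argument is correct: the identities $2\un_{\{f>0\}}=\sign(f^2)+\sign(f)$ and $\un_{\{f=0\}}=\un_X-\sign(f^2)$, the sign-condition decomposition into pieces $V_i\cap\{f_{i,1}>0,\ldots,f_{i,k_i}>0\}$, and Br\"ocker's stability-index bound $k_i\leq\dim V_i\leq d$ (needed only for the open part, on each algebraic subset $V_i$) assemble exactly as you describe, since $\A(X)$ is a ring containing $\un_{V_i}$. The paper itself gives no proof, citing \cite{ABR} and \cite{MP1}, and your argument is precisely the standard one from those references; you have correctly isolated the one non-elementary ingredient (the stability index), and note that only the easy inclusion $\Lambda(\Witt(\Pol(X)))\subset\A(X)$ of Theorem~\ref{ParSza} is actually used, so no circularity arises.
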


From the previous theorem, we can introduce some invariants of
semi-algebraic sets (see \cite[Prop. and Def. 3.7 Ch. 1]{ABR} for the
original definitions).
\begin{defn}
Let $X\subset\R^n$ be a real algebraic set. Let $C$ be a non-empty
semi-algebraic subset of $X$.\\
$\bullet$ The minimal number $k>0$ such that $k\un_C\in\A(X)$ is a power of two,
say $k=2^{\w(C)}$.\\
$\bullet$ There exists a form $\rho$ over $\Pol(X)$ such
that $\Lambda(\rho)=2^{\w(C)}\un_C$. The form $\rho$ can always be chosen anisotropic
and then it is unique. We denote by $\rho(C)$ the corresponding
anisotropic form and by $\ell(C)$ the dimension of $\rho(C)$.\\
$\bullet$ The
number $\w(C)$ is called the width of $C$, the number $\ell(C)$ is
called the length of $C$ and the form $\rho(C)$ is called the defining
form of $C$.
\end{defn}

\begin{cor} \cite[Thm. 2.8]{MP1}\\
Let $X\subset\R^n$ be a real algebraic set of dimension $d$. Let $C$ be a non-empty
semi-algebraic subset of $X$. Then $$\w(C)\leq d.$$
\end{cor}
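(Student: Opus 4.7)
The plan is to extract the corollary as an essentially immediate consequence of the preceding theorem, with a small Euclidean-division argument to account for the ``power of two'' aspect of the definition of width. Since $C \subset X$ is non-empty and semi-algebraic, the characteristic function $\un_C$ lies in $\F(X)$, so the preceding theorem yields $2^d \un_C \in \A(X)$.

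Next, I would introduce the set
\[
I \,=\, \{k \in \N,\ k\geq 1 \,:\, k\un_C \in \A(X)\},
\]
and observe that it is non-empty (since $2^d \in I$) and stable under addition (since $\A(X)$ is an additive subgroup of $\F(X)$). Let $k_0 = \min I$. By the standard Euclidean-division argument, if $m \in I$ and $m = qk_0 + r$ with $0 \leq r < k_0$, then $r\un_C = m\un_C - q(k_0 \un_C) \in \A(X)$, so minimality of $k_0$ forces $r=0$. Hence $k_0$ divides every element of $I$; in particular $k_0 \mid 2^d$, which forces $k_0$ to be a power of two, say $k_0 = 2^{\w(C)}$ with $\w(C) \leq d$. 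This simultaneously confirms that the minimal $k \geq 1$ with $k\un_C \in \A(X)$ is genuinely a power of two (so that the definition of $\w(C)$ is well-posed) and delivers the required bound.

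There is no real obstacle here: all the content is in the preceding inclusion $2^d\F(X) \subset \A(X)$, which is the deep statement of McCrory--Parusi\'nski. Once that is granted, the corollary is a one-paragraph consequence combining the definition of $\w(C)$ with elementary divisibility in $\N$.
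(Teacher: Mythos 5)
Your proof is correct and matches the paper's (implicit) argument: the corollary is an immediate consequence of the preceding theorem's inclusion $2^d\F(X)\subset\A(X)$ together with the minimality in the definition of $\w(C)$. Your Euclidean-division step is a nice self-contained way to recover the fact that the minimal $k$ with $k\un_C\in\A(X)$ is a power of two, which the paper instead takes as granted from the two-primary torsion statement of the theorem.
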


The following proposition characterizes the widths of regulous closed
sets and regulous principal sets.
\begin{prop}
\label{width}
Let $X\subset\R^n$ be a real algebraic set. Let $0\not=f\in\SR^0(X)$. In
case the considered set is non-empty, we get:\\ 
$\bullet$ $\w(\Z(f))=0$,
$\w(X\setminus\Z(f))=0$, $\w(\S(f))\leq 1$ and $\w(\bar{\S}(f))\leq 1$.\\
$\bullet$ If $f$ is non-negative on $X$ then
$\w(\S(f))=\w(\bar{\S}(f))=0$.\\
$\bullet$ We have $\w(\S(f))=\w(\S(-f))$ in case
$\S(f)$ and $\S(-f)$ are both non-empty.\\
$\bullet$ We have $\w(\bar{\S}(f))=\w(\S(f))$ in case
$\bar{\S}(f)$ and $\S(f)$ are both non-empty.
\end{prop}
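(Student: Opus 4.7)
The plan is to obtain everything from Theorem~\ref{algconstregulu} (signs of regulous functions are algebraically constructible) together with a handful of elementary identities between characteristic functions. Throughout, I would use that by the definition of width, proving $\mathbf{1}_C \in \A(X)$ gives $\w(C)=0$, and proving $2\mathbf{1}_C \in \A(X)$ gives $\w(C)\leq 1$.

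First, for the four general upper bounds in bullet one, I would write down the following identities of functions on $X$ and observe that each right-hand side lies in $\A(X)$ by Theorem~\ref{algconstregulu}:
\begin{align*}
\mathbf{1}_{X\setminus\Z(f)} &= \Lambda(\langle f^2\rangle),\\
\mathbf{1}_{\Z(f)} &= \Lambda(\langle 1,-f^2\rangle),\\
2\,\mathbf{1}_{\S(f)} &= \Lambda(\langle f^2,f\rangle),\\
2\,\mathbf{1}_{\bar{\S}(f)} &= 2\,\mathbf{1}_{\S(f)}+2\,\mathbf{1}_{\Z(f)}.
\end{align*}
These give $\w(\Z(f))=\w(X\setminus\Z(f))=0$ and $\w(\S(f)),\w(\bar{\S}(f))\leq 1$ whenever the corresponding sets are non-empty.

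For the second bullet, if $f\geq 0$ on $X$ then $\sign(f)=\mathbf{1}_{\S(f)}$, so $\mathbf{1}_{\S(f)}=\Lambda(\langle f\rangle)\in\A(X)$ and hence $\w(\S(f))=0$. In this case $\bar{\S}(f)=X$, whose characteristic function is the sign of any positive constant, so $\w(\bar{\S}(f))=0$ as well.

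For the last two bullets, I would use that on the partitions $X\setminus \Z(f)=\S(f)\sqcup\S(-f)$ and $\bar{\S}(f)=\S(f)\sqcup \Z(f)$ one has
\[
\mathbf{1}_{\S(-f)}=\mathbf{1}_{X\setminus\Z(f)}-\mathbf{1}_{\S(f)},\qquad \mathbf{1}_{\bar{\S}(f)}=\mathbf{1}_{\S(f)}+\mathbf{1}_{\Z(f)},
\]
where the functions $\mathbf{1}_{X\setminus\Z(f)}$ and $\mathbf{1}_{\Z(f)}$ belong to $\A(X)$ by bullet one. Hence for any integer $k\geq 1$, $k\,\mathbf{1}_{\S(-f)}\in\A(X)\iff k\,\mathbf{1}_{\S(f)}\in\A(X)$ and similarly $k\,\mathbf{1}_{\bar{\S}(f)}\in\A(X)\iff k\,\mathbf{1}_{\S(f)}\in\A(X)$. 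Since the minimal such $k$ is a power of two in each case (by the torsion theorem recalled at the start of the section), the two minima coincide and we get $\w(\S(-f))=\w(\S(f))$ and $\w(\bar{\S}(f))=\w(\S(f))$.

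There is no real obstacle: the only non-trivial input is Theorem~\ref{algconstregulu}, which guarantees that $\sign(f)$ and $\sign(f^2)$ belong to $\A(X)$, and everything else is a direct manipulation with indicator functions. The only care to take is to check non-emptiness hypotheses so that the widths are actually defined, which is why the equalities $\w(\S(f))=\w(\S(-f))$ and $\w(\bar{\S}(f))=\w(\S(f))$ are stated only when the relevant sets are both non-empty.
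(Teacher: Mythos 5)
Your proof is correct and follows essentially the same route as the paper: the same explicit forms $\langle f^2\rangle$, $\langle 1,-f^2\rangle$, $\langle f^2,f\rangle$ realize the indicator functions, and the last two bullets reduce to the additive relations $\mathbf{1}_{\S(-f)}=\mathbf{1}_{X\setminus\Z(f)}-\mathbf{1}_{\S(f)}$ and $\mathbf{1}_{\bar{\S}(f)}=\mathbf{1}_{\S(f)}+\mathbf{1}_{\Z(f)}$. Your observation that $\{k:\,k\mathbf{1}_C\in\A(X)\}$ is a subgroup of $\Ze$ makes the final equivalences slightly cleaner than the paper's case analysis on $\w=0$ and $\Z(f)=\emptyset$, but it is the same underlying argument.
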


\begin{proof}
We have $\Lambda(<1>\perp \rho(-f^2))=\un_{\Z(f)}$,
$\Lambda(\rho(f^2))=\un_{X\setminus\Z(f)}$,
$\Lambda(\rho(f)\perp\rho(f^2))=2\un_{\S(f)}$ and $\Lambda(<1>\perp
\rho(f)\perp <1>\perp \rho(-f^2))=2\un_{\bar{\S}(f)}$.

If $f$ is non-negative on $X$ then $\Lambda(\rho(f))=\un_{\S(f)}$ and
$\Lambda(<1>)=\un_{\bar{\S}(f)}$.

Assume $\S(f)$ and $\S(-f)$ are both non-empty.  If $\w(\S(-f))=0$ then 
$\Lambda(<-1>\otimes \rho(\S(-f))\perp <1>\perp <-1>\otimes
\rho(\Z(f)))=\un_{\S(f)}$ if $\Z(f)\not=\emptyset$ and 
$\Lambda(<-1>\otimes \rho(\S(-f))\perp <1>)=\un_{\S(f)}$
if $\Z(f)=\emptyset$. It follows that $\w(\S(f))=0$.

Assume $\S(f)$ and $\bar{\S}(f)$ are both non-empty.  If $\w(\S(f))=0$ then 
$\Lambda(\rho(\S(f))\perp
\rho(\Z(f)))=\un_{\bar{\S}(f)}$ if $\Z(f)\not=\emptyset$ and 
$\Lambda(\rho(\S(f)))=\un_{\bar{\S}(f)}$
if $\Z(f)=\emptyset$. It follows that $\w(\bar{\S}(f))=0$.
If $\w(\bar{\S}(f))=0$ then 
$\Lambda(\rho(\bar{\S}(f))\perp <-1>\otimes
\rho(\Z(f)))=\un_{\S(f)}$ if $\Z(f)\not=\emptyset$ and 
$\Lambda(\rho(\bar{\S}(f)))=\un_{\S(f)}$
if $\Z(f)=\emptyset$. It follows that $\w(\S(f))=0$ and the proof is done.
\end{proof}

We compare the lengths of regulous closed
sets and regulous principal sets and the lengths of the signs of
regulous functions.
\begin{prop}
\label{lengthsa}
Let $X\subset\R^n$ be a real algebraic set. Let $0\not=f\in\SR^0(X)$. In
case the considered set is non-empty, we get:\\
$\bullet$ $\ell(\Z(f))\leq 1+\ell(f^2)\leq 1+\ell(f)^2$ and
$\rho(\Z(f))$ is the anisotropic form similar to $<1>\perp
\rho(-f^2)$.\\
$\bullet$ $\ell(X\setminus\Z(f))=\ell(f^2)$ and
$\rho(X\setminus\Z(f))=\rho(f^2)$.\\
$\bullet$ If $f$ is non-negative on $X$ then $\ell(\S(f))=\ell(f)$ and 
$\rho(\S(f))=\rho(f)$.\\
$\bullet$ If $\w(\S(f))=1$ then $\ell(\S(f))\leq \ell(f)+\ell(f^2)\leq
\ell(f)(1+\ell(f))$ and $\rho(\S(f))$ is the anisotropic form similar
to $\rho(f)\perp\rho(f^2)$.\\
$\bullet$ If $f$ is non-negative on $X$ then $\ell(\bar{\S}(f))=1$ and 
$\rho(\S(f))=<1>$.\\
$\bullet$ If $\w(\bar{\S}(f))=1$ then $\ell(\bar{\S}(f))\leq 2+\ell(f)+\ell(f^2)\leq
2+\ell(f)(1+\ell(f))$ and $\rho(\bar{\S}(f))$ is the anisotropic form similar
to $<1,1>\perp \rho(f)\perp\rho(-f^2)$.\\
$\bullet$ If $\S(f)$ and $\S(-f)$ are both non-empty and if
$\w(\S(f))=0$ then $\ell(f)\leq \ell(\S(f))+\ell(\S(-f))$ and $\rho(f)$ is the
anisotropic form similar to $\rho(\S(f))\perp <-1>\otimes
\rho(\S(-f))$.\\
$\bullet$ If $\S(f)$ and $\S(-f)$ are both non-empty and if
$\w(\S(f))=1$ and $\Z(f)\not=\emptyset$ then\\ $\ell(f)\leq \inf\{\ell(\S(f)),\ell(\S(-f))\}+\ell(\Z(f))+1$ and $\rho(f)$ is the
anisotropic form similar to $\rho(\S(f))\perp <-1>\perp
\rho(\Z(f))$ and $<-1>\otimes \rho(\S(-f))\perp <1>\perp
<-1>\otimes\rho(\Z(f))$.\\
$\bullet$ If $\S(f)$ and $\S(-f)$ are both non-empty and if
$\w(\S(f))=1$ and $\Z(f)=\emptyset$ then\\ $\ell(f)\leq \inf\{\ell(\S(f)),\ell(\S(-f))\}+1$ and $\rho(f)$ is the
anisotropic form similar to $\rho(\S(f))\perp <-1>$ and $<-1>\otimes \rho(\S(-f))\perp <1>$.
\end{prop}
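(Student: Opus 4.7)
The plan is to reduce every statement to an explicit computation of $\Lambda$ on a form of the claimed dimension, and then invoke injectivity of $\Lambda$ on similarity classes (equivalently, uniqueness of anisotropic representatives) to pass from ``a form whose signature equals $k\un_C$'' to ``the stated form is, up to similarity, the defining form $\rho(C)$''. All computations are carried out on the three-piece partition $X=\S(f)\coprod\Z(f)\coprod\S(-f)$, on which $\sign(f)$, $\sign(f^2)$ and $\sign(-f^2)$ take known values in $\{-1,0,1\}$.

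Before starting the case analysis I would record two elementary facts. First, $\rho(-f^2)=<-1>\otimes\rho(f^2)$ and hence $\ell(-f^2)=\ell(f^2)$: negating every entry of an anisotropic form preserves anisotropy and flips the signature, so the minimal representatives correspond. Second, $\Lambda(\rho(f)\otimes\rho(f))=\sign(f)^2=\sign(f^2)$ produces the bound $\ell(f^2)\le \ell(f)^2$. These two observations absorb all the ``$\le \ell(f)(1+\ell(f))$'' type estimates once the basic identities are in place.

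The remaining work is a bullet-by-bullet check. For $\Z(f)$, the form $<1>\perp\rho(-f^2)$ has signature $\un_X-\un_{X\setminus\Z(f)}=\un_{\Z(f)}$, and since $\w(\Z(f))=0$ by Proposition \ref{width}, this form is similar to $\rho(\Z(f))$. For $X\setminus\Z(f)$, $\Lambda(\rho(f^2))=\un_{X\setminus\Z(f)}$ identifies $\rho(X\setminus\Z(f))=\rho(f^2)$ directly. For $\S(f)$ when $f\ge0$ it is immediate; when $\w(\S(f))=1$ we use $\sign(f)+\sign(f^2)=2\un_{\S(f)}$, verified on each piece. For $\bar\S(f)$ with $\w=1$ we check that $<1,1>\perp\rho(f)\perp\rho(-f^2)$ contributes $2+1-1=2$ on $\S(f)$, $2-1-1=0$ on $\S(-f)$, and $2+0-0=2$ on $\Z(f)$, i.e.\ equals $2\un_{\bar\S(f)}$. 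The last three bullets (recovering $\rho(f)$ from $\rho(\S(\pm f))$ and $\rho(\Z(f))$) are analogous: in width zero, $\sign(f)=\un_{\S(f)}-\un_{\S(-f)}$ is realised by $\rho(\S(f))\perp<-1>\otimes\rho(\S(-f))$; in width one we correct by adding $<-1>\perp\rho(\Z(f))$ (starting from $\rho(\S(f))$) or $<1>\perp<-1>\otimes\rho(\Z(f))$ (starting from $\rho(\S(-f))$), checked piecewise; the $\inf$ arises by taking the smaller of the two symmetric bounds, and the subcase $\Z(f)=\emptyset$ follows by dropping the $\rho(\Z(f))$ summand.

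There is no genuine obstacle here: each claim is an elementary additive identity in $\F(X)$ lifted through the injective map $\Lambda$. The only care required is the systematic bookkeeping on the partition and the separation into the cases $\w=0$ versus $\w=1$ and $\Z(f)=\emptyset$ versus $\Z(f)\neq\emptyset$, which is precisely what dictates whether an extra $<1>$ or $\rho(\Z(f))$ summand must appear in the representing form.
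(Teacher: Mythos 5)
Your proof is correct and follows essentially the same route as the paper: one writes down, for each bullet, an explicit form whose signature (checked on the partition $X=\S(f)\amalg\Z(f)\amalg\S(-f)$) equals the required multiple of the characteristic function, and then invokes the uniqueness of the anisotropic representative together with the width computations of Proposition \ref{width}. The paper's own proof only spells out the last three bullets and refers back to the formulas in the proof of Proposition \ref{width} for the rest; your bookkeeping fills in exactly those same identities.
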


\begin{proof}
Note that trivially $\ell(f)=\ell(-f)$ and $\ell(f^2)\leq \ell(f)^2$
since $\Lambda(\rho(f)\otimes\rho(f))=\Lambda(\rho(f^2))$ on $X$. 
We give the proof of the last three statements. Assume $\S(f)$ and
$\S(-f)$ are both non-empty. By Proposition \ref{width} we know that $\w(\S(f))=\w(\S(-f))$.
If $\w(\S(f))=0$ then verify that
$\Lambda(\rho(\S(f))\perp <-1>\otimes
\rho(\S(-f)))=\Lambda (<f>)$ on $X$.
If $\w(\S(f))=1$ and $\Z(f)\not=\emptyset$ then verify that $\Lambda(\rho(\S(f))\perp <-1>\perp
\rho(\Z(f)))=\Lambda(<-1>\otimes \rho(\S(-f))\perp <1>\perp
<-1>\otimes\rho(\Z(f))))=\Lambda (<f>)$ on $X$. If $\w(\S(f))=1$ and
$\Z(f)=\emptyset$ then we can remove the form $\rho(\Z(f))$ from
the above formulas.
The rest of the proof follows essentially from the
arguments given in the proof of Proposition \ref{width}.
\end{proof}

\begin{rem} 
The reader may find more general upper bounds for the length of
semi-algebraic sets in \cite[Thm. 2.5, Ch. 5]{ABR}. Note that the result
given in \cite[Rem. 2.6, Ch. 5]{ABR} seems to be incorrect. Consider
the set $X=\{(0,0)\}\sqcup F$ of Example \ref{exemple1} and let
$f=x$ restricted to $X$. We have $\Z(f)=\{(0,0)\}$. We get
$\w(\Z(f))=0$ and $\ell(\Z(f))\leq 2$ since $\Lambda(<1,-x^2>)=\un_{\{0,0\}}$ (or use
Proposition \ref{width}). Since $\w(\Z(f))=0$, in \cite[Rem. 2.6,
Ch. 5]{ABR} they predict that $\ell(\Z(f))=1$ i.e there exists a
polynomial function that does not vanish at the origin and vanishing
identically on $F$, impossible. In this example,
$\ell(\Z(f))=2=1+\ell(f^2)$ (the bound given in the first statement of
Proposition \ref{lengthsa} is the best possible in this case).
\end{rem}

We may improve the result of Propositions \ref{width} and \ref{lengthsa} if we assume
that the regulous function changes of signs sufficiently.
\begin{prop}
\label{bonnedim}
Let $X\subset\R^n$ be an irreducible real algebraic set. Let
$f\in\SR^0(X)$ be such that $\dim\S(f)=\dim\S(-f)=\dim X$.
Then $\w(\S(f))=\w(\S(-f))=1$, $\ell(\Z(f))\geq 2$, $\ell(\S(f))\geq
2$ and $\ell(\S(-f))\geq 2$.
\end{prop}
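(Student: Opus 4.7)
The plan is to combine the description $\A(X)=\Lambda(\Witt(\Pol(X)))$ from Theorem \ref{ParSza} with a parity-mod-$2$ argument available on an irreducible real algebraic set, to rule out $\un_{\S(f)}\in\A(X)$; the length lower bounds then follow by elementary sign bookkeeping.

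First I would prove $\w(\S(f))=1$. Proposition \ref{width} already gives $\w(\S(f))\leq 1$, so it suffices to show $\un_{\S(f)}\notin\A(X)$. Suppose for contradiction that $\un_{\S(f)}\in\A(X)$; by Theorem \ref{ParSza} there exist $p_1,\ldots,p_k\in\Pol(X)$ with $\sum_{i=1}^k\sign(p_i)=\un_{\S(f)}$ on $X$. Set $U=X\setminus\bigcup_{i=1}^k\Z(p_i)$; since $X$ is irreducible, every $p_i$ that is not identically zero has $\dim\Z(p_i)<\dim X$, so $U$ is a Zariski-dense open subset of $X$. On $U$ each $\sign(p_i)$ equals $\pm 1$, so $\sum_i\sign(p_i)(x)\equiv k\pmod 2$ for every $x\in U$, forcing $\un_{\S(f)}$ to be constant mod $2$ on $U$. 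However $\dim\S(f)=\dim\S(-f)=\dim X$, so neither $\S(f)$ nor $\S(-f)$ can be contained in the lower-dimensional set $X\setminus U$; hence $U\cap\S(f)\neq\emptyset$ and $U\cap\S(-f)\neq\emptyset$, so $\un_{\S(f)}$ takes both values $0$ and $1$ on $U$, a contradiction. The same argument, with $-f$ in place of $f$, gives $\w(\S(-f))=1$.

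Next, with $\w(\S(f))=1$ in hand, by definition $\Lambda(\rho(\S(f)))=2\un_{\S(f)}$, which attains the value $2$; a one-dimensional form $\langle p\rangle$ has $\Lambda(\langle p\rangle)=\sign(p)\in\{-1,0,1\}$ and therefore cannot equal $2\un_{\S(f)}$, so $\ell(\S(f))\geq 2$; the same for $\ell(\S(-f))\geq 2$. For $\ell(\Z(f))\geq 2$ (under the tacit non-emptiness hypothesis on $\Z(f)$), Proposition \ref{width} gives $\w(\Z(f))=0$, so $\Lambda(\rho(\Z(f)))=\un_{\Z(f)}$. If $\ell(\Z(f))=1$ then $\un_{\Z(f)}=\sign(p)$ for some $p\in\Pol(X)$; since $\un_{\Z(f)}\in\{0,1\}$ we would have $p\geq 0$ on $X$ and $\Z(p)=X\setminus\Z(f)=\S(f)\cup\S(-f)$. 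But $\S(f)\cup\S(-f)$ has dimension $d=\dim X$, so by irreducibility its Zariski closure is $X$, whence $\Z(p)=X$, i.e.\ $p\equiv 0$ and $\sign(p)\equiv 0$, contradicting $\Z(f)\neq\emptyset$.

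The main obstacle is the first step: translating the abstract hypothesis ``$\un_{\S(f)}\in\A(X)$'' into the concrete parity-mod-$2$ statement on a Zariski-dense open subset, and then exploiting the full-dimensionality of \emph{both} $\S(f)$ and $\S(-f)$ together with irreducibility of $X$ to break that parity. Once this parity argument is carried out, the remaining inequalities on lengths are essentially formal consequences of the definitions of $\rho(\cdot)$, $\w(\cdot)$, $\ell(\cdot)$ and of Proposition \ref{width}.
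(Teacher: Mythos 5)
Your proof is correct and follows essentially the same route as the paper's: a parity (mod $2$) argument exploiting the irreducibility of $X$ and the Zariski-density of both $\S(f)$ and $\S(-f)$ to force $\w(\S(f))=1$, followed by the same elementary bookkeeping for the length bounds. The only cosmetic difference is that the paper evaluates the anisotropic form $\rho(\S(f))$ at well-chosen points of $\S(f)$ and of $\S(-f)$ rather than passing to a dense Zariski open set $U$; just discard any identically zero $p_i$ before forming $U$ (otherwise $U$ could be empty), which does not change the sum of signs.
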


\begin{proof}
Assume $\w (\S(f))=0$ and $\rho(\S(f))$ is the similarity class of the
anisotropic form $<p_1,\ldots,p_k>$, $p_1,\ldots,p_k\in\Pol(X)$.
We claim there exists $x\in\S(f)$ such that $p_i(x)\not= 0$ for
$i=1,\ldots, k$. Otherwise $\prod_{i=1}^k p_i$ vanishes identically on
$\S(f)$ and thus also on $X$ since by assumption
$\overline{\S(f)}^{Zar}=X$. Since $\Pol(X)$ is an integral domain then
it follows that $<p_1,\ldots,p_k>$ is isotropic, a contradiction. Since
$\sum_{i=1}^k\sign(p_i)(x)=1$, it follows that $k$ is odd. By the
above arguments, there exists $y\in\S(-f)$ such that $p_i(y)\not= 0$ for
$i=1,\ldots, k$ and it follows that $k$ is even. Using Proposition
\ref{width} we conclude that $\w(\S(f))=1$. Hence we get $\ell(\S(f))\geq
2$. Changing $f$ by $-f$ in the above arguments or using Proposition
\ref{width} we get $\w(\S(-f))=1$
and $\ell(\S(-f))\geq 2$.
Assume now that $\ell(\Z(f))=1$. There exists a non-zero $p\in\Pol(X)$
such that $\Lambda(<p>)=1$ on $\Z(f)$ and $\Lambda(<p>)=0$ on
$\S(f)\cup\S(-f)$. It impossible because $X$ is irreducible.
\end{proof}

\begin{prop}
Let $X\subset\R^n$ be a real algebraic set. Let $0\not=f\in\SR^0(X)$.
The following properties are equivalent:\\
a) $\length(f)=1$.\\
b) $\Z(f)$ is Zariski closed.\\
c) $\ell(X\setminus\Z(f))=1$.
\end{prop}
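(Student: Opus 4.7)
The plan is to observe that this is essentially a repackaging of previously established results and requires little new work. The equivalence of a) and b) is exactly the content of Theorem \ref{Z-closed2}, so the only work is to prove the equivalence of b) and c).

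The key preliminary observation is Proposition \ref{width}, which tells us that $\w(X\setminus\Z(f))=0$ whenever $f\neq 0$. Consequently, $\ell(X\setminus\Z(f))=1$ is equivalent to the existence of a polynomial $p\in\Pol(X)$ such that $<p>$ is anisotropic and $\Lambda(<p>)=\un_{X\setminus\Z(f)}$ on $X$.

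For the direction b) $\Rightarrow$ c), I would start from $\Z(f)$ Zariski closed and choose $p\in\Pol(X)$ with $\Z(p)=\Z(f)$. Replacing $p$ by $p^2$ if necessary, we obtain a polynomial that is non-negative on $X$, vanishes exactly on $\Z(f)$, and is not identically zero (since $f\neq 0$ forces $\Z(f)\subsetneq X$). Then $\sign(p^2)=\un_{X\setminus\Z(f)}$ and $<p^2>$ is anisotropic, giving $\ell(X\setminus\Z(f))\leq 1$; equality holds because $X\setminus\Z(f)\neq\emptyset$.

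For the direction c) $\Rightarrow$ b), I would unpack the definition: by the observation above, there exists $p\in\Pol(X)$ with $\Lambda(<p>)=\un_{X\setminus\Z(f)}$. This equality forces $\sign(p)(x)=1$ for $x\notin\Z(f)$ and $\sign(p)(x)=0$ for $x\in\Z(f)$, so $\Z(p)=\Z(f)$ and $\Z(f)$ is Zariski closed. There is no real obstacle here; the statement is a direct consequence of Proposition \ref{width} combined with Theorem \ref{Z-closed2}.
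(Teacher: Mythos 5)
Your proposal is correct and follows essentially the same route as the paper: a)$\Leftrightarrow$b) via Theorem \ref{Z-closed2}, producing a square of a polynomial with zero set $\Z(f)$ to get $\ell(X\setminus\Z(f))=1$, and extracting a single polynomial $p$ with $\Lambda(<p>)=\un_{X\setminus\Z(f)}$ (using that the width is $0$) to conclude $\Z(f)=\Z(p)$ is Zariski closed. The only cosmetic difference is that you close the cycle via b)$\Rightarrow$c) where the paper argues a)$\Rightarrow$c), and you cite Proposition \ref{width} for the width-zero claim where the paper deduces it directly from $\ell=1$; both are valid.
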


\begin{proof}
Equivalence between a) and b) is Theorem
\ref{Z-closed2}. 
Assume $\length(f)=1$. There exists $p\in\Pol(X)$ such that
$\Lambda(<p>)=\Lambda (<f>)$ on $X$. Thus
$\Lambda(<p^2>)=\un_{X\setminus\Z(f)}$ and so
$\ell(X\setminus\Z(f))=1$.
Assume $\ell(X\setminus\Z(f))=1$. Then clearly $\w(X\setminus\Z(f))=0$
and thus there exists $p\in\Pol(X)$ such that
$\Lambda(<p>)=\un_{X\setminus\Z(f)}$. Hence $\Z(f)=\Z(p)$ i.e $\Z(f)$
is Zariski closed.
\end{proof}

\begin{prop}
\label{longueurinfà2}
Let $X\subset\R^n$ be a real algebraic set. Let $f\in\SR^0(X)$.
Then $\S(f)$ is principal if $\ell(\S(f))\leq 2$.
\end{prop}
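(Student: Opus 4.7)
The plan is to split into cases according to the width $\w(\S(f))$ and length $\ell(\S(f))$. Writing $S = \S(f)$ for brevity, recall from Proposition \ref{width} that $\w(S) \leq 1$, and combined with the hypothesis $\ell(S) \leq 2$, the defining form $\rho(S)$ has dimension at most $2$, yielding an identity $\sign(p_1) + \cdots + \sign(p_k) = 2^{\w(S)} \un_S$ on $X$ for some $p_1, \ldots, p_k \in \Pol(X)$ with $k \leq 2$. In each case I intend either to exhibit $S$ as $\S(p)$ for an explicit $p \in \Pol(X)$, or to verify the topological criterion $S \cap \overline{\Bd(S)}^{Zar} = \emptyset$ of Theorem \ref{principal2}.

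The easy cases can be dispatched quickly. If $\ell(S) = 1$ and $\w(S) = 0$, then $\sign(p_1) = \un_S$, so $S = \S(p_1)$ directly. If $\ell(S) = 1$ and $\w(S) = 1$, the equation $\sign(p_1) = 2\un_S$ is impossible on non-empty $S$, so this case does not arise. If $\ell(S) = 2$ and $\w(S) = 1$, then $\sign(p_1) + \sign(p_2) = 2\un_S$, so the sum is identically $0$ or $2$; since the values $\pm 1$ and $-2$ are forbidden everywhere, $S$ consists exactly of the points where both $p_i > 0$, while on the complement the admissible sign patterns are $(0,0)$ and $(\pm 1, \mp 1)$, all yielding $p_1 p_2 \leq 0$. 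Hence $S = \S(p_1 p_2)$.

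The main case is $\w(S) = 0$ and $\ell(S) = 2$, where $\sign(p_1) + \sign(p_2) = \un_S$ forces the sign pattern on $S$ to be $(1,0)$ or $(0,1)$, and off $S$ to be $(0,0)$, $(1,-1)$, or $(-1,1)$. Here I will invoke Theorem \ref{principal2}. The crux, which I expect to be the main (though routine) step, is showing that $\Bd(S) \subset \Z(p_1) \cap \Z(p_2)$. Indeed, pick $x \in \Bd(S)$, so $x \notin S$ but $x \in \overline{S}^{eucl}$; were $p_1(x) > 0$ and $p_2(x) < 0$ (or vice versa), continuity would give a euclidean neighborhood $U$ of $x$ on which that sign pattern persists, hence on which $\sign(p_1) + \sign(p_2) \equiv 0$ and therefore $U \cap S = \emptyset$, contradicting $x \in \overline{S}^{eucl}$. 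Thus $\overline{\Bd(S)}^{Zar} \subset \Z(p_1) \cap \Z(p_2)$, and since on $S$ at least one of $p_1, p_2$ is strictly positive, $S \cap \overline{\Bd(S)}^{Zar} = \emptyset$. Theorem \ref{principal2} then concludes that $S$ is principal, completing the proof.
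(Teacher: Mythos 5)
Your proof is correct and follows essentially the same case analysis on the pairs $(\ell(\S(f)),\w(\S(f)))$ as the paper, invoking the criterion $S\cap\overline{\Bd(S)}^{Zar}=\emptyset$ of Theorem \ref{principal2} in the crucial case $\ell=2$, $\w=0$ via the same boundary-sign argument. The only (harmless) divergence is in the case $\ell=2$, $\w=1$, where you exhibit $S=\S(p_1p_2)$ explicitly, whereas the paper also routes that case through Theorem \ref{principal2}.
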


\begin{proof}
We assume $\S(f)$ is non-empty and $\ell(\S(f))\leq 2$. By Proposition
\ref{width} we have $\w(\S(f))\leq 1$. We study all the
possible couples $(\ell(\S(f)),\w(\S(f)))$.\\
$\bullet$ Assume $\ell(\S(f))=2$ and $\w(\S(f))=1$. There exist
$p,q\in\Pol(X)$ such that $\Lambda(<p,q>)=2\un_{\S(f)}$ and $<p,q>$ is
anisotropic. We clearly have $\S(f)\subset\S(p)$ and $\S(f)\subset\S(q)$.
We claim that $\Bd(\S(f))\subset \Z(pq)$: Otherwise we
may assume there exists $x\in\Bd(\S(f))$ such that $p(x)<0$ and
$q(x)>0$. Thus there exists $y\in\S(f)$ such that $p(y)<0$,
impossible.
Hence $\overline{\Bd(\S(f))}^{Zar}\subset \Z(pq)$. Since $\S(f)\subset
\S(p,q)$ then it follows that
$\S(f)\cap\overline{\Bd(\S(f))}^{Zar}=\emptyset$. By Theorem
\ref{principal2}, we conclude that $\S(f)$ is principal.\\
$\bullet$ The case $\ell(\S(f))=1$ and $\w(\S(f))=1$ is clearly
impossible.\\
$\bullet$ Assume $\ell(\S(f))=1$ and $\w(\S(f))=0$. There exists $p\in\Pol(X)$ such that
$\Lambda(<p>)=\un_{\S(f)}$ and thus $\S(f)=\S(p)$.\\
$\bullet$ Assume $\ell(\S(f))=2$ and $\w(\S(f))=0$. There exist
$p,q\in\Pol(X)$ such that $\Lambda(<p,q>)=\un_{\S(f)}$ and $<p,q>$ is
anisotropic. We clearly have $\S(f)\subset\bar{\S}(p)$ and
$\S(f)\subset\bar{\S}(q)$. Thus $\overline{\S(f)}^{eucl}\subset
\bar{\S}(p,q)$ and it follows that $\Bd(\S(f))\subset
\bar{\S}(p,q)$. Since $\Lambda (<p,q>)=0$ on $\Bd(\S(f))$ then 
we get $\Bd(\S(f))\subset \overline{\Bd(\S(f))}^{Zar}\subset
\Z(p)\cap\Z(q)$. Looking at the signature of the anisotropic form
$<p,q>$, it follows that
$\S(f)\cap\overline{\Bd(\S(f))}^{Zar}=\emptyset$. By Theorem
\ref{principal2}, the proof is done.
\end{proof}

\begin{thm}
Let $X\subset\R^n$ be a central and irreducible real algebraic set.
Let $f\in\SR^0(X)$.
Then $\S(f)$ is principal if and only if $\ell(\S(f))\leq 2$.
\end{thm}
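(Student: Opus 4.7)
The \emph{if} direction is already established by Proposition~\ref{longueurinfà2}, which does not require $X$ central or irreducible. So all the work is in the converse. The plan is to write $\S(f) = \S(p)$ with $p\in\Pol(X)$ and to exhibit a small anisotropic form with signature $2^{\w(\S(f))}\un_{\S(f)}$.

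The key formal computation is
\[
\Lambda(<p,p^2>) \;=\; \sign(p) + \sign(p^2) \;=\; 2\un_{\S(p)} \;=\; 2\un_{\S(f)},
\]
and $<p,p^2>$ is anisotropic as soon as $\S(f)\neq\emptyset$: any form of dimension $\leq 1$ has signature in $\{-1,0,1\}$, whereas $2\un_{\S(f)}$ attains the value $2$, so no smaller form is similar to $<p,p^2>$. Consequently, once I establish $\w(\S(f))=1$, uniqueness of the anisotropic representative forces $\rho(\S(f))\sim <p,p^2>$, whence $\ell(\S(f))=2$.

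Two cases then arise. If $f\ge 0$ on $X$ (equivalently $\S(-f)=\emptyset$), Corollary~\ref{corcomplementary} applies directly — the hypotheses $X$ central and irreducible, $\S(f)$ principal, $f$ nonnegative are exactly those in hand — giving $\length(f)=1$, and then Proposition~\ref{lengthsa} yields $\ell(\S(f))=\ell(f)=1\leq 2$. Otherwise both $\S(f)$ and $\S(-f)$ are non-empty open semi-algebraic subsets of $X$, and I want to invoke Proposition~\ref{bonnedim}, which requires $\dim\S(f)=\dim\S(-f)=\dim X$. Here both hypotheses on $X$ are used: centrality makes $X_{reg}$ euclidean-dense in $X$, and combined with irreducibility every point of $X$ has maximal local dimension, so $X_{reg}$ is a smooth manifold of dimension $\dim X$; any non-empty open semi-algebraic subset of $X$ meets $X_{reg}$ and therefore has dimension $\dim X$. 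Proposition~\ref{bonnedim} then supplies $\w(\S(f))=1$, and the form $<p,p^2>$ finishes the proof.

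The main obstacle is precisely this dimension check in the mixed-sign case — it is the one place where both the central and the irreducible assumptions are genuinely used. Once it is in place, the identity $\Lambda(<p,p^2>)=2\un_{\S(f)}$ together with the anisotropy argument concludes almost automatically; the nonnegative case is handled separately by a direct appeal to Corollary~\ref{corcomplementary}.
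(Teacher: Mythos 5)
Your proposal is correct and follows essentially the same route as the paper: Proposition~\ref{longueurinfà2} for the ``if'' direction, then for the converse a split into the nonnegative case (handled by Corollary~\ref{corcomplementary}) and the mixed-sign case, where centrality and irreducibility give $\dim\S(f)=\dim\S(-f)=\dim X$, Proposition~\ref{bonnedim} gives $\w(\S(f))=1$, and the form $<p,p^2>$ with $\Lambda(<p,p^2>)=2\un_{\S(f)}$ concludes. Your extra justifications of the dimension check and of the anisotropy of $<p,p^2>$ are details the paper leaves implicit, but the argument is the same.
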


\begin{proof}
Proposition \ref{longueurinfà2} gives one implication. 
One proves now the
converse implication. Assume $\S(f)\not=\emptyset$ and there exists $p\in\Pol(X)$ such that
$\S(f)=\S(p)$. If $f$ is non-negative on $X$ then
$\ell(f)=\ell(\S(f))=1$ by Corollary \ref{corcomplementary}. So we can
assume $\S(-f)\not=\emptyset$. Since $X$ is irreducible and central,
it follows that $\dim\S(f)=\dim\S(-f)=\dim X$. By Proposition
\ref{bonnedim}, we get $\w(\S(f))=1$. Since
$\Lambda(<p,p^2>)=2\un_{\S(f)}$ then the proof is done.
\end{proof}

\begin{rem}
The author cautions the reader that \cite[Cor. 2.2, Ch. 5]{ABR} seems
to be incorrect. Indeed, the width of a principal semi-algebraic set is not
always equal to one: $\w(\S(p))=0$ when $p$ is a non-negative
polynomial function on a real algebraic set.
\end{rem}

\section{Signs of regulous functions}

Throughout this section $X$ will denote a real algebraic subset of
dimension $d$ of $\R^n$. The goal of this
section is to characterize the signs of continuous semi-algebraic
functions that coincide with the signs of regulous functions. We deal
more particularly with the case where $X$ is central and 
the semi-algebraic functions are rational continuous.

The following statement is a regulous generalization of Lemma
\ref{hl1}.
\begin{lem}
\label{hl4}
Let $S$ be a closed semi-algebraic subset of $X$. Let $f,g\in\SR^0
(X)$. There exist $p\in\Pol (X)$ and $h\in\SR^0(X)$ such that $p>0$ on $X$, $h\geq 0$
on $X$, $\Lambda(<pf+hg>)=\Lambda(<f>)$ on $S$ and $\Z
(h)=\overline{\Z(f)\cap S}^{\Co}$.
\end{lem}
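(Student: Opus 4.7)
The plan is to mimic the proof of Lemma \ref{hl1} (\cite[Lem. 7.7.10]{BCR1}), substituting the Zariski closure with the $\Co$-closure and the polynomial $q$ with a regulous $h$. The one genuinely new ingredient is the construction of $h$; the rest of the argument is a regulous transcription of the Łojasiewicz--domination step used in the proof of Lemma \ref{hl3}.

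First I would pick $h_0\in\SR^0(X)$ with $\Z(h_0)=\overline{\Z(f)\cap S}^{\Co}$, which exists because every regulous closed subset of $X$ is the zero set of a regulous function on $X$ (by the results of \cite{FHMM} already invoked throughout the paper, e.g.\ in Proposition \ref{fortalgconst}). Setting $h:=h_0^{2N}$ for a positive integer $N$ to be determined, one immediately has $h\geq 0$ on $X$ and $\Z(h)=\Z(h_0)=\overline{\Z(f)\cap S}^{\Co}$, so the requirements on the zero set of $h$ are met no matter how $N$ is chosen.

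Next, since $\Z(f)\cap S\subseteq\Z(h_0)$, the function $h_0$ vanishes on $\{x\in S\mid f(x)=0\}$. The Łojasiewicz inequality \cite[Thm. 2.6.6]{BCR} applied on the closed semi-algebraic set $S$ exactly as in the proof of Lemma \ref{hl3} then shows that, for $N$ sufficiently large, the rational function $\psi:=h_0^{2N}g/f$, initially defined on $S\setminus\Z(f)$, extends to a continuous semi-algebraic function on $S$ with $\psi\equiv 0$ on $\Z(f)\cap S$. Because $S$ is closed and semi-algebraic, \cite[Thm. 2.6.2]{BCR} provides a polynomial $p\in\Pol(\R^n)$ which is strictly positive on $\R^n$ and satisfies $p>|\psi|$ on $S$. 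Restricting $p$ to $X$ yields the desired polynomial.

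Finally I would verify $\Lambda(\langle pf+hg\rangle)=\Lambda(\langle f\rangle)$ on $S$. On $\Z(f)\cap S$ we have $f=0$ and $h=0$, so $pf+hg=0=f$. On $S\setminus\Z(f)$ the bound $p>|hg/f|$ gives $|pf|>|hg|$, so $\sign(pf+hg)=\sign(pf)=\sign(f)$ because $p>0$. The only real obstacle is the Łojasiewicz step: confirming that $N$ can be chosen so that $h_0^{2N}g/f$ admits a continuous extension to the (noncompact) set $S$ when $f,g,h_0$ are merely regulous; but this is handled exactly as in the proof of Lemma \ref{hl3}, by viewing the functions as continuous semi-algebraic on $S$ and raising $h_0$ to a sufficiently high even power to dominate $f$ in the sense of \cite[Thm. 2.6.6]{BCR}.
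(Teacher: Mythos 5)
Your proposal is correct and follows essentially the same route as the paper: choose a regulous function whose zero set is $\overline{\Z(f)\cap S}^{\Co}$, raise it to a high even power so that, by \L ojasiewicz (\cite[Thm. 2.6.6]{BCR}), the quotient $h g/f$ extends continuously by $0$ across $\Z(f)\cap S$, and then bound it by a positive definite polynomial via \cite[Thm. 2.6.2]{BCR}. The paper's own proof is just a terser version of this argument (it lifts everything to $\R^n$ first, as in Lemma \ref{hl3}), so there is nothing to add.
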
 

\begin{proof}
As in the proof of Lemma \ref{hl3}, we may assume $S$ is a closed
semi-algebraic subset of $\R^n$ and $f,g\in\SR^0(\R^n)$.
Take $h\in\SR^0(\R^n)$ such that $\Z
(h)=\overline{\Z(f)\cap S}^{\Co}$. By
\cite[Thm. 2.6.6]{BCR}, for a sufficiently big positive even integer
$N$ the function $h^N\dfrac{g}{f}$ extended by $0$ on $\Z(f)$ is semi-algebraic and continuous on
$S$. The end of the proof is the same as that of Lemma \ref{hl3}.
\end{proof}

The following theorem is a regulous generalization of Theorem \ref{signsa}.
\begin{thm}
\label{signprincipaux}
Let $f$ be a continuous semi-algebraic function on $X$ satisfying the following
$3$ conditions:\\
$\bullet$ there exists $g_1\in\SR^0(X)$ such that $\S(f)=\S(g_1)$,\\
$\bullet$ there exists $g_2\in\SR^0(X)$ such that $\S(-f)=\S(-g_2)$,\\
$\bullet$ there exists $g_3\in\SR^0(X)$ such that $\Z(f)=\Z(g_3)$.\\
Then there exists $g\in\SR^0(X)$ such
that $\Lambda(<f>)=\Lambda(<g>)$ on $X$.
\end{thm}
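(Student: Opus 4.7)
The plan is to imitate the proof of Theorem \ref{signsa} line by line, replacing the use of Lemma \ref{hl1} (the polynomial \L ojasiewicz-style gluing lemma) by its regulous counterpart Lemma \ref{hl4}. The hypotheses of the present theorem are the natural regulous versions of the three conditions appearing in Theorem \ref{signsa}, so once the regulous gluing lemma is available, the structure of the argument should carry over verbatim.

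First, I would absorb $g_3$ into $g_1$: replace $g_1$ by $g_1 g_3^2$. Since $g_3^2 \geq 0$ everywhere, and since $\S(f)=\S(g_1)$ forces $g_1$ to vanish wherever $g_3$ does (a point where $g_3=0$ lies in $\Z(f)$, hence not in $\S(f)=\S(g_1)$), this replacement does not change $\S(g_1)$, but it does ensure
$$\S(f)=\S(g_1) \quad \text{and} \quad \Z(f)\subset \Z(g_1).$$
Now set $S=\bar{\S}(f)$, a closed semi-algebraic subset of $X$, and apply Lemma \ref{hl4} to the pair $(g_1,g_2)\in \SR^0(X)\times \SR^0(X)$. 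This yields $p\in \Pol(X)$ with $p>0$ on $X$ and $h\in \SR^0(X)$ with $h\geq 0$ on $X$ such that
$$\Lambda(<pg_1+hg_2>)=\Lambda(<g_1>) \quad \text{on } S, \qquad \Z(h)=\overline{\Z(g_1)\cap S}^{\Co}.$$
Define $g=pg_1+hg_2\in \SR^0(X)$; I claim $\Lambda(<g>)=\Lambda(<f>)$ on all of $X$.

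For $x\in S$, we have $\Lambda(<g>)(x)=\Lambda(<g_1>)(x)$ by construction, and $\Lambda(<g_1>)(x)=\Lambda(<f>)(x)$ because $\S(f)=\S(g_1)$ and $\Z(f)\subset \Z(g_1)$ together determine the sign of $g_1$ and of $f$ to agree on $\bar{\S}(f)$ (on $\S(f)$ both are $>0$; on $\bar{\S}(f)\setminus \S(f)\subset \Z(f)\subset \Z(g_1)$ both vanish). For $x\notin S$, equivalently $f(x)<0$, the hypotheses $\S(-f)=\S(-g_2)$ and $\bar{\S}(f)=\bar{\S}(g_1)$ give $g_2(x)<0$ and $g_1(x)\leq 0$, so $pg_1\leq 0$ and $hg_2\leq 0$ at $x$.

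The crux of the argument is verifying the strict inequality $g(x)<0$ at such an $x$, which reduces to showing $h(x)>0$. Here is where the assumption that $\Z(f)=\Z(g_3)$ is a regulous closed set is used decisively: any point $y\in \Z(g_1)\cap S$ satisfies $g_1(y)=0$ (hence $y\notin \S(f)$) and $f(y)\geq 0$, forcing $f(y)=0$, i.e. $y\in \Z(f)$. Therefore
$$\Z(h)=\overline{\Z(g_1)\cap S}^{\Co}\subset \overline{\Z(f)}^{\Co}=\Z(f),$$
the last equality because $\Z(f)=\Z(g_3)$ is already regulous closed. Since $f(x)<0$ gives $x\notin \Z(f)$, we conclude $x\notin \Z(h)$, so $h(x)>0$ and $hg_2(x)<0$, whence $g(x)<0$. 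This finishes the verification and the proof.
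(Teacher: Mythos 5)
Your proof is correct and follows essentially the same route as the paper: replace $g_1$ by $g_1g_3^2$, apply Lemma \ref{hl4} on $S=\bar{\S}(f)$ to glue with $g_2$, and use the regulous closedness of $\Z(f)=\Z(g_3)$ to force $h>0$ off $\Z(f)$. The only cosmetic slip is invoking ``$\bar{\S}(f)=\bar{\S}(g_1)$'' (which need not hold); the needed inequality $g_1(x)\leq 0$ for $f(x)<0$ already follows from $\S(f)=\S(g_1)$.
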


\begin{proof}
Let
$S$ denote the set $\bar{\S}(f)$. The set $S$ is closed and
semi-algebraic since $f$ is respectively continuous and semi-algebraic.
Remark that $\S(f)=\S(g_1g_3^2)$ and $\Z(f)\subset \Z(g_1g_3^2)$ and
thus we get $\Lambda(<f>)=\Lambda(<g_1g_3^2>)$ on $S$. By Lemma \ref{hl4}, there exist $p\in\Pol (X)$ and $h\in\SR^0(X)$ such that $p>0$ on $X$, $h\geq 0$
on $X$, $\Lambda(<pg_1g_3^2+hg_2>)=\Lambda(<g_1g_3^2>)=\Lambda(<f>)$ on $S$ and $\Z
(h)=\overline{\Z(g_1g_3^2)\cap S}^{\Co}$. We denote by $g$ the
regulous function $pg_1g_3^2+hg_2$.
We are left to prove that $\Lambda(<g>)=\Lambda(<f>)$ on $\S(-f)$.
Let $x\not\in S$ i.e $f(x)<0$. Since $g_2(x)<0$ and $g_1(x)\leq
0$, it is sufficient to prove that $h(x)>0$. We have $S\cap \Z(g_1
g_3^2)\subset \Z(f)\cap  \Z(g_1 g_3^2)\subset \Z(f)=\Z(g_3)$ and thus 
$\Z(h)=\overline{\Z(g_1g_3^2)\cap S}^{\Co}\subset
\overline{\Z(g_3)}^{\Co}=\Z(g_3)=\Z(f)$. It follows that $h(x)>0$ and
the proof is done.
\end{proof}

\begin{prop}
\label{signratcont=signregu1}
Let $X\subset\R^n$ be a central real algebraic set and let $f\in\SR_0(X)$.
There exists $g\in\SR^0(X)$ such
that $\Lambda(<f>)=\Lambda(<g>)$ on $X$ if and only if $\Z(f)$ is
regulous closed and the
semi-algebraic sets $\S(f)\cap \pol(f)$ and $\S(-f)\cap \pol(f)$ are $\SR^0(\pol(f))$-principal.
\end{prop}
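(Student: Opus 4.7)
Proof proposal. The plan is to prove both implications directly, with the reverse direction reducing to Theorem \ref{signprincipaux} once we verify its three hypotheses.

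For the forward implication, suppose $g\in\SR^0(X)$ satisfies $\Lambda(<f>)=\Lambda(<g>)$ on $X$. Then $\Z(f)=\Z(g)$ is the zero set of a regulous function on $X$, hence regulous closed. Since $\pol(f)$ is a real algebraic subset of $X$, the restriction $g|_{\pol(f)}$ lies in $\SR^0(\pol(f))$. Therefore $\S(f)\cap\pol(f)=\S(g)\cap\pol(f)=\S(g|_{\pol(f)})$ is $\SR^0(\pol(f))$-principal, and similarly $\S(-f)\cap\pol(f)=\S(-g|_{\pol(f)})$ is $\SR^0(\pol(f))$-principal.

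For the reverse implication, note first that $f$ is semi-algebraic by Proposition \ref{s-a2} (since $X$ is central) and continuous by hypothesis, so Theorem \ref{signprincipaux} applies once its three hypotheses are established. Since $\Z(f)$ is regulous closed by assumption, there exists $g_3\in\SR^0(X)$ with $\Z(f)=\Z(g_3)$. It remains to show that $\S(f)$ and $\S(-f)$ are regulous principal open. We do this for $\S(f)$ via Proposition \ref{recolle} with $W=\pol(f)$; the case of $\S(-f)$ is symmetric. Writing $f=p/q$ on $\dom(f)$ with $p,q\in\Pol(X)$ and $\Z(q)=\pol(f)$, we have $\S(f)\setminus W=\S(pq)\setminus W$, giving condition (2). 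By hypothesis, there exists $h_1\in\SR^0(\pol(f))$ with $\S(f)\cap\pol(f)=\S(h_1)$; by the very definition of regulous functions on an algebraic subset (restrictions from regulous functions on $\R^n$), $h_1$ extends to some $\tilde h_1\in\SR^0(X)$, yielding $\S(f)\cap W=\S(\tilde h_1)\cap W$, which is condition (3). For condition (1), continuity of $f$ gives $\Bd(\S(f))\subset\Z(f)$, and since $\Z(f)$ is regulous closed we get $\overline{\Bd(\S(f))}^{\Co}\subset\overline{\Z(f)}^{\Co}=\Z(f)$, so $\S(f)\cap\overline{\Bd(\S(f))}^{\Co}\subset\S(f)\cap\Z(f)=\emptyset$. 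Thus $\S(f)=\S(g_1)$ for some $g_1\in\SR^0(X)$, and symmetrically $\S(-f)=\S(-g_2)$ for some $g_2\in\SR^0(X)$.

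The hard part is organizational rather than technical: one must correctly assemble Proposition \ref{recolle} with the right choice of $W$ and correctly interpret the ``extension'' of a regulous function from the algebraic subset $\pol(f)$ to the ambient set $X$. Once $g_1,g_2,g_3\in\SR^0(X)$ have been produced, invoking Theorem \ref{signprincipaux} directly yields the desired $g\in\SR^0(X)$ with $\Lambda(<f>)=\Lambda(<g>)$ on $X$.
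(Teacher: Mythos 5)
Your proposal is correct and follows essentially the same route as the paper: verify the three hypotheses of Proposition \ref{recolle} with $W=\pol(f)$ (using the regulous-closedness of $\Z(f)$ for condition (1), the rational presentation $p/q$ for condition (2), and the surjectivity of restriction $\SR^0(X)\to\SR^0(\pol(f))$ for condition (3)), then conclude via Theorem \ref{signprincipaux}, with semi-algebraicity of $f$ supplied by centrality of $X$. The forward implication is likewise handled by restricting $g$ to $\pol(f)$, just as the paper's ``clearly'' intends.
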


\begin{proof}
Let $0\not=f\in\SR_0(X)$, there exist $p,q\in\Pol(X)$ such that
$f=\dfrac{p}{q}$ on $X\setminus\pol(f)$ and $\Z(q)=\pol(f)$.

If there exists $g\in\SR^0(X)$ such
that $\Lambda(<f>)=\Lambda(<g>)$ on $X$ then clearly $\Z(f)$ is
regulous closed and the
semi-algebraic sets $\S(f)\cap \pol(f)$ and $\S(-f)\cap \pol(f)$ are $\SR^0(\pol(f))$-principal.

Assume for the rest of the proof that $\Z(f)$ is
regulous closed and the
semi-algebraic sets $\S(f)\cap \pol(f)$ and $\S(-f)\cap \pol(f)$ are
$\SR^0(\pol(f))$-principal. Since the restriction map
$\SR^0(X)\rightarrow\SR^0(\pol(f))$ is surjective there exist $g_1,g_2\in \SR^0(X)$
such that $\S(f)\cap\pol(f)=\S(g_1)\cap \pol(f)$ and
$\S(-f)\cap\pol(f)=\S(-g_2)\cap \pol(f)$.
By hypothesis, there exists $g_3\in \SR^0(X)$ such
that $\Z(g_3)=\Z(f)$. We have $\S(f)\cap
\overline{\Bd(\S(f))}^{\Co}\subset \S(f)\cap\overline{\Z(f)}^{\Co}=\S(f)\cap\overline{\Z(g_3)}^{\Co}
=\S(f)\cap\Z(g_3)=\S(f)\cap\Z(f)=\emptyset$. Since $\S(f)\setminus
\pol(f)=\S(pq)\setminus \pol(f)$, it follows from Proposition
\ref{recolle} that there exists $h_1\in \SR^0(X)$ such that
$\S(f)=\S(h_1)$. The same reasoning gives $h_2\in \SR^0(X)$ such that
$\S(-f)=\S(-h_2)$. Since $X$ is central then the function $f$ is
semi-algebraic. By Theorem \ref{signprincipaux} the proof is done.
\end{proof}

\begin{cor}
Let $X\subset\R^n$ be a central real algebraic set. Let $f\in\SR_0(X)$ such
that $\Z(f)$ is
regulous closed and $\dim (\pol(f))\leq 1$ (it is automatically the case if $\dim
X\leq 2$).
There exists $g\in\SR^0(X)$ such
that $\Lambda(<f>)=\Lambda(<g>)$ on $X$.
\end{cor}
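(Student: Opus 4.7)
The plan is to apply Proposition \ref{signratcont=signregu1} directly. That proposition reduces the existence of a regulous $g$ with $\Lambda(<f>)=\Lambda(<g>)$ to three conditions: (i) $\Z(f)$ is regulous closed, (ii) $\S(f)\cap\pol(f)$ is $\SR^0(\pol(f))$-principal, and (iii) $\S(-f)\cap\pol(f)$ is $\SR^0(\pol(f))$-principal. Condition (i) is exactly our hypothesis, so it only remains to verify (ii) and (iii).

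For (ii) and (iii), I would first observe that since $X$ is central, Proposition \ref{s-a2} guarantees that $f$ is semi-algebraic on $X$. Consequently, $f|_{\pol(f)}$ is a continuous semi-algebraic function on the real algebraic set $\pol(f)$, so $\S(f)\cap\pol(f)=\S(f|_{\pol(f)})$ and $\S(-f)\cap\pol(f)=\S(-f|_{\pol(f)})$ are open semi-algebraic subsets of $\pol(f)$. By the hypothesis $\dim(\pol(f))\le 1$, I can invoke \cite{Br} exactly as in the proof of Proposition \ref{signsadim1}: every open semi-algebraic subset of a real algebraic set of dimension at most one is principal in the polynomial sense. This produces $p_1,p_2\in\Pol(\pol(f))$ with $\S(f|_{\pol(f)})=\S(p_1)$ and $\S(-f|_{\pol(f)})=\S(-p_2)$. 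Since polynomial functions on $\pol(f)$ are regulous, these sets are in particular $\SR^0(\pol(f))$-principal, which gives (ii) and (iii).

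The parenthetical claim that the dimension hypothesis is automatic for $\dim X\le 2$ is immediate: by definition of the polar locus, $\dim(\pol(f))<\dim X$, so $\dim X\le 2$ forces $\dim(\pol(f))\le 1$ (and the case $\pol(f)=\emptyset$ is trivial because then $f$ is regular, hence already in $\SR^0(X)$).

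The main (indeed only) substantive step is the reduction of principality on $\pol(f)$ to the one-dimensional case; after that, the conclusion is a direct invocation of Proposition \ref{signratcont=signregu1}. There is no real obstacle here, since all the heavy lifting — the patching argument that upgrades principality on $\pol(f)$ together with the sign pattern off $\pol(f)$ to a global regulous representative — is already done inside Proposition \ref{signratcont=signregu1} via Proposition \ref{recolle} and Theorem \ref{signprincipaux}.
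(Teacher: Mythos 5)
Your proof is correct and follows essentially the same route as the paper: restrict $f$ to $\pol(f)$, note the restriction is continuous semi-algebraic, invoke Br\"ocker's result that open semi-algebraic subsets of a real algebraic set of dimension at most one are principal, and then apply Proposition \ref{signratcont=signregu1}. The only (harmless) addition is your explicit appeal to Proposition \ref{s-a2} for semi-algebraicity and your justification of the parenthetical dimension remark.
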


\begin{proof}
The restriction of $f$ to $\pol(f)$ is a continuous semi-algebraic
function. So the sets $\S(f)\cap \pol(f)$ and $\S(-f)\cap \pol(f)$ are
open semi-algebraic subsets of $\pol(f)$. Now since
$\dim (\pol(f))\leq 1$ then the sets $\S(f)\cap \pol(f)$ and $\S(-f)\cap
\pol(f)$ are principal by \cite{Br}.  By Proposition
\ref{signratcont=signregu1} the proof is complete.
\end{proof}

\begin{prop}
\label{signratcont=signregu2}
Let $X\subset\R^n$ be a central real algebraic set and let $f\in\SR_0(X)$.
There exists $g\in\SR^0(X)$ such
that $\Lambda(<f>)=\Lambda(<g>)$ on $X$ if and only if $\Z(f)$ is
regulous closed and for any algebraic
subset $V$ of $X$ the
semi-algebraic sets $\S(f)\cap V$ and $\S(-f)\cap V$ are generically principal.
\end{prop}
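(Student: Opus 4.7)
The plan is to reduce to Proposition \ref{signratcont=signregu1} by checking that, under the stated hypotheses, both $\S(f)\cap\pol(f)$ and $\S(-f)\cap\pol(f)$ are $\SR^0(\pol(f))$-principal, and then to derive this $\SR^0(\pol(f))$-principality from Theorem \ref{principal4} applied in the ambient space $\pol(f)$.

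For the forward implication, suppose $\Lambda(<f>)=\Lambda(<g>)$ on $X$ for some $g\in\SR^0(X)$. Then $\Z(f)=\Z(g)$ is the zero set of a regulous function, hence regulous closed. For any real algebraic subset $V$ of $X$, the restriction $g|_V$ lies in $\SR^0(V)$ and is regular on $V\setminus\pol(g|_V)$; writing $g|_V=p/q$ with $p,q\in\Pol(V)$ yields $\S(f)\cap V=\S(g|_V)=\S(pq)\cap V$ outside $\pol(g|_V)$, a Zariski closed subset of $V$ of positive codimension. Hence $\S(f)\cap V$ is generically principal, and the same argument applies to $\S(-f)\cap V$.

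For the backward implication, set $S:=\S(f)\cap\pol(f)$ and regard it as an open semi-algebraic subset of $\pol(f)$. I shall apply Theorem \ref{principal4} with ambient space $\pol(f)$. Condition 1) of that theorem requires $S\cap V=\S(f)\cap V$ to be generically principal in $V$ for every algebraic subset $V$ of $\pol(f)$, and this follows immediately from the hypothesis since any such $V$ is also an algebraic subset of $X$. Condition 2) requires $S\cap\overline{\Bd_{\pol(f)}(S)}^{\Co}=\emptyset$. Since $f$ is continuous, any point of $\Bd_{\pol(f)}(S)$ must satisfy $f=0$, so $\Bd_{\pol(f)}(S)\subset\Z(f)\cap\pol(f)$. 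As the regulous topology coincides with the $\Co$-topology, the regulous-closed set $\Z(f)$ is $\Co$-closed, and $\pol(f)$ being Zariski closed is likewise $\Co$-closed; thus $\Z(f)\cap\pol(f)$ is $\Co$-closed. Therefore $\overline{\Bd_{\pol(f)}(S)}^{\Co}\subset\Z(f)\cap\pol(f)$, which is disjoint from $S$. Theorem \ref{principal4} now yields that $S$ is $\SR^0(\pol(f))$-principal; the identical argument applied to $-f$ handles $\S(-f)\cap\pol(f)$. Proposition \ref{signratcont=signregu1} then concludes.

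The delicate point is the topological identification: the regulous-closed hypothesis on $\Z(f)$ must be converted into $\Co$-closedness in order to control the $\Co$-closure of the Euclidean boundary that appears in Theorem \ref{principal4}. This conversion is precisely the equality of the regulous and $\Co$-topologies recalled in the preliminaries, without which the boundary of $S$ inside $\pol(f)$ could a priori accumulate onto $S$ itself in the $\Co$-topology.
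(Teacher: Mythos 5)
Your proof is correct. The forward implication is the same routine verification as in the paper, and the backward implication rests on the same two ingredients the paper uses: Theorem \ref{principal4} together with the observation that $\Bd(\S(\pm f))\subset\Z(f)$ and that regulous closedness of $\Z(f)$ (equivalently $\Co$-closedness) therefore kills the intersection of $\S(\pm f)$ with the $\Co$-closure of its boundary. The only difference is organizational: the paper applies Theorem \ref{principal4} with ambient space $X$ to the full sets $\S(f)$ and $\S(-f)$, obtaining $g_1,g_2\in\SR^0(X)$ with $\S(f)=\S(g_1)$ and $\S(-f)=\S(-g_2)$, and then concludes directly with Theorem \ref{signprincipaux}; you instead apply Theorem \ref{principal4} one level down, with ambient space $\pol(f)$, to $\S(\pm f)\cap\pol(f)$, and then delegate the reassembly to Proposition \ref{signratcont=signregu1} (whose proof itself invokes Proposition \ref{recolle} and Theorem \ref{signprincipaux}). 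Both routes are legitimate; yours makes the dependence on Proposition \ref{signratcont=signregu1} explicit and confines the boundary analysis to $\pol(f)$, while the paper's is marginally more direct since it avoids re-verifying the hypotheses of Proposition \ref{signratcont=signregu1}. Your check of condition 1) of Theorem \ref{principal4} inside $\pol(f)$ (every algebraic subset of $\pol(f)$ is an algebraic subset of $X$, and $\S(f)\cap\pol(f)\cap V=\S(f)\cap V$ there) and of condition 2) via the $\Co$-closedness of $\Z(f)\cap\pol(f)$ are both sound.
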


\begin{proof}
By Theorem \ref{signprincipaux}, we only have to prove the ``if'' part.
Assume that $\Z(f)$ is
regulous closed and for any algebraic
subset $V$ of $X$ the
semi-algebraic sets $\S(f)\cap V$ and $\S(-f)\cap V$ are generically
principal. Since $\S(f)\cap
\overline{\Bd(\S(f))}^{\Co}=\emptyset$ and $\S(-f)\cap
\overline{\Bd(\S(-f))}^{\Co}=\emptyset$ (see the proof of Proposition
\ref{signratcont=signregu1}, it is a consequence of the hypothesis
that $\Z(f)$ is
regulous closed), it follows from Theorem
\ref{principal4} that there exist $g_1,g_2\in \SR^0(X)$
such that $\S(f)=\S(g_1)$ and
$\S(-f)=\S(-g_2)$. To end the proof use Theorem \ref{signprincipaux}.
\end{proof}

\end{document}